\documentclass[a4paper,11pt,reqno]{amsart}

\usepackage{anysize,color}
\usepackage[utf8]{inputenc}
\usepackage[dvipsnames]{xcolor}
\usepackage{float}
\usepackage{amsfonts,amssymb,amsmath}
\usepackage{amsthm}
\usepackage{url}
\usepackage{paralist}
\usepackage{tikz}
    \usetikzlibrary{decorations.markings,arrows,shapes.callouts,calc}
\usepackage{rotating}
\usepackage[mathscr]{euscript}
\usepackage{charter}
\usepackage{verbatim}
\usepackage{tikz-cd}
\usepackage{mathtools}
\usepackage{tabularx}
\IfFileExists{soul.sty}{\usepackage{soul}}{}
\usepackage[arrow,curve,matrix,tips,2cell]{xy}
  \SelectTips{eu}{10} \UseTips
  \UseAllTwocells
\IfFileExists{bbold.sty}{\usepackage{bbold}}{}
\usepackage{booktabs}

\newtheorem*{theorem*}{Theorem}
\newtheorem{theorem}{Theorem}[section]
\newtheorem{proposition}[theorem]{Proposition}
\newtheorem{lemma}[theorem]{Lemma}

\newtheorem{corollary}[theorem]{Corollary}
\newtheorem{conjecture}[theorem]{Conjecture}
\makeatletter\let\c@equation\c@theorem\makeatother

\theoremstyle{definition}
\newtheorem{definition}[theorem]{Definition}
\theoremstyle{remark}

\newcommand\R{\mathbb R}
\newcommand\C{\mathbb C}
\newcommand\Z{\mathbb Z}

\newcommand\Q{\mathbb Q}
\newcommand\FF{\mathbb F}
\newcommand\sym{\mathfrak S}
\newcommand\K{\mathscr{K}}
\newcommand\LL{\mathscr{L}}
\newcommand\pt{\mathrm{pt}}
\newcommand\Tv{\mathrm{Tv}}
\newcommand\E{\mathbb{E}}
\newcommand\CC{\mathscr{C}}
\newcommand\Eperp{(\E^G)^{\perp}}

\DeclareMathOperator{\vertex}{\mathrm{vert}}
\DeclareMathOperator{\conv}{\mathrm{conv}}
\DeclareMathOperator{\aff}{\mathrm{aff}}

\DeclareMathOperator{\pr}{\mathrm{pr}}
\DeclareMathOperator{\supp}{\mathrm{supp}}

\begin{document}
\dedicatory{Dedicated to the memory of  Vera and Milivoj Blagojevi\'c}
\title[The number of Tverberg partitions]{The number of Tverberg partitions  beyond prime powers}

\author[Blagojevi\'c]{Matija  Blagojevi\'{c}}
\thanks{The research by Matija  Blagojevi\'{c} leading to these results has
received funding from the Studienstiftung des deutschen Volkes.}
\address{Zuse Institut, 14195 Berlin, Germany}
\email{matib03@zedat.fu-berlin.de}

\author[Blagojevi\'c]{Pavle V. M. Blagojevi\'{c}}
\thanks{The research by Pavle V. M. Blagojevi\'{c} leading to these results has
received funding from the Serbian Ministry of Science, Technological development and Innovations. It was also supported by G\"unter Ziegler's group at the Freie Universit\"at Berlin ``Arbeitsgruppe Diskrete Geometrie und Topologische Kombinatorik''.}
\address{Inst. Math., Freie Universit\"at Berlin, Arnimallee 2, 14195 Berlin, Germany\hfill\break
	\mbox{\hspace{4mm}}Mat. Institut SANU, Knez Mihailova 36, 11001 Beograd, Serbia}
\email{blagojevic@math.fu-berlin.de}
%-------------------------------------------------------------------%
%-------------------------------------------------------------------%
\begin{abstract}
Sierksma's conjecture asserts that every affine map $a\colon\Delta_{(d+1)(r-1)}\longrightarrow\R^d$ has at least $((r-1)!)^d$ unordered Tverberg partitions into $r$ parts.
Until now, lower bounds growing with $d$ were known only when $r$ is a prime power.
As one of our main results we prove that for every integer $r\geq3$, every decomposition $r=kq+s$ into $q=p^\alpha$ a prime power, $k\geq1$ and $s\geq0$, and every affine map $a$, the number $\Tv(a)$ of unordered Tverberg partitions satisfies
\[
	\Tv(a)\ \geq\ \frac{q}{r!}\,\Big(\frac{q}{\alpha+1}\Big)^{\big\lceil\frac{(r-1-2s)(d+1)}{2}\big\rceil}.
\]

\end{abstract}

\maketitle

%----------------------------------------------------------------------------------------------
%----------------------------------------------------------------------------------------------
\bigskip
\section{Introduction and statement of the main results}
\label{sec : introduction}
\bigskip
%----------------------------------------------------------------------------------------------
%----------------------------------------------------------------------------------------------

The classical 1966 theorem of Tverberg \cite{Tverberg1966}, which is a generalisation of the even more classical 1921 theorem of Radon \cite{Radon1921}, has fostered some of the most intriguing research advancements on the crossroads of convex geometry and equivariant algebraic topology.
Since Tverberg's original work, the result has seen many new proofs and variations, as well as generalisations to the topological, colourful, tolerant, and even parametrised settings.
Many Tverberg type questions remain open, with limited progress and even without clear new approaches towards a complete solution, for example Sierksma's Dutch cheese problem \cite{Sierksma1979,Reay1982} and the B\'ar\'any--Larman conjecture \cite{Barany1992}.
Recent highlights in the area include the final solution of the topological Tverberg conjecture \cite{Blagojevic2019, Frick2015, Mabillard2014} and the partial solution of the B\'ar\'any--Larman conjecture \cite{Blagojevic2009}.
The history of the developments that emerged from Tverberg's theorem has been discussed in several surveys and popular articles with varying focuses.
A comprehensive and complete presentation of the results related to Tverberg's theorem would breach the scope of this paper, so for more insight into the history of the problem and the solution methods, we refer the reader, for example, to the publications \cite{Barany2016, BaranySoberon2018, BlagojevicZiegler2017, Matousek2008, Ziegler2011}.

\medskip
Tverberg's original result, phrased in the language of affine maps, states the following.

\medskip
\begin{theorem}
	\label{th : Tverberg's original}
	Let $r\geq 2$, $d\geq 1$ and $N:=(d+1)(r-1)$ be integers, $\Delta_N$ an $N$-dimensional simplex and let $a\colon \Delta_N\longrightarrow\R^d$ be an affine map.
	Then there are $r$ pairwise disjoint faces $F_1,\dots, F_r$ of the simplex $\Delta_N$ such that $a(F_1)\cap\cdots\cap a(F_r)\neq\emptyset$.
\end{theorem}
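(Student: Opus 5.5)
We prove Tverberg's theorem directly by Sarkaria's tensor trick combined with the colourful Carathéodory theorem of Bárány. Let $v_0,\dots,v_N$ be the vertices of $\Delta_N$ and $x_i:=a(v_i)\in\R^d$. Affine maps are determined by vertex images, and a point $y\in a(F_j)$ is a convex combination of the $x_i$ with $v_i\in F_j$. So it suffices to find a partition of $\{0,\dots,N\}$ into $r$ blocks $I_1,\dots,I_r$ whose convex hulls $\conv\{x_i: i\in I_j\}$ share a point. Taking $F_j=\conv\{v_i:i\in I_j\}$, these faces are pairwise disjoint and $a(F_1)\cap\dots\cap a(F_r)\neq\emptyset$.

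First lift: put $\hat x_i:=(x_i,1)\in\R^{d+1}$. Next choose vectors $w_1,\dots,w_r\in\R^{r-1}$ with $w_1+\dots+w_r=0$ as their only linear dependence up to scaling, for instance the vertices of a regular simplex centred at the origin. For each $i$ form the colour class
\[
C_i:=\{\hat x_i\otimes w_j : 1\le j\le r\}\subset \R^{d+1}\otimes\R^{r-1}\cong\R^{N}.
\]
Since $\sum_j \hat x_i\otimes w_j=0$, the origin lies in $\conv C_i$ for every $i$. There are $N+1$ colour classes in $\R^N$. Bárány's colourful Carathéodory theorem therefore yields a choice $j(i)$ for each $i$ and coefficients $\lambda_i\ge0$ with $\sum_i\lambda_i=1$ and $\sum_i\lambda_i\,\hat x_i\otimes w_{j(i)}=0$. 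I would either quote this theorem as standard or include its short proof: take a colourful simplex minimising the distance to $0$, and if the distance is positive, swap one point using a separating hyperplane to strictly decrease it.

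Decode the resulting relation. Set $I_j:=\{i: j(i)=j\}$ and $u_j:=\sum_{i\in I_j}\lambda_i\hat x_i\in\R^{d+1}$. Then $\sum_j u_j\otimes w_j=0$. Apply any linear functional $\phi$ on $\R^{d+1}$ to get $\sum_j\phi(u_j)w_j=0$. By the choice of the $w_j$, the scalars $\phi(u_j)$ are all equal, so $\phi(u_j-u_1)=0$ for all $\phi$, and hence $u_1=\dots=u_r=:u$. The last coordinate of $u$ is $\sum_{i\in I_j}\lambda_i=:\mu$, the same for every $j$. Summing over $j$ gives $r\mu=1$, so $\mu=1/r>0$. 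In particular no block is empty, and $\frac1\mu u$ projected to $\R^d$ is a convex combination of $\{x_i:i\in I_j\}$ for each $j$. This is the common point.

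The main step that needs care is the colourful Carathéodory input and the dimension count: $\dim \R^{d+1}\otimes\R^{r-1}=(d+1)(r-1)=N$, with exactly $N+1$ classes. The only subtle verification is the uniqueness of the linear dependence of the $w_j$. This holds because any $r-1$ of the $w_j$ form a basis of $\R^{r-1}$, which makes the decoding step valid.
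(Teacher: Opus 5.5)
Your proof is correct. It is the Sarkaria--B\'ar\'any--Onn argument, and its first half coincides with the paper's set-up. With $w_j=\varepsilon_j\in W_r$ and up to the order of coordinates and tensor factors, your vectors $\hat x_i\otimes w_j$ are the test-map images $A(p_{i,j})=\varepsilon_j\otimes\widehat a(p_i)$ of Lemma~\ref{lem : vertices of A}. Your colour classes are the sets $A(Q_i)$, your observation $0\in\conv C_i$ is $0\in\conv(A(Q_i))$, and your decoding step is Lemma~\ref{lem : CSTM}.

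The two proofs part ways at the decisive step: producing a face $J$ of $\K$ (a partial transversal of the columns) with $0\in\conv(A(J))$. You get it in one stroke from the colourful Carath\'eodory theorem, using only that there are $N+1$ colour classes in the $N$-dimensional space $\E$. The paper instead argues topologically by contradiction. If $\K$ had no Tverberg face, then $\K\subseteq\CC_A$, and by~\eqref{eq : covering scheme} the complex $\CC_A^**\K$ is covered by the $|V|$ complexes $\CC_A^*[V-\{v\}]*\K[V-\{v\}]$, whose nerve is the boundary of the simplex on $V$. Three ingredients then give $\widetilde H_{|V|-2}(\CC_A^**\K;\FF)\cong\FF$: the connectivity estimates for $\CC_A^*[U]$ quoted in Section~\ref{subsec : covering scheme} (resting on Alexander duality for the zero-avoiding complex), the connectivity of the joins $\K[U]$ of non-empty discrete sets, and Meshulam's nerve theorem. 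This contradicts the high connectivity of $\CC_A^**\K$ itself.

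Your route is shorter and self-contained, relying only on convexity rather than on imported connectivity lemmas. The paper's route is deliberately heavier because it illustrates a mechanism in which $\K$ may be replaced by any sub-complex $\LL\subseteq\K$, with only the connectivity of $\LL$ and of the $\LL[U_{a,y}]$ entering. That flexibility leads to the incidence space and the $G$-box criterion, Theorem~\ref{th : fixed sphere}, and hence to constrained results such as Theorem~\ref{th : difference}. There a Tverberg face must be found inside a prescribed sub-complex, which colourful Carath\'eodory cannot provide, since it gives no control over which transversal it produces.
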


\medskip
For an affine map $a\colon \Delta_N\longrightarrow\R^d$ a collection of pairwise disjoint faces $(F_1,\dots, F_r)$ of the simplex $\Delta_N$ with the property that $a(F_1)\cap\cdots\cap a(F_r)\neq\emptyset$ is called an (ordered) {\em Tverberg partition} of the map $a$.
By permuting the collection $(F_1,\dots, F_r)$, we obtain in total $r!$ Tverberg partitions with the same faces. Therefore we call the set $\{F_1,\dots, F_r\}$ an {\em unordered Tverberg partition}.
For the number of unordered Tverberg partitions of the affine map $a$ we write $\Tv(a)$.
We do not require that the faces $F_1,\dots,F_r$ cover the vertex set of $\Delta_N$; for affine maps in general position this holds automatically, as explained below.

\medskip
An affine map $a\colon \Delta_N\longrightarrow\R^d$ is determined by the $N+1$ points $a(v)$, for $v\in\vertex(\Delta_N)$, and $a(F)=\conv (a(\vertex (F)))$ for every face $F$ of $\Delta_N$.
Theorem~\ref{th : Tverberg's original} is actually a reformulation of Tverberg's original statement in the language of affine maps, which claimed that $N+1$ points $a(v)$ can be split into $r$ groups, indexed by pairwise disjoint sets of vertices, whose convex hulls have a common point.
Throughout the paper we use the language of affine maps.

\medskip
In this paper we focus on Sierksma's Dutch cheese problem \cite{Sierksma1979}, which conjectures a lower bound for the number of unordered Tverberg partitions of an arbitrary affine map.
More precisely, in his notes from 1979 Sierksma considered the following affine map $a\colon\Delta_N\longrightarrow\R^d$, the so-called $(d+1)$-clouds configuration: split the $N+1=(d+1)(r-1)+1$ vertices of $\Delta_N$ into $d+1$ groups of $r-1$ vertices and one further vertex, map the vertices of the $i$-th group to points close to the $i$-th vertex of a $d$-dimensional simplex in $\R^d$, and map the remaining vertex to a point in the interior of this simplex; for an illustration see Figure~\ref{fig : clouds}.
This map has exactly $((r-1)!)^d$ unordered Tverberg partitions, and Sierksma conjectured that it is extremal; consult~\cite{Sierksma1979}, \cite{Reay1982} and~\cite[Sec.\,4.1]{BaranySoberon2018}.

\begin{figure}[ht]
\centering
\begin{tikzpicture}[scale=1.0,
	pt/.style={circle,fill=black,inner sep=1.4pt},
	ctr/.style={circle,draw,line width=1.1pt,inner sep=2.4pt},
	lab/.style={font=\scriptsize}]
	\coordinate (v1) at (0,0); \coordinate (v2) at (5,0); \coordinate (v3) at (2.5,4.2);
	\coordinate (c) at (2.5,1.4);
	\draw[black!30] (v1) -- (v2) -- (v3) -- cycle;
	\foreach \v/\a/\b in {v1/0.22/0.32, v2/-0.3/0.28, v3/0.05/-0.4}{
		\fill[black!10] (\v) circle (0.42);}
	\coordinate (a1) at (0.22,0.32); \coordinate (a2) at (0.08,-0.2);
	\coordinate (b1) at (4.72,0.28); \coordinate (b2) at (5.12,-0.22);
	\coordinate (c1) at (2.55,3.8); \coordinate (c2) at (2.3,4.45);
	\draw[line width=1.1pt] (a1) -- (b2) -- (c2) -- cycle;
	\draw[line width=1.1pt,black!55,dash pattern=on 2.5pt off 1.8pt] (a2) -- (b1) -- (c1) -- cycle;
	\foreach \q in {a1,a2,b1,b2,c1,c2}{\node[pt] at (\q) {};}
	\node[pt] at (c) {}; \node[ctr] at (c) {};
	\node[lab,anchor=east] at (-0.5,0.05) {$C_1$};
	\node[lab,anchor=west] at (5.5,0.05) {$C_2$};
	\node[lab,anchor=south] at (2.5,4.75) {$C_3$};
	\node[lab,anchor=north] at (2.5,1.15) {$c$};
\end{tikzpicture}
\caption{\small The $(d+1)$-clouds configuration for $d=2$ and $r=3$: three clouds $C_1,C_2,C_3$ of $r-1=2$ points near the vertices of a triangle, and the barycentre $c$, in all $N+1=7$ points.
Its unordered Tverberg partitions consist of the singleton $\{c\}$ and two triangles, drawn as a solid and a dashed triangle; both contain $c$.
There are $(2!)^2=4$ such partitions, one for each way of matching the points of $C_2$ and of $C_3$ with the two points of $C_1$.}
\label{fig : clouds}
\end{figure}

\medskip
\begin{conjecture}[Sierksma's conjecture]
	\label{conj : Sierksma}
	Let $r\geq 2$, $d\geq 1$ and $N:=(d+1)(r-1)$ be integers, and let $a\colon \Delta_N\longrightarrow\R^d$ be an affine map.
	Then 
	\[
		\Tv(a)\ \geq\ ((r-1)!)^d .
	\]
\end{conjecture}

\medskip
For integers $r\geq2$ and $d\geq1$ let
\[
	\Tv_r(d)\ :=\ \min\big\{\Tv(a)\ :\ a\colon\Delta_{(d+1)(r-1)}\longrightarrow\R^d\ \text{an affine map}\big\}
\]
denote the smallest number of unordered Tverberg partitions of an affine map in dimension $d$.
This minimum exists since $\Tv(a)$ is a positive integer for any affine map $a$ by Theorem~\ref{th : Tverberg's original}.
The $(d+1)$-clouds configuration shows that $\Tv_r(d)\leq((r-1)!)^d$, so Sierksma's conjecture actually states that $\Tv_r(d)=((r-1)!)^d$.

\medskip
If true, the conjectured bound is attained by many affine maps, as demonstrated by White~\cite{White2017}.
Lower bounds for $\Tv(a)$ are known only in a few situations.
The first one is due to Vu\v{c}i\'c and \v{Z}ivaljevi\'c~\cite{VucicZivaljevic1993} for primes, and it was extended to prime powers by Hell~\cite{Hell2007}.
Both results hold also for continuous maps $\Delta_N\longrightarrow\R^d$.
Here and throughout the paper a {\em prime power} is an integer $p^\alpha$ with $p$ a prime and $\alpha\geq1$.

\medskip
\begin{theorem}
	\label{th : VZ Hell}
	Let $r\geq 2$, $d\geq 1$ and $N:=(d+1)(r-1)$ be integers, and let $a\colon \Delta_N\longrightarrow\R^d$ be an affine map.
	If $r=p^\alpha$ is a prime power, then
	\[
		\Tv(a)\ \geq\ \frac{1}{(r-1)!}\Big(\frac{r}{\alpha+1}\Big)^{\lceil \frac{N}2\rceil}.
	\]
\end{theorem}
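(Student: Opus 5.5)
We follow the equivariant-topological counting argument of Vu\v{c}i\'c--\v{Z}ivaljevi\'c and Hell, which works for continuous maps and is in no way specific to affine ones. Write $r=p^\alpha$ and let $G=(\Z/p)^\alpha$ act freely and transitively on $[r]$; this makes $G$ a subgroup of $\sym_r$. We use the configuration space of ordered $r$-tuples of pairwise disjoint faces, namely the $r$-fold $2$-wise deleted join $(\Delta_N)^{*r}_{\Delta(2)}$. It is isomorphic to the join $[r]^{*(N+1)}$, hence is an $N$-dimensional, $(N-1)$-connected simplicial complex with a free $G$-action. From $a$ we build the $G$-equivariant test map
\[
F\colon (\Delta_N)^{*r}_{\Delta(2)}\longrightarrow (\R^{d+1})^{r},\qquad \lambda_1x_1+\dots+\lambda_rx_r\longmapsto (\lambda_i,\lambda_i a(x_i))_{i=1}^r .
\]
The ordered Tverberg partitions are exactly the faces $F_1\times\dots\times F_r$ (joins of the $F_i$) that contain a point mapped to the diagonal $D$. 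After projecting onto the orthogonal complement $W_r^{\oplus(d+1)}$ of $D$, whose dimension is $N$, these become the zeros of the projected map.

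\emph{Step 1: reduction to generic position.} By a small perturbation we assume $a$ is in general position, so the zero set $Z$ is finite, avoids the codimension-$\ge1$ skeleta, and the projected map is transverse to $0$. Each zero then lies in the relative interior of an $N$-cell, and these cells correspond bijectively to ordered Tverberg partitions. The count does not increase under the limit, since each Tverberg partition of the limit map comes from nearby ones. So a lower bound for generic maps yields the bound for all $a$.

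\emph{Step 2: a lower bound on the number of $G$-orbits of zeros.} The zero set $Z$ is a free $G$-set, so it suffices to bound $|Z|/|G|$. We follow Hell. The ordered partitions come in $\sym_r$-orbits of size $r!$, so $\Tv(a)=|Z|/r!$, and it is enough to show $|Z|\ge r\,\bigl(r/(\alpha+1)\bigr)^{\lceil N/2\rceil}$. The key idea, from Vu\v{c}i\'c--\v{Z}ivaljevi\'c, is to consider the subcomplex $K\subseteq(\Delta_N)^{*r}_{\Delta(2)}$ consisting of the closed $N$-cells that contain zeros, together with their faces. Nonvanishing of the relevant $G$-equivariant obstruction (Euler class) shows that $K$ carries nontrivial top-dimensional information. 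One then proves that $K$ must be ``large'' by repeatedly applying an index or connectivity argument to the links. Each vertex of the join $[r]^{*(N+1)}$ has $r$ choices. A cell of $K$ together with the coboundary structure forces, in every two consecutive coordinates, at least $r/(\alpha+1)$ essentially distinct extensions. The factor $\alpha+1$ comes from the cohomological length of $(\Z/p)^\alpha$, as in Hell's estimate via the Fadell--Husseini index. Concretely, one shows inductively over pairs of join coordinates that the number of cells of $K$ grows by a factor of at least $r/(\alpha+1)$. This yields $|Z|\ge r\cdot(r/(\alpha+1))^{\lceil N/2\rceil}$, where the leading factor $r$ is the size of a free $G$-orbit in one coordinate.

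\emph{Step 3: conversion.} Dividing by $r!$ gives
\[
\Tv(a)\ \ge\ \frac{r}{r!}\Big(\frac{r}{\alpha+1}\Big)^{\lceil N/2\rceil}=\frac1{(r-1)!}\Big(\frac{r}{\alpha+1}\Big)^{\lceil N/2\rceil}.
\]

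The main obstacle is the inductive counting in Step 2: one has to turn the nonvanishing of the equivariant obstruction into a combinatorial growth of the set of cells containing zeros. Our plan is to restrict the test map to subcomplexes of the form $\sigma * [r]^{*(N+1-2j)}$ with $\sigma$ fixed on $2j$ coordinates. By the Borsuk--Ulam-type theorem for $(\Z/p)^\alpha$, namely the nonexistence of equivariant maps from highly connected spaces to spheres, each such restriction still must have zeros whenever the dimensions match. Counting how many $\sigma$ can fail on each pair of coordinates, we expect at most a $(1-1/(\alpha+1))$ fraction of the extensions to be lost.
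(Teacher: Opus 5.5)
Your Steps~1 and~3 are routine. (In Step~1 the direction you need is that Tverberg partitions of general-position maps $b_t\to a$ persist in the limit, not that partitions of $a$ come from nearby ones.) Step~2, however, carries the entire content of the theorem and is not an argument, and the mechanism you propose for it fails.

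\emph{Why the proposed restriction does not work.} A subcomplex $\sigma*[r]^{*(N+1-2j)}$ with a fixed labelling $\sigma$ of $2j$ vertices is not $G$-invariant, because $G$ acts freely on the labels and so moves every $\sigma$. It is also a cone, so no Borsuk--Ulam type obstruction applies to it. Its $G$-saturation $G\sigma*[r]^{*(N+1-2j)}$ is invariant. But $G\sigma$ is a disjoint union of $q=r$ simplices, so this join is only homologically $(N-2j)$-connected, short of the required $(N-1)$ for $j\geq1$.

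The claim that each such restriction must have zeros is in fact false. Take $d=1$ and $a$ injective on vertices. By the example after Theorem~\ref{th : difference}, the two vertices mapped to the two smallest points lie in different faces of every Tverberg partition. So if $\sigma$ gives them the same label, neither $\sigma*[r]^{*(N-1)}$ nor its $G$-saturation contains a Tverberg face. Consequently your hoped-for loss of at most a $(1-\frac1{\alpha+1})$ fraction per pair of coordinates has no basis. Also, $\alpha+1$ is not a cohomological length of $(\Z/p)^\alpha$. It is the size of $\Gamma=\{0,e_1,\dots,e_\alpha\}$, a smallest set whose differences generate $\FF_p^{\,\alpha}$.

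\emph{The missing idea.} You need a family of $G$-\emph{invariant}, $(N-1)$-connected subcomplexes, combined with a double count. This is the argument of Vu\v{c}i\'c--\v{Z}ivaljevi\'c and Hell, and the paper's proof of Theorem~\ref{th : main} specialises to it for $k=1$, $s=0$.

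Fix an oriented matching $M$ with $\lfloor\frac{N+1}2\rfloor=\lceil\frac N2\rceil$ pairs. For $h\in(\FF_p^{\,\alpha})^M$, let $\LL_h$ be the join of the following pieces: for each $(p_j,p_{j'})\in M$, the bipartite graph on $Q_j\sqcup Q_{j'}$ joining the label $u$ to the labels $u+\gamma+h_{p_j,p_{j'}}$, $\gamma\in\Gamma$; and, when $N$ is even, the column of the unmatched vertex.

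\begin{itemize}
\item Each graph is connected because the differences of $\Gamma$ generate $\FF_p^{\,\alpha}$. So $\LL_h$ is top-concentrated of dimension $N$, and it is invariant under translation of labels.
\item By the one-orbit case of Theorem~\ref{th : fixed sphere} (\"{O}zaydin--Volovikov), $\LL_h$ contains a Tverberg face, hence a free $G$-orbit of $q$ of them.
\item Conversely, for $a$ in general position a Tverberg face is a facet. It lies in exactly $(\alpha+1)^{|M|}$ of the $q^{|M|}$ complexes, because for each pair $h_{v,v'}$ must lie in $(u'-u)-\Gamma$.
\end{itemize}

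Counting pairs $(h,J)$ gives $q\cdot q^{|M|}\leq r!\,\Tv(a)\,(\alpha+1)^{|M|}$, which is the claimed bound. The factor $\frac{r}{\alpha+1}$ per pair comes from averaging over translations, not from an induction over coordinates.
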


\medskip
Note that the bound of Vu\v{c}i\'c--\v{Z}ivaljevi\'c and Hell, for a fixed prime power $r$, grows exponentially in $d$ with rate $(\frac{r}{\alpha+1})^{(r-1)/2}$, while the conjectured bound grows with rate $(r-1)!$.
Since $\log\big((\frac{r}{\alpha+1})^{(r-1)/2}\big)=(\tfrac12+o(1))\log\,(r-1)!$ as $r\to\infty$, for large $r$ the bound of Vu\v{c}i\'c--\v{Z}ivaljevi\'c and Hell is, on the exponential scale, roughly the square root of the conjectured bound.
Indeed, $\log\,(r-1)!=(r-1)\log(r-1)-(r-1)+O(\log r)=(1+o(1))\,r\log r$ by Stirling's formula, while $\log\big((\frac{r}{\alpha+1})^{(r-1)/2}\big)=\frac{r-1}{2}\big(\log r-\log(\alpha+1)\big)=(1+o(1))\,\frac r2\log r$, because $r=p^\alpha\geq2^\alpha$ gives $\alpha\leq\log_2r$ and hence $\log(\alpha+1)=O(\log\log r)$.

\medskip
Some of the results we recall below assume that the affine map is in general position.
Since an affine map $a\colon\Delta_N\longrightarrow\R^d$ is determined by the images of the $N+1$ vertices of $\Delta_N$, the space of all affine maps $\Delta_N\longrightarrow\R^d$ is isomorphic to $(\R^d)^{N+1}$.
We say that an affine map $a\colon\Delta_N\longrightarrow\R^d$ is in {\em general position} if
\begin{compactitem}[\quad --]
	\item every set of at most $d+1$ vertices of $\Delta_N$ is mapped to affinely independent points, and
	\item for all pairwise disjoint nonempty subsets $S_1,\dots,S_r$ of the vertices of $\Delta_N$ with
	\[
	|S_1|\leq d+1, \dots, |S_r|\leq d+1
	\qquad\text{and}\qquad
	\sum_{j=1}^r(d+1-|S_j|)>d
	\]
	we have
	\[ \aff (a(S_1))\cap\dots\cap\aff (a(S_r))=\emptyset.\]
\end{compactitem}
A dimension count shows that the maps which are not in general position are contained in a proper algebraic subset of $(\R^d)^{N+1}$: the first condition fails on the zero set of finitely many determinants, and for fixed sets $S_1,\dots,S_r$ as in the second condition the affine subspaces $\aff(a(S_1)),\dots,\aff(a(S_r))$, whose codimensions add up to more than $d$, have a common point only if a system of more than $d$ linear equations in $d$ unknowns, whose coefficients are polynomials in $a$, is solvable; this forces the augmented matrix of the system, which has $d+1$ columns, to have rank at most $d$, a closed algebraic condition which fails for a generic $a$.
Hence the maps in general position are dense.
Other papers use other, sometimes stronger, conditions; for example, in~\cite{Hell2008constraints} all coordinates of the points $a(v)$ are required to be independent over $\Q$.

\medskip
For affine maps in general position every Tverberg partition $(F_1,\dots,F_r)$ uses all vertices of $\Delta_N$, and each face has at most $d+1$ vertices.
Indeed, let $m_j$ be the number of vertices of $F_j$.
Applying the second condition to the vertex sets of the faces, where a face with more than $d+1$ vertices is replaced by $d+1$ of its vertices, gives $\sum_j\max\{0,d+1-m_j\}\leq d$, while $\sum_j(d+1-m_j)\geq r(d+1)-(N+1)=d$.
Hence $\sum_jm_j=N+1$ and $m_j\leq d+1$ for all $j$.
So in general position a Tverberg partition is a partition of the vertex set of $\Delta_N$ into $r$ faces.
Moreover, the requirement of being in general position does not affect lower bounds on $\Tv(a)$.
Indeed, suppose $\Tv(b)\geq m$ for all affine maps $b$ in a dense subset of $(\R^d)^{N+1}$, let $a$ be arbitrary, and choose maps $b_t\to a$ in that subset.
Since $\Delta_N$ has only finitely many collections of faces, the same $m$ unordered collections are Tverberg partitions of $b_t$ for infinitely many $t$, and they remain Tverberg partitions of $a$, because the condition $b_t(F_1)\cap\cdots\cap b_t(F_r)\neq\emptyset$ passes to the limit $a$, and so $\Tv(a)\geq m$.
%All results of this paper are stated and proved for arbitrary affine maps.

\medskip
Further results on Sierksma's conjecture include the following.
\begin{compactitem}[\quad --]
	\item Sierksma's conjecture holds for $r=3$ and $d=2$, that is, $\Tv(a)\geq4$ for every affine map $a\colon\Delta_6\longrightarrow\R^2$. This was first established by Hell~\cite[Thm.\,6]{Hell2008constraints}, see also~\cite[Rem.\,3.19]{Hell2006}, and recently an elementary proof was given by Sober\'on~\cite{Soberon2026}. As far as we know, this is the only non-trivial case of the conjecture which has been confirmed.
	\item For $r$ a prime power, $r>2$, $d\geq2$ and affine maps in general position, Hell~\cite[Thm.\,5]{Hell2008constraints} proved the lower bound $\min\{(r-1)!,\,c_{d,r}(r-d)!\}$ with an integer constant $c_{d,r}\geq2$.
	\item For an arbitrary $r$ and affine maps in general position, Hell~\cite{Hell2008Birch} proved, in his work on the number of Birch partitions, that $\Tv(a)\geq(r-d)!$. This is the first and, as far as we know, the only lower bound valid for all $r$; see~\cite[Sec.\,4.1]{BaranySoberon2018}. In particular, when $r$ is not a prime power and $d\geq r-1$, no lower bound for $\Tv(a)$ is known which is better than the trivial bound $\Tv(a)\geq1$.
\end{compactitem}

\medskip
Consequently, beyond prime powers, very little is known about the number of unordered Tverberg partitions.
Moreover, the counterexamples to the topological Tverberg conjecture~\cite{Blagojevic2019, Frick2015, Mabillard2014, MabillardWagner-III, AvvakumovKarasevSkopenkov2023} show that for $r$ not a prime power and $d$ large enough there are continuous maps $\Delta_N\longrightarrow\R^d$ without a single Tverberg partition.
Hence any lower bound for $\Tv(a)$ beyond prime powers has to use the affine structure of the map $a$ in an essential way.

\medskip
The central result of this paper is the following lower bound for the number of unordered Tverberg partitions, which holds for every number of parts $r$.

\medskip
\begin{theorem}
	\label{th : main}
	Let $r\geq3$, $d\geq1$ be integers, $N:=(d+1)(r-1)$ and let $a\colon\Delta_N\longrightarrow\R^d$ be an affine map.
	If $r=k\,q+s$, where $q=p^{\alpha}$ is a prime power with $k\geq1$ and $s\geq0$, then 
	\[
		\Tv(a)\ \geq\ \frac{q}{r!}\,\Big(\frac{q}{\alpha+1}\Big)^{\big\lceil\frac{(r-1-2s)(d+1)}{2}\big\rceil}.
	\]
\end{theorem}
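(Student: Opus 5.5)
We reduce to a count of Tverberg partitions into $q$ parts of a suitable affine map in lower dimension, and then apply the Vu\v{c}i\'c--\v{Z}ivaljevi\'c--Hell count (Theorem~\ref{th : VZ Hell}) in an equivariant, relative form. By the density argument given above, we may assume that $a$ is in general position. The affine structure enters through the classical Sarkaria / constraint method. Embed $\R^d$ as $\R^d\times\{1\}\subset\R^{d+1}$ and consider the tensor trick: a Tverberg partition of $a$ into $r$ parts corresponds to a point of the join $(\Delta_N)^{*r}_{\Delta}$ mapped to the diagonal. Rather than working with the full $\sym_r$-symmetry, which is unavailable topologically for non-prime-powers, we fix the $s$ ``extra'' parts in advance. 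The plan is to choose $s$ parts so that the remaining $kq$ parts can be organized as $k$ blocks of $q$ parts, on which the group $(\Z/p)^\alpha$ acts diagonally and freely.

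Concretely, the steps are as follows.
\begin{enumerate}
\item We use a Birch/Sarkaria-type linearization. A point $x$ lies in $a(F_1)\cap\dots\cap a(F_r)$ if and only if a certain linear combination vanishes in $\R^{d+1}\otimes W_r$, where $W_r=\{y\in\R^r:\sum y_i=0\}$. We restrict $W_r$ to the subgroup $G=(\Z/p)^\alpha$, acting on $r=kq+s$ points with $k$ free orbits and $s$ fixed points. The fixed points force the $s$ parts to be degenerate in the $G$-equivariant count, which costs the $2s(d+1)/2$ in the exponent. This gives a $G$-equivariant map from a chessboard-type complex of dimension $N$ to a representation of real dimension $(d+1)(r-1)$, with fixed-point-free part of dimension $(d+1)(r-1-s)$ or similar.
\item We apply Hell's counting argument. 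For an elementary abelian group acting freely, the number of $G$-orbits of zeros is bounded below via the Euler characteristic/cohomological index computation. Each orbit has size $q$, and the estimate $(q/(\alpha+1))^{\lceil D/2\rceil}$ holds, where $D$ is the dimension of the relevant effective part. Here $D=(r-1-2s)(d+1)$, the loss of $2s$ coming from the $s$ fixed coordinates in $W_r$ and their matching constraints.
\item We convert ordered zeros into unordered partitions. The zeros counted give at least $q\,(q/(\alpha+1))^{\lceil D/2\rceil}$ ordered Tverberg partitions, since each $G$-orbit contributes $q$ of them. Each unordered partition is counted at most $r!$ times, which yields exactly the factor $\frac{q}{r!}$.
\end{enumerate}

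The main obstacle is step 1. We must show that, for an affine map, the test map restricted to $G=(\Z/p)^\alpha\subset\sym_r$ acting with $s$ fixed points still has a zero set whose size is controlled by a free part of dimension $(r-1-2s)(d+1)$. The first attempt will be to use affinity to fix the $s$ points' parts by a constraint argument: extend $a$ to a larger simplex with additional vertices placed so that any Tverberg partition of the extension restricts to a genuine one. The idea is that a linear projection $\R^{d+1}\otimes W_r\to\R^{d+1}\otimes W_{kq}$ kills the fixed summand $(\R^{d+1})^{s}$, and a second copy is absorbed by the dimension deficit.

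The remaining computation is the Hell-type degree bound, applied after an induction on the number of cells. That argument counts zeros with multiplicity using the cohomology $H^*(B(\Z/p)^\alpha;\FF_p)$, where each nontrivial character contributes a factor and at most $\alpha+1$ of them may be linearly dependent. This step is routine once step 1 is in place.
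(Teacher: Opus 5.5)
\textbf{There is a genuine gap, at exactly the point you flag as ``the main obstacle'', and the fixes you suggest do not close it.}

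For $G=(\Z/p)^\alpha$ acting on $\Lambda$ by translating the $\FF_p^{\,\alpha}$-labels, the fixed subspace $\E^G$ has dimension $(d+1)(k+s-1)$. This is positive in every new case $k\geq2$ or $s\geq1$. So the difficulty is not caused only by the $s$ fixed labels. For $s=0$, $k\geq2$ the group acts freely on the labels, contrary to what your plan relies on, but it still has fixed points on $S(\E)$. Hence there are $G$-maps from any $G$-complex into $S(\E)$, for example a constant map to a fixed point. No Borsuk--Ulam or \"Ozaydin--Volovikov type argument that only sees the equivariant test map can force a zero. By the counterexamples to the topological Tverberg conjecture, the affine structure must enter essentially.

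Your two fixes do not supply this. Projecting $\R^{d+1}\otimes W_r\to\R^{d+1}\otimes W_{kq}$ fails because a zero of the projected map is not a Tverberg partition, and ``adding vertices'' is not an argument. The paper provides the missing idea:
\begin{itemize}
\item the incidence space $T_{a,\LL}$, which records the affine map through the sets $U_{a,y}$;
\item an equivariant Poincar\'e dual $\Theta$ of a fixed point, whose restriction to the fixed sphere is $e_G(\Eperp)\times[S(\E^G)]^*\neq0$;
\item the resulting $G$-box criterion: a $G$-invariant, homologically $(N-1)$-connected $\LL\subseteq\K$, all of whose induced subcomplexes $\LL[U]$ on $G$-boxes are homologically $(N-2)$-connected, contains a Tverberg face.
\end{itemize}

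\textbf{Your Step 2 misdescribes where the count comes from.} An equivariant obstruction or degree argument yields one orbit of zeros, not exponentially many. In Vu\v{c}i\'c--\v{Z}ivaljevi\'c, in Hell, and in this paper, the exponential factor comes from double counting over many invariant subcomplexes. The paper fixes a near-perfect oriented matching $M$ and $\Gamma=\{0,e_1,\dots,e_\alpha\}$, and uses the $q^{|M|}$ translated difference complexes $\LL_h$. The constraints ignore block indices and leave fixed labels unconstrained precisely so that every $\LL_h[U]$ is a join of connected bipartite graphs, which is what the $G$-box criterion needs. Each $\LL_h$ then contains a free $G$-orbit of $q$ Tverberg faces. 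A Tverberg face $J$ lies in exactly $(\alpha+1)^{X_M(J)}q^{|M|-X_M(J)}$ of them.

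So $\alpha+1=|\Gamma|$ is the size of a set whose differences generate $\FF_p^{\,\alpha}$; it is not a count of ``dependent characters''. The exponent comes from general position:
\begin{itemize}
\item each fixed label is used at most $d+1$ times, so at most $s(d+1)$ vertices carry fixed labels;
\item each such vertex spoils at most one pair of $M$;
\item hence $2X_M(J)\geq N-2s(d+1)=(r-1-2s)(d+1)$.
\end{itemize}
Your $D=(r-1-2s)(d+1)$, attributed to a ``fixed summand'', is fitted to the target rather than derived. In fact the non-fixed part $\Eperp$ has dimension $(d+1)k(q-1)$. The ``reduction to $q$ parts in lower dimension'' in your opening sentence is never specified and plays no role in a working proof. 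Only your Step~3 matches the paper: orbits of size $q$ together with $|\mathcal T|=r!\,\Tv(a)$ give the factor $q/r!$.
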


\medskip
For $k=1$ and $s=0$, that is, for $r=q$ a prime power, Theorem~\ref{th : main} coincides with the bound of Vu\v{c}i\'c and \v{Z}ivaljevi\'c and of Hell, Theorem~\ref{th : VZ Hell}, since $\frac{q}{r!}=\frac{1}{(r-1)!}$ and $(r-1)(d+1)=N$.

\medskip
The new cases are $s\geq1$ or $k\geq2$, and they have no restriction on $r$.
Choosing for $q$ the largest prime power which divides $r$ we obtain, for every $r$, a bound of the same form as in Theorem~\ref{th : VZ Hell}.

\medskip
\begin{corollary}
	\label{cor : every r}
	Let $r\geq3$, $d\geq1$ be integers, $N:=(d+1)(r-1)$ and let $a\colon\Delta_N\longrightarrow\R^d$ be an affine map.
	If $b(r)=p^\alpha$ is the largest prime power which divides $r$, then
	\[
		\Tv(a)\ \geq\ \frac{b(r)}{r!}\,\Big(\frac{b(r)}{\alpha+1}\Big)^{\lceil \frac{N}2\rceil}.
	\]
\end{corollary}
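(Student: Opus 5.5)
Corollary~\ref{cor : every r} will follow from Theorem~\ref{th : main} by a single substitution. Write $r=k\,b(r)$ with $k=r/b(r)\geq1$, which is an integer because $b(r)$ divides $r$. Take $q:=b(r)=p^\alpha$ and $s:=0$ in Theorem~\ref{th : main}; its hypotheses $r\geq3$, $k\geq1$, $s\geq0$ then hold. The exponent becomes
\[
\Big\lceil\tfrac{(r-1-2\cdot0)(d+1)}{2}\Big\rceil=\Big\lceil\tfrac{(r-1)(d+1)}{2}\Big\rceil=\Big\lceil\tfrac N2\Big\rceil ,
\]
and the prefactor is $\frac{q}{r!}=\frac{b(r)}{r!}$. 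Substituting these gives exactly the claimed inequality.

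The one point to check is that $b(r)$ is well defined, i.e.\ that $r$ is divisible by some prime power $p^\alpha$ with $\alpha\geq1$. This holds because $r\geq3$ has a prime divisor $p$, and $p=p^1$ is already a prime power. Among the finitely many prime powers dividing $r$ we take the largest, and $\alpha$ is its exponent.

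There is no real obstacle: the corollary is a specialisation of the main theorem. The choice of the \emph{largest} prime power is not needed for validity; any prime power dividing $r$ would give a valid bound. The largest one is chosen because it makes the base $\frac{q}{\alpha+1}$ as large as possible, which matters for the exponential growth in $d$. One could remark that the bound is nontrivial for large $d$ exactly when $\frac{b(r)}{\alpha+1}>1$. This fails only in the degenerate case $b(r)=2$, $\alpha=1$, where the base is $1$ and the bound is trivial. Since the paper states no such refinement, the proof is just the substitution above.
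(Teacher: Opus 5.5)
Your proof is correct and is exactly the paper's argument: apply Theorem~\ref{th : main} with $q=b(r)$, $k=r/b(r)$ and $s=0$, so that $(r-1-2s)(d+1)=N$ and the prefactor is $\frac{b(r)}{r!}$. Your side remark about the degenerate case $b(r)=2$ is vacuous: for $r\geq3$ either an odd prime or $4$ divides $r$, so $b(r)\geq3$ and the base $\frac{b(r)}{\alpha+1}$ always exceeds $1$.
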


\medskip
Since $b(r)\geq3$ for every $r\geq3$, we have $\frac{b(r)}{\alpha+1}>1$, and so for every fixed $r\geq3$ the number $\Tv_r(d)$ grows at least exponentially in $d$.
The best exponential rate is obtained by maximising over all decompositions $r=kq+s$.
For an integer $r\geq3$ we set
\[
	\beta_r:=\max\Big\{\Big(\frac{q}{\alpha+1}\Big)^{\frac{r-1-2s}{2}}\ :\ r=kq+s,\ q=p^\alpha\ \text{a prime power},\ k\geq1,\ s\geq0\Big\}.
\]
The maximum is taken over a finite set, since $q\leq r$.
We fix a decomposition $r=kq+s$ at which the maximum is attained, and we denote its prime power $q$ by $q_r$; for instance $q_{10}=5$, see Table~\ref{tab : rates}.
Taking $q=b(r)$ and $s=0$ in the definition, we have $\beta_r\geq\big (\tfrac{b(r)}{\alpha+1}\big)^{\frac{r-1}2}>1$.
Since $q\geq\alpha+1$ for every prime power $q=p^\alpha$, the base of the power in Theorem~\ref{th : main} is at least $1$, and therefore Theorem~\ref{th : main} implies the following corollary.

\medskip
\begin{corollary}
	\label{cor : rate}
	Let $r\geq3$, $d\geq1$ be integers, $N:=(d+1)(r-1)$ and let $a\colon\Delta_N\longrightarrow\R^d$ be an affine map.
	Then, with $\beta_r$ and $q_r$ as above,
	\[
		\Tv(a)\ \geq\ \frac{q_r}{r!}\,\beta_r^{\,d+1}.
	\]
	Consequently, for every fixed $r\geq3$ the minimal numbers $\Tv_r(d)$ satisfy
	\[
		1\ <\ \beta_r\ \leq\ \liminf_{d\to\infty}\,\Tv_r(d)^{\frac{1}{d}}\ \leq\ \limsup_{d\to\infty}\,\Tv_r(d)^{\frac{1}{d}}\ \leq\ (r-1)! .
	\]
\end{corollary}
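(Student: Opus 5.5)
The statement to prove is Corollary~\ref{cor : rate}. It follows from Theorem~\ref{th : main} applied to the fixed optimal decomposition $r=kq_r+s$, together with the $(d+1)$-clouds upper bound.

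\textbf{The lower bound.} Write $q_r=p^\alpha$ and $x:=\frac{q_r}{\alpha+1}$. The exponent $e:=\big\lceil\frac{(r-1-2s)(d+1)}{2}\big\rceil$ satisfies $e\geq\frac{(r-1-2s)(d+1)}{2}$. Since $x\geq1$, as observed before the corollary, we get
\[
x^{e}\ \geq\ \Big(x^{\frac{r-1-2s}{2}}\Big)^{d+1}.
\]
This step needs care when $r-1-2s<0$. In that case $e\leq 0$, and the ceiling could make $x^e$ smaller than the real power. The claim still holds because $e\geq\frac{(r-1-2s)(d+1)}2$ and $x\geq1$ together give $x^e\geq x^{(r-1-2s)(d+1)/2}$ in every case.

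At the optimal decomposition the inner quantity $x^{\frac{r-1-2s}{2}}$ equals $\beta_r$. Theorem~\ref{th : main} therefore gives $\Tv(a)\geq\frac{q_r}{r!}\beta_r^{d+1}$. This holds for every affine map $a$, hence also for $\Tv_r(d)$.

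\textbf{The limits.} Taking $d$-th roots gives
\[
\Tv_r(d)^{1/d}\ \geq\ \Big(\frac{q_r}{r!}\Big)^{1/d}\,\beta_r^{(d+1)/d}.
\]
As $d\to\infty$, the factor $(q_r/r!)^{1/d}$ tends to $1$, since $q_r/r!$ is a positive constant. The factor $\beta_r^{(d+1)/d}$ tends to $\beta_r$. So $\liminf_{d\to\infty}\Tv_r(d)^{1/d}\geq\beta_r$.

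For the upper bound, the $(d+1)$-clouds configuration gives $\Tv_r(d)\leq((r-1)!)^d$. Hence $\Tv_r(d)^{1/d}\leq (r-1)!$ for every $d$, and so $\limsup_{d\to\infty}\Tv_r(d)^{1/d}\leq(r-1)!$.

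The strict inequality $\beta_r>1$ comes from the decomposition $q=b(r)$, $s=0$. Here $b(r)\geq3$ and $b(r)>\alpha+1$: for $p\geq3$ we have $p^\alpha\geq3^\alpha>\alpha+1$, and for $p=2$, $b(r)\geq3$ forces $\alpha\geq2$, so $2^\alpha>\alpha+1$. Also $r-1\geq2>0$. Therefore $\beta_r\geq\big(\frac{b(r)}{\alpha+1}\big)^{\frac{r-1}{2}}>1$.

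There is no real obstacle. The only delicate point is the sign of the exponent $r-1-2s$ when replacing the ceiling, and the inequality $x\geq1$ handles it uniformly.
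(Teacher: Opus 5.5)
Your proposal is correct and follows the paper's proof step for step. Like the paper, you remove the ceiling using $q_r/(\alpha+1)\geq1$, apply Theorem~\ref{th : main} at the optimal decomposition, take $d$-th roots for the $\liminf$, use the $(d+1)$-clouds configuration for the $\limsup$, and obtain $\beta_r>1$ from the decomposition with $q=b(r)$, $s=0$. Your special care for the case $r-1-2s<0$ is unnecessary but harmless: $t\mapsto x^t$ is non-decreasing for $x\geq1$, and $\beta_r>1$ already forces $r-1-2s>0$ at the optimum.
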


\medskip
The upper bound $(r-1)!$ comes from the $(d+1)$-clouds configuration, and Sierksma's conjecture states that $\Tv_r(d)^{\frac{1}{d}}=(r-1)!$ for all $d$.
For large $r$ the rate $\beta_r$ is roughly the square root of $(r-1)!$.
Indeed, taking $k=1$ and for $q$ the largest prime with $q\leq r$, the prime number theorem gives $s=r-q=o(r)$, and therefore
\[
	\log\beta_r\ =\ \big(\tfrac12+o(1)\big)\,r\log r\ =\ \big(\tfrac12+o(1)\big)\log\,(r-1)!\qquad (r\to\infty);
\]
quantitatively, $s\leq r^{0.525}$ for all large $r$ by Baker, Harman and Pintz~\cite{BakerHarmanPintz2001}.
The upper estimate $\log\beta_r\leq\frac{r-1}{2}\log r$ holds since $\frac{q}{(\alpha+1)}<r$.
Thus, for every $r$, the bound of Corollary~\ref{cor : rate} is of the same quality as the bound of Vu\v{c}i\'c--\v{Z}ivaljevi\'c and Hell for prime powers.

\medbreak
The rates $\beta_r$ of the previous corollary are compared with the conjectured rate $(r-1)!$ in Table~\ref{tab : rates} for all $r\leq 30$ which are not prime powers.

 \begin{table}[ht]
	\small
	\begin{tabular}{@{}rcrr@{\qquad}rcrr@{}}
		\toprule
		$r$ & $(q,k,s)$ & $\beta_r$ & $(r-1)!$ & $r$ & $(q,k,s)$ & $\beta_r$ & $(r-1)!$\\
		\midrule
		$6$ & $(5,1,1)$ & $3.95$ & $120$ & $20$ & $(19,1,1)$ & $2.04\cdot10^{8}$ & $1.2\cdot10^{17}$\\
		$10$ & $(5,2,0)$ & $61.8$ & $3.6\cdot10^{5}$ & $21$ & $(19,1,2)$ & $6.63\cdot10^{7}$ & $2.4\cdot10^{18}$\\
		$12$ & $(11,1,1)$ & $2146$ & $4.0\cdot10^{7}$ & $22$ & $(11,2,0)$ & $5.94\cdot10^{7}$ & $5.1\cdot10^{19}$\\
		$14$ & $(13,1,1)$ & $2.96\cdot10^{4}$ & $6.2\cdot10^{9}$ & $24$ & $(23,1,1)$ & $1.37\cdot10^{11}$ & $2.6\cdot10^{22}$\\
		$15$ & $(13,1,2)$ & $1.16\cdot10^{4}$ & $8.7\cdot10^{10}$ & $26$ & $(25,1,1)$ & $3.89\cdot10^{10}$ & $1.6\cdot10^{25}$\\
		$18$ & $(17,1,1)$ & $9.35\cdot10^{6}$ & $3.6\cdot10^{14}$ & $28$ & $(27,1,1)$ & $2.32\cdot10^{10}$ & $1.1\cdot10^{28}$\\
		& & & & $30$ & $(29,1,1)$ & $4.77\cdot10^{15}$ & $8.8\cdot10^{30}$\\
		\bottomrule
	\end{tabular}\vspace{5pt}
	\caption{\small The exponential rate $\beta_r$ of Corollary~\ref{cor : rate} for $r\leq30$ not a prime power, the decomposition $r=kq+s$ at which it is attained, and Sierksma's conjectured rate $(r-1)!$.}
	\label{tab : rates}
\end{table}

\medbreak
Furthermore, Theorem~\ref{th : main} improves the known bounds for most prime powers which are not primes.
Let $r=p^\alpha$ be a prime power with $\alpha\geq2$.
The bound of Theorem~\ref{th : VZ Hell} grows with the exponential rate $(\frac{r}{\alpha+1})^{\frac{r-1}{2}}$ in $d$, while the bound of Theorem~\ref{th : main} grows with the rate $\beta_r$.
If $r$ is sufficiently large, or if $r\leq3000$ and $r\notin\{4,9,16,25,27,121\}$, then $\beta_r>(\frac{r}{\alpha+1})^{\frac{r-1}{2}}$, so that for all sufficiently large $d$ the bound of Theorem~\ref{th : main} exceeds the bound of Theorem~\ref{th : VZ Hell}.
For example, for $r=8$ the exponential rate improves from $11.3$ to $22.9$, for $r=32$ from $1.86\cdot10^{11}$ to $1.82\cdot10^{17}$, and for $r=64$ from $1.88\cdot10^{30}$ to $2.0\cdot10^{42}$.

\medskip
The lower bound of Theorem~\ref{th : main} is obtained by counting Tverberg partitions which satisfy additional constraints on pairs of vertices of the simplex $\Delta_N$.
These constraints give the following Tverberg type results, valid for every number of parts $r$.

\medskip
For $r=kq+s$, where $q=p^\alpha$ is a prime power, $k\geq1$ and $s\geq0$, let
\[
	\Lambda\ :=\ (\FF_p^{\,\alpha}\times[k])\ \sqcup\ \{\infty_1,\dots,\infty_s\},
\]
a set with $kq+s=r$ elements, and fix a bijection $[r]\longrightarrow\Lambda$.
In other words, the index set $[r]$ is split into $k$ blocks of $q$ indices each, the indices of every block are identified with the elements of the vector space $\FF_p^{\,\alpha}$, and the remaining $s$ indices are called $\infty_1,\dots,\infty_s$.
A Tverberg partition is then written as $(F_c : c\in\Lambda)$; as before it is an ordered collection, but now its faces are labelled by the elements of $\Lambda$.
For a face $F_{(u,\iota)}$ we call $u\in\FF_p^{\,\alpha}$ its {\em $\FF_p^{\,\alpha}$-label} and $\iota\in[k]$ its {\em block}.
By $e_1,\dots,e_\alpha$ we denote the standard basis of the $\FF_p$-vector space $\FF_p^{\,\alpha}$.

\medskip
The constraints are imposed on the pairs of an {\em oriented matching} $M$ on the vertex set of $\Delta_N$, that is, a set of ordered pairs $(v,v')$ of distinct vertices of $\Delta_N$ such that every vertex belongs to at most one pair.
In other words, $M$ is a matching in the usual sense together with an orientation of each of its edges.

\medskip
\begin{theorem}[Tverberg partitions with difference constraints]
	\label{th : difference}
	Let $r\geq2$, $d\geq1$ be integers, $N:=(d+1)(r-1)$ and let $a\colon\Delta_N\longrightarrow\R^d$ be an affine map.
	Suppose $r=kq+s$, where $q=p^\alpha$ is a prime power with $k\geq1$ and $s\geq0$, and identify the index set $[r]$ with $\Lambda=(\FF_p^{\,\alpha}\times[k])\sqcup\{\infty_1,\dots,\infty_s\}$.
	Let $\Gamma\subseteq\FF_p^{\,\alpha}$ be a set whose differences $\{g-g' : g,g'\in\Gamma\}$ generate the group $\FF_p^{\,\alpha}$, and let $M$ be an oriented matching on the vertex set of $\Delta_N$.
	Then there is a Tverberg partition $(F_c : c\in\Lambda)$ of the affine map $a$ whose faces contain all vertices of $\Delta_N$ and such that for every pair $(v,v')\in M$ the following holds: if $v\in F_{(u,\iota)}$ and $v'\in F_{(u',\iota')}$ for some $u,u'\in\FF_p^{\,\alpha}$ and some $\iota,\iota'\in[k]$, then $u'-u\in\Gamma$.
\end{theorem}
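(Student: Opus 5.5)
The plan is to reduce to general position and then use a configuration space/test map argument with an elementary abelian group acting, but with the affine structure used in an essential way. Concretely, I will use the standard trick that makes Tverberg problems for arbitrary $r$ tractable for affine maps: the "constraint" or "dummy point" method. The $s$ indices $\infty_1,\dots,\infty_s$ and the $k$ blocks should not be handled topologically. Instead, they are absorbed by a reduction of the following kind. Treat the map $a$ together with auxiliary affine coordinates that force certain vertex sets to lie in prescribed parts. Then apply a topological Tverberg-type theorem only for the prime power $q$ on a larger simplex.

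First I would reduce, via the density argument given above, to $a$ in general position. The existence of a Tverberg partition with the required property is a closed condition: finitely many candidate collections, and the constraint depends only on the combinatorics. So a limiting argument passes it to arbitrary $a$. In general position every Tverberg partition uses all vertices, which gives the "contain all vertices" clause.

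Next I would build the configuration space. I use the chessboard/deleted join $K:=(\Delta_N)^{*q}_{\Delta(2)}$, modified so that for each pair $(v,v')\in M$ the joint positions are restricted. Point $v$ in the copy labelled $u$ and $v'$ in the copy labelled $u'$ are allowed only when $u'-u\in\Gamma$. This is the subcomplex of the join of $\Delta_N$ and $[q]$-labels in which each matched pair $\{v,v'\}$ contributes the join of the "pair complex" $P_\Gamma=\{(u,u'):u'-u\in\Gamma\}$, viewed as a bipartite graph on $\FF_p^\alpha\sqcup\FF_p^\alpha$. Unmatched vertices contribute the discrete $q$-point set. The translation group $G=\FF_p^{\alpha}$ acts freely on labels and preserves $P_\Gamma$ because the constraint is translation invariant. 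The hypothesis that the differences of $\Gamma$ generate $\FF_p^\alpha$ says exactly that $P_\Gamma$ is connected. Then each matched pair factor is a connected graph, which is $0$-connected. This is the same connectivity ($N$-dimensional join gains) as two free points would provide, so $K$ is $(N-1)$-connected.

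To handle $r=kq+s$ rather than $q$, I would use the affine structure. Take the $k$ blocks and the $s$ extra parts and use the Sarkaria/linear-algebra or "Tverberg with tolerance by lifting" reduction. I would first take a lift $\tilde a\colon\Delta_N\to\R^{d'}$ which appends coordinates. This forces the structure of a $q$-fold Tverberg partition to refine into $r$ parts by splitting each part into $k$ pieces, where pieces with equal $\FF_p^\alpha$-label and different block are separated by a generic hyperplane-type argument. Then I assign $s$ parts as singletons or small faces near the common point. I expect this step, making the counts $N=(d+1)(r-1)$ match the connectivity needed for a $G$-equivariant test map $K\to S(W_q^{\oplus(d+1)})$, to be the main obstacle. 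What I would try first is Sarkaria's tensor trick: encode $a$ in $\R^{d+1}\otimes\R^{r-1}$ over the index set $\Lambda$, and apply the constrained equivariant theorem with group $G$ acting on $\FF_p^\alpha\times[k]$ by translation in the first factor and fixing the $\infty_i$. The vector space $W$ of $\Lambda$-sums-zero vectors has no $G$-fixed nonzero vectors only after quotienting by block sums, which is where the affine (linear) Sarkaria reduction lets us kill those fixed directions explicitly rather than topologically.

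Finally, with the dimension count verified, I apply Volovikov's theorem for the free $\FF_p^\alpha$-action: there is no $G$-map from an $(n-1)$-connected space to an $(n-1)$-sphere with fixed-point-free $G$-action. This yields a zero of the test map, i.e. a Tverberg partition in $K$, which satisfies the difference constraints by construction.
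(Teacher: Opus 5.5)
Your argument is complete only in the case $k=1$, $s=0$. There the constrained deleted join with $q=r$ labels is $(N-1)$-connected because each pair factor is a connected bipartite graph. Moreover $G$ acts on $S(\E)$ without fixed points, so Volovikov's theorem applies; this is exactly the one-orbit case of the paper's $G$-box criterion.

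\textbf{The gap.} The genuine gap is the step you yourself call the main obstacle: passing to $r=kq+s$. As soon as the faces are indexed by all of $\Lambda$, which the conclusion requires, $G=\FF_p^{\,\alpha}$ has $k+s\geq 2$ orbits on the labels. Hence $\E=W_r^{\oplus(d+1)}$ has a non-zero fixed subspace $\E^G$ of dimension $(d+1)(k+s-1)$, and no equivariant obstruction exists at all: the constant map to a point of $S(\E^G)$ is a $G$-map from your complex to $S(\E)$. Indeed, for $r$ not a prime power the statement is false for continuous maps (for suitable $d$), so the affine structure must enter essentially.

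Your proposed fix does not supply that. Quotienting by block sums so that an affine Sarkaria reduction ``kills the fixed directions explicitly'' is not an argument. Zeros of the $\Eperp$-component of the test map are not Tverberg faces, since $0\in\conv(A(J))$ is needed in all of $\E$, including the $\E^G$-coordinates.

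The refinement idea fails as well. A constrained $q$-fold Tverberg partition exists with room to spare, since its test space has dimension only $(d+1)(q-1)<N$. But nothing forces each of its parts to split into $k$ pieces whose images still share a common point. Nothing produces $s$ further parts through that point for an arbitrary configuration. And lifting $a$ to some $\tilde a$ and projecting back does not convert a $q$-partition into an $r$-partition.

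\textbf{What is missing.} You need a way to use affineness over the fixed sphere $S(\E^G)$. The paper keeps all $r$ labels from the start and proves a stronger combinatorial fact: $\LL_M(\Gamma)[U]$ is top-concentrated of dimension $N$ for every $G$-box $U$, that is, whenever every column is restricted to a non-empty union of blocks and fixed labels. This uses that the differences of $\Gamma$ generate $\FF_p^{\,\alpha}$, and also that block indices are unconstrained and fixed labels are adjacent to everything.

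It then applies Theorem~\ref{th : fixed sphere}. If there were no Tverberg face, the following would hold:
\begin{enumerate}
\item The incidence space $T_{a,\LL}$ would have the equivariant cohomology of $\LL$ via $\pr_2$, and hence that of $\mathrm{B}G$ in degree $N-1$.
\item Over $S(\E^G)$ the fibres of $\pr_1$ are the complexes $\LL[U_{a,y}]$. Each $U_{a,y}$ is a $G$-box because $U_{a,y}\cap Q_j$ is determined by the signs of the affine functions $f^y_i$ at the single point $a(p_j)$. This locality is precisely where affineness is used.
\item Pulling back a class $\Theta\in H^{N-1}_G(S(\E))$ whose restriction to the fixed sphere is $e_G(\Eperp)\times[S(\E^G)]^*$, and using the $(N-2)$-connectivity of these fibres, forces $e_G(\Eperp)=0$. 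This contradicts $\Eperp$ having no non-zero fixed vector.
\end{enumerate}
Your proposal contains no substitute for this fixed-sphere analysis, so it does not prove the statement beyond $r=q$.
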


\medskip
The previous theorem says that for every pair $(v,v')\in M$ with neither vertex in one of the faces $F_{\infty_1},\dots,F_{\infty_s}$, the $\FF_p^{\,\alpha}$-label of the face containing $v'$ minus the $\FF_p^{\,\alpha}$-label of the face containing $v$ lies in $\Gamma$.
Here the blocks of the two faces are arbitrary, and a pair with a vertex in one of the faces $F_{\infty_1},\dots,F_{\infty_s}$ is not constrained.
For instance, for $q=p$ a prime, $k=1$, $s=0$ and $\Gamma=\{0,1\}$, the faces are $F_0,\dots,F_{p-1}$, and for every pair $(v,v')\in M$ the vertex $v'$ lies in the face of $v$ or in the next one, cyclically.
Figure~\ref{fig : difference} illustrates the theorem.
For $k=1$, $s=0$, that is for $r=q$ a prime power, the theorem holds for continuous maps $\Delta_N\longrightarrow\R^d$ as well.

\begin{figure}[ht]
\centering
\begin{tikzpicture}[
	cell/.style={draw,minimum width=16mm,minimum height=7mm,inner sep=0pt,font=\scriptsize},
	allowed/.style={cell,fill=black!22},
	forbidden/.style={cell,fill=white}]
\newcommand{\labelgrid}[4]{% #1 x-shift, #2 rows allowed in block 1 (0/1 flags), #3 same for block 2, #4 title
	\begin{scope}[xshift=#1cm]
	\node[font=\small] at (1.0,1.25) {#4};
	\node[font=\scriptsize] at (0.15,0.7) {block $1$};
	\node[font=\scriptsize] at (1.85,0.7) {block $2$};
	\foreach \u/\f in {#2}{\node[\f] at (0.15,-0.75*\u) {\ifnum\u=0\else$F_{(\u,1)}$\fi};}
	\foreach \u/\f in {#3}{\node[\f] at (1.85,-0.75*\u) {$F_{(\u,2)}$};}
	\foreach \u in {0,1,2}{\node[font=\scriptsize,anchor=east] at (-0.7,-0.75*\u) {$u=\u$};}
	\node[allowed] at (1.0,-2.45) {$F_{\infty_1}$};
	\node[cell,line width=1.3pt,fill=none] at (0.15,0) {$F_{(0,1)}\ni v$};
	\end{scope}}
\labelgrid{0}{0/allowed,1/allowed,2/forbidden}{0/allowed,1/allowed,2/forbidden}{$\Gamma=\{0,1\}$}
\labelgrid{5.0}{0/forbidden,1/allowed,2/allowed}{0/forbidden,1/allowed,2/allowed}{$\Gamma=\{1,2\}$}
\end{tikzpicture}

\smallskip
{\small (a)\quad $r=7=2\cdot3+1$: the faces of a Tverberg partition, arranged by block and $\FF_3$-label $u$, with $v\in F_{(0,1)}$.}

\bigskip
\begin{tikzpicture}[scale=0.95,
	pt/.style={circle,fill=black,inner sep=1.4pt},
	pr/.style={->,>=stealth,thick,shorten <=3pt,shorten >=3pt}]
	\coordinate (A) at (0,0); \coordinate (B) at (5,0); \coordinate (C) at (2.5,3.8);
	\coordinate (D) at (-0.5,1.6); \coordinate (E) at (5.5,0.8);
	\coordinate (G) at (1.7,3.2); \coordinate (H) at (3.3,-0.8);
	\fill[black!12] (A) -- (B) -- (C) -- cycle; \draw[thin] (A) -- (B) -- (C) -- cycle;
	\draw[line width=1.6pt] (D) -- (E);
	\draw[line width=1.6pt,dash pattern=on 5pt off 2.5pt] (G) -- (H);
	\node[circle,draw,inner sep=1.7pt,fill=white] at (2.5,1.2) {};
	\foreach \P in {A,B,C,D,E,G,H}{\node[pt] at (\P) {};}
	\node[below right=0pt,font=\scriptsize] at (B) {$a(v_1)$};
	\node[right=2pt,font=\scriptsize] at (E) {$a(v'_1)$};
	\node[left=2pt,font=\scriptsize] at (D) {$a(v_2)$};
	\node[left=2pt,font=\scriptsize] at (G) {$a(v'_2)$};
	\node[below=2pt,font=\scriptsize] at (H) {$a(v_3)$};
	\node[below left=0pt,font=\scriptsize] at (A) {$a(v'_3)$};
	\node[above=2pt,font=\scriptsize] at (C) {$a(w)$};
	\draw[pr,bend right=40] (B) to (E);
	\draw[pr,bend right=22] (D) to (G);
	\draw[pr,bend left=25] (H) to (A);
	\node[font=\small] at (1.45,0.45) {$a(F_0)$};
	\node[font=\small] at (4.75,1.2) {$a(F_1)$};
	\node[font=\small] at (3.85,-0.35) {$a(F_2)$};
\end{tikzpicture}

\smallskip
{\small (b)\quad $r=q=3$, $d=2$, $\Gamma=\{0,1\}$: an affine map $a\colon\Delta_6\longrightarrow\R^2$ and a Tverberg partition $(F_0,F_1,F_2)$ satisfying the constraint for $M=\{(v_1,v'_1),(v_2,v'_2),(v_3,v'_3)\}$.}
\caption{\small Theorem~\ref{th : difference}.
(a) If $v\in F_{(0,1)}$ (thick frame), then $v'$ may lie only in a shaded face: in a face whose $\FF_3$-label $u$ satisfies $u-0\in\Gamma$, in either block, or in $F_{\infty_1}$.
For $\Gamma=\{1,2\}$, which does not contain $0$, the faces with label $u=0$, including the face of $v$, are excluded; this is the mechanism behind Corollary~\ref{cor : matching}.
(b) The faces are $F_0=\conv\{v_1,v'_3,w\}$ (shaded triangle), $F_1=\conv\{v'_1,v_2\}$ (solid segment) and $F_2=\conv\{v_3,v'_2\}$ (dashed segment); their images meet in the marked point.
Each arrow points from $a(v)$ to $a(v')$ for a pair $(v,v')\in M$, and in each pair $v'$ lies in the face following the face of $v$ cyclically: $1-0$, $2-1$ and $0-2$ all equal $1\in\Gamma$.}
\label{fig : difference}
\end{figure}

\medskip
The assumption on $\Gamma$ cannot be dropped: for $\Gamma=\{0\}$, $k=1$ and $s=0$ the theorem would put the two vertices of every pair $(v,v')\in M$ into the same face.
This fails already for $d=1$.
Let $a\colon\Delta_{2r-2}\longrightarrow\R$ be injective on the $2r-1$ vertices.
In every Tverberg partition of $a$ at most one face contains a vertex mapped to the common point, and every other face contains vertices mapped to both sides of it; since there are only $2r-1$ vertices, one face is the vertex mapped to the median and every other face is an edge whose endpoints are mapped to opposite sides of the median.
Hence the two vertices mapped to the two smallest points always lie in different faces, and the theorem fails for $M$ consisting of this one pair.

\medskip
If $0\notin\Gamma$ and $s=0$, then $u'-u\in\Gamma$ implies $u'\neq u$, and so the two vertices of every pair $(v,v')\in M$ lie in different faces.
Such a set $\Gamma$ exists for every prime power $q\geq3$: for $\alpha=1$ take $\Gamma=\{1,2\}$, and for $\alpha\geq2$ take $\Gamma=\{e_1,\dots,e_\alpha,e_1+e_2\}$.
Since every $r\geq3$ has a prime power divisor $q\geq3$, for instance $q=b(r)$, Theorem~\ref{th : difference} with $k=r/q$ and $s=0$ yields the following result.

\medskip
\begin{corollary}[Affine matching theorem]
	\label{cor : matching}
	Let $r\geq3$, $d\geq1$ be integers, $N:=(d+1)(r-1)$ and let $a\colon\Delta_N\longrightarrow\R^d$ be an affine map.
	Then for every matching $M$ on the vertex set of $\Delta_N$ there is a Tverberg partition of $a$ whose faces contain all vertices of $\Delta_N$ and in which the two vertices of every edge of $M$ lie in different faces.
\end{corollary}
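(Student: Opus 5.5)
The plan is to deduce the statement directly from Theorem~\ref{th : difference}, as the paragraph preceding the corollary indicates.

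\emph{Setup.} Given $r\geq3$, choose a prime power $q=p^\alpha\geq3$ dividing $r$. Such a $q$ exists: if $r$ has an odd prime factor $p$, take $q=p$; otherwise $r=2^\beta$ with $\beta\geq2$, and we take $q=4$ (or $q=r$). Set $k=r/q\geq1$ and $s=0$, so that $\Lambda=\FF_p^{\,\alpha}\times[k]$ and there are no exceptional faces $F_{\infty_j}$. Orient each edge of the given matching $M$ arbitrarily; this yields an oriented matching, which we still call $M$.

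\emph{Choice of $\Gamma$.} We need a set $\Gamma\subseteq\FF_p^{\,\alpha}$ with $0\notin\Gamma$ whose differences generate $\FF_p^{\,\alpha}$.
\begin{compactitem}[\quad --]
\item For $\alpha=1$, take $\Gamma=\{1,2\}$. Since $q\geq3$, the element $2$ is distinct from $0$ and $1$ in $\FF_p$. The difference $2-1=1$ generates $\FF_p$.
\item For $\alpha\geq2$, take $\Gamma=\{e_1,\dots,e_\alpha,e_1+e_2\}$. It does not contain $0$. Its differences include $(e_1+e_2)-e_1=e_2$, then $e_j=e_2+(e_j-e_2)$ for every $j$, so all basis vectors lie in the generated subgroup.
\end{compactitem}

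\emph{Application.} Theorem~\ref{th : difference} now gives a Tverberg partition $(F_c : c\in\Lambda)$ covering all vertices of $\Delta_N$. For each pair $(v,v')\in M$, write $v\in F_{(u,\iota)}$ and $v'\in F_{(u',\iota')}$. Because $s=0$, every vertex lies in a face of this form, so the constraint always applies and gives $u'-u\in\Gamma$. Since $0\notin\Gamma$, we get $u'\neq u$. Hence $(u,\iota)\neq(u',\iota')$, so $v$ and $v'$ lie in different faces.

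\emph{Main obstacle.} There is no real obstacle here; the substance lies entirely in Theorem~\ref{th : difference}. The only points needing care are the existence of a prime power divisor $q\geq3$ of $r$, which settles the case where $r$ is a power of $2$, and the verification that the differences of $\Gamma$ generate $\FF_p^{\,\alpha}$ while $0\notin\Gamma$.
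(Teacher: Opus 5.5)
Your proposal is correct and follows the paper's proof essentially verbatim. You use the same choice of a prime power $q\geq3$ dividing $r$ with $k=r/q$ and $s=0$, the same sets $\Gamma=\{1,2\}$ and $\Gamma=\{e_1,\dots,e_\alpha,e_1+e_2\}$, an arbitrary orientation of the matching, and the observation that $0\notin\Gamma$ forces the two vertices of each edge into different faces.
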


\medskip
Corollary~\ref{cor : matching} extends~\cite[Cor.\,1.7]{Blagojevic2025}, which treats a single pair for every $r$, to an arbitrary matching, and, for affine maps, Hell's matching theorem~\cite[Thm.\,2]{Hell2008constraints} from prime powers to all $r\geq3$.
For $r=2$ the statement is false: if $d=1$ and $a\colon\Delta_2\longrightarrow\R$ maps the vertices $w_1,w_2,w_3$ to points $a(w_1)<a(w_2)<a(w_3)$, then the only unordered Tverberg partition of $a$ is $\{\{w_2\},\conv\{w_1,w_3\}\}$, so the pair $\{w_1,w_3\}$ cannot be separated.
Correspondingly, for $q=2$ the only set $\Gamma\subseteq\FF_2$ with $0\notin\Gamma$ is $\{1\}$, and its differences do not generate $\FF_2$; so for $r=2$ Theorem~\ref{th : difference} applies only with $\Gamma=\FF_2$, where it is Tverberg's theorem.

\medskip
Theorem~\ref{th : difference} and Corollary~\ref{cor : matching} remain true for affine maps $\Delta_n\longrightarrow\R^d$ with $n\geq N$, and they need no general position assumption.
Like Theorem~\ref{th : main}, they are affine statements: for $r$ not a prime power each of them implies Theorem~\ref{th : Tverberg's original}, and therefore, by the counterexamples to the topological Tverberg conjecture, for such $r$ none of them extends to all continuous maps.

\medskip
Corollary~\ref{cor : matching} is the simplest instance of a more general question.
Hell~\cite{Hell2008constraints} introduced constraint graphs: a graph on the vertex set of $\Delta_N$ is a constraint graph if for every map $\Delta_N\longrightarrow\R^d$ there is a Tverberg partition in which any two adjacent vertices lie in different faces.
Such graphs are now called {\em Tverberg graphs}; see~\cite{Engstrom2011}.
In this language, Corollary~\ref{cor : matching} says that for every $r\geq3$ matchings are Tverberg graphs for affine maps.
For $r$ a prime power and continuous maps, Hell~\cite{Hell2008constraints} showed that matchings and several further classes of graphs, among them small complete graphs, stars, paths and cycles, are Tverberg graphs; see also Sch\"oneborn and Ziegler~\cite{SchoenebornZiegler2005}.
Engstr\"om~\cite{Engstrom2011} gave a local criterion for Tverberg graphs in terms of the neighbourhoods of the vertices, and Engstr\"om and Nor\'en~\cite{EngstromNoren2014} related Tverberg graphs to graph colouring.
Constraints given by more general graphs, such as forests and complete graphs, can be treated for every number of parts $r$ within the framework of this paper.
They will be the subject of a separate paper~\cite{BlagojevicTG}, which is in preparation.

\medskip
The lower bound of Theorem~\ref{th : main} can be improved in two ways.
First, the complexes used in its proof are invariant not only under translations of the $\FF_p^{\,\alpha}$-labels but also under permutations of the $k$ blocks and of the $s$ fixed labels, which multiplies the count by $k!\,s!$.
Second, the proof fixes one matching on the vertex set of $\Delta_N$ and uses, for every Tverberg partition, the least favourable position of its faces with fixed labels with respect to this matching; averaging over all matchings with $\lfloor\frac{N+1}2\rfloor$ pairs instead leads to an average which can be computed exactly.
For a polynomial $P(u)$ in one variable $u$ with real coefficients and an integer $j\geq0$ we denote by $[u^j]P$ the coefficient of $u^j$ in $P(u)$, so that $P(u)=\sum_{j\geq0}\big([u^j]P\big)\,u^j$; for example, $[u^1](1+u)^2=2$.
This notation goes back to Goulden and Jackson~\cite{GouldenJackson1983}.

\medskip
\begin{theorem}
	\label{th : averaged}
	Let $r\geq3$, $d\geq1$ be integers, $N:=(d+1)(r-1)$ and let $a\colon\Delta_N\longrightarrow\R^d$ be an affine map.
	If $r=k\,q+s$, where $q=p^{\alpha}$ is a prime power with $k\geq1$ and $s\geq0$, then
	\[
		\Tv(a)\ \geq\ \frac{q\,k!\,s!}{r!}\,\binom{N+1}{s(d+1)}\Big([u^{N+1-s(d+1)}]\,(1+u)^{N+1-2\lfloor\frac{N+1}2\rfloor}\Big(1+2u+\frac{\alpha+1}{q}\,u^2\Big)^{\lfloor\frac{N+1}2\rfloor}\Big)^{-1}.
	\]
\end{theorem}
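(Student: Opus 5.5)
We obtain Theorem~\ref{th : averaged} by a double-counting argument built on Theorem~\ref{th : difference}. By the density argument from the introduction we may assume that $a$ is in general position. Then every Tverberg partition is a partition of the vertex set into $r$ faces.

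Let $m:=\lfloor\frac{N+1}2\rfloor$ and let $\mathcal M$ be the set of all oriented matchings with $m$ pairs on the vertex set of $\Delta_N$. Fix a set $\Gamma\subseteq\FF_p^{\,\alpha}$ whose differences generate $\FF_p^{\,\alpha}$ and for which the proportion of ordered pairs $(u,u')$ with $u'-u\in\Gamma$ is $|\Gamma|/q$. We want this proportion to be as small as possible, ideally $|\Gamma|=\alpha+1$: take $\Gamma=\{0,e_1,\dots,e_\alpha\}$, whose differences contain $e_1,\dots,e_\alpha$. This is where the factor $\frac{\alpha+1}{q}$ enters.

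For each $M\in\mathcal M$, Theorem~\ref{th : difference} gives a labelled Tverberg partition $(F_c:c\in\Lambda)$ that satisfies the constraints for $M$. Count the pairs $(M,\text{labelled partition})$ with the constraint satisfied. The number of such pairs is at least $|\mathcal M|$.

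Next bound the number of $M$ that are compatible with one fixed labelled Tverberg partition. Let $W$ be the set of vertices in the faces $F_{\infty_1},\dots,F_{\infty_s}$; in general position $|W|$ is determined up to the constraint that each face has at most $d+1$ vertices. Rather than a worst case, we use translation symmetry. Translating all $\FF_p^{\,\alpha}$-labels by a fixed element preserves being a Tverberg partition and preserves the constraint. The same holds for permuting the $k$ blocks and the $s$ infinity labels. So each unordered Tverberg partition gives at most $r!/(q\,k!\,s!)$ essentially distinct labelled orbits, and we may average over the $q$ translations. We count the pairs $(M,\text{assignment})$, where an assignment gives each vertex an $\FF_p^{\,\alpha}$-label or puts it in $W$. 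A pair of $M$ with both ends outside $W$ is compatible with probability $\frac{\alpha+1}{q}$ over a uniform random translation/labelling. Pairs meeting $W$ are free.

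Encode this with the generating function in $u$, where $u$ marks a vertex not in $W$. An edge contributes $1$ (both ends in $W$), $2u$ (exactly one end in $W$), or $\frac{\alpha+1}{q}u^2$ (both ends outside $W$, weighted by the compatibility proportion). An unmatched vertex, present when $N+1$ is odd, contributes $1+u$. Extracting the coefficient of $u^{N+1-s(d+1)}$ therefore gives the weighted number of ways to place $M$ relative to a set $W$ of size $s(d+1)$. Dividing by the $\binom{N+1}{s(d+1)}$ choices of $W$ normalises the averaging.

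Combining this with the orbit count $r!/(q\,k!\,s!)$ and comparing with $|\mathcal M|$ gives the stated inequality.

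The main obstacle is the size of $W$. The formula uses $|W|=s(d+1)$, but in general position the $s$ infinity faces could be smaller. We expect to handle this either by an averaging or convexity argument showing that $|W|=s(d+1)$ is the worst case, or by adding dummy vertices (using $n\geq N$) to force $|W|$ exactly. The second main difficulty is making sure that averaging over translations yields exactly the factor $\frac{\alpha+1}{q}$ for each constrained pair simultaneously. For this we will average over all $M$ first: for a fixed partition, $|\mathcal M|$ times the fraction of compatible matchings is computed by linearity over random matchings, and this is exactly what the coefficient expression records.
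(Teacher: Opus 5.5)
There is a genuine gap at the step that is supposed to produce the factor $\frac{\alpha+1}{q}$ for each pair of $M$ with both ends outside $W$.

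With a single set $\Gamma$ for all pairs, which is all that Theorem~\ref{th : difference} gives you, compatibility of a pair $(v,v')$ with a labelled partition is the condition $u'-u\in\Gamma$. This condition is fixed by the labels. A global translation of the $\FF_p^{\,\alpha}$-labels leaves every difference $u'-u$ unchanged; you note yourself that it preserves the constraint. So the ``probability over a uniform random translation'' is $0$ or $1$, not $\frac{\alpha+1}{q}$. Relabelling the faces does not help either: it moves $W$ and makes the pairs dependent. Linearity of expectation over random matchings computes the expected number of compatible pairs, not the proportion of matchings all of whose pairs are compatible.

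In fact the per-partition bound you need is false for a constant $\Gamma$. Take $r=q=3$, $s=0$, $\Gamma=\{0,1\}$.
\begin{itemize}
\item An edge joining two vertices with equal labels is compatible in both orientations; an edge joining different labels is compatible in exactly one.
\item So a Tverberg face with label multiplicities $(d+1,d+1,1)$, for example any Tverberg face of the $(d+1)$-clouds configuration, is compatible with the fraction $2^{S-m}$ of the orientations of an unoriented matching with $S$ equal-label edges.
\item Each edge of a uniformly random matching joins equal labels with probability $\frac{d}{2d+3}$. By convexity the average fraction is at least $2^{-m(d+3)/(2d+3)}$, which exceeds the required $(2/3)^m$ for $d\geq8$.
\end{itemize}

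The missing idea is to translate $\Gamma$ independently on each pair. Use the $q^{|M|}$ complexes $\LL_h=\LL_M(\Gamma+h_{v,v'} : (v,v')\in M)$, $h\in(\FF_p^{\,\alpha})^M$, and double count pairs $(h,J)$ for every $M$ before summing over $\mathcal M$.
\begin{itemize}
\item The differences of $\Gamma+h_{v,v'}$ are those of $\Gamma$. So Proposition~\ref{prop : difference complex} and Theorem~\ref{th : fixed sphere} give a Tverberg face in every $\LL_h$.
\item Each $\LL_h$ is invariant under the group generated by $G$ and the permutations of blocks and of fixed labels, which acts freely on Tverberg faces. Hence every $\LL_h$ contains at least $q\,k!\,s!$ Tverberg faces.
\item Conversely, for a fixed Tverberg face $J$, the condition for a pair involves only $h_{v,v'}$. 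It holds for exactly $|\Gamma|=\alpha+1$ of the $q$ values when both labels lie in blocks, whatever those labels are, and for all $q$ values otherwise.
\item So $J$ lies in exactly $q^{|M|}(\frac{\alpha+1}{q})^{X_M(J)}$ of the $\LL_h$ (Lemma~\ref{lem : number of translations}). The independence across pairs comes from the product structure of $(\FF_p^{\,\alpha})^M$, not from any symmetry of the partition.
\end{itemize}

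Your concern about $|W|$ is then settled by monotonicity, with no dummy vertices needed. $X_M(J)$ is the number of pairs of $M$ disjoint from $W$, which does not increase as $W$ grows, and $\frac{\alpha+1}{q}\leq1$. The uniform measure on $\mathcal M$ is invariant under permutations of the vertices. So the averaged weight depends only on $|W|$ and is largest at the general-position maximum $|W|=s(d+1)$.

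With these repairs, your evaluation of the average is correct. You fix $M$, sum over $W$ with the factors $1$, $2u$, $\frac{\alpha+1}{q}u^2$ and $1+u$, divide by $\binom{N+1}{s(d+1)}$, and exchange the roles of $M$ and $W$ by symmetry. This is slightly more direct than the paper's explicit count of matchings by type.
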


\medskip
The bound of Theorem~\ref{th : averaged} is at least the bound of Theorem~\ref{th : main}, see Section~\ref{subsec : proof averaged}.
Its exponential rate follows from the estimate $[u^j]P\leq P(u)\,u^{-j}$, valid for every $u>0$ when all coefficients of $P$ are non-negative.
For an integer $r\geq3$ we set
\[
	\beta'_r:=\max_{r=kq+s}\ \sup_{u>0}\ \frac{(r-1)^{r-1}\,u^{r-1-s}}{s^s\,(r-1-s)^{r-1-s}\big(1+2u+\frac{\alpha+1}q\,u^2\big)^{\frac{r-1}2}},
\]
where the maximum is taken over all decompositions $r=kq+s$ with $q=p^\alpha$ a prime power, $k\geq1$ and $s\geq1$; the supremum is attained at the positive root of the equation
\[\frac{\alpha+1}{q}\,s\,u^2-(r-1-2s)\,u-(r-1-s)=0.\]

\medskip
\begin{corollary}
	\label{cor : averaged rate}
	For every integer $r\geq3$ the minimal numbers $\Tv_r(d)$ satisfy
	\[
		\liminf_{d\to\infty}\,\Tv_r(d)^{\frac{1}{d}}\ \geq\ \max\{\beta_r,\beta'_r\}.
	\]
\end{corollary}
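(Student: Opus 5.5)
The plan is to derive Corollary~\ref{cor : averaged rate} from Theorem~\ref{th : main} and Theorem~\ref{th : averaged} separately, and then take the larger rate. The first half is immediate. Corollary~\ref{cor : rate} gives $\Tv_r(d)\geq\frac{q_r}{r!}\beta_r^{d+1}$. Since $(q_r/r!)^{1/d}\to1$ and $\beta_r^{(d+1)/d}\to\beta_r$, we get $\liminf\Tv_r(d)^{1/d}\geq\beta_r$. It therefore remains to show $\liminf\Tv_r(d)^{1/d}\geq\beta'_r$. For this it suffices to fix one decomposition $r=kq+s$ with $s\geq1$ and one $u>0$, and to prove that the $d$-th root of the bound in Theorem~\ref{th : averaged} has $\liminf$ at least the expression inside the supremum.

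Fix such a decomposition and write $m=\lfloor\frac{N+1}2\rfloor$ and $j=N+1-s(d+1)$. Let $P(u)=(1+u)^{N+1-2m}\big(1+2u+\frac{\alpha+1}{q}u^2\big)^m$. All coefficients of $P$ are nonnegative, so the estimate $[u^j]P\leq P(u)u^{-j}$ holds for every $u>0$. Theorem~\ref{th : averaged} then gives
\[
\Tv_r(d)\ \geq\ \frac{q\,k!\,s!}{r!}\binom{N+1}{s(d+1)}\,u^{j}\,(1+u)^{-(N+1-2m)}\Big(1+2u+\tfrac{\alpha+1}{q}u^2\Big)^{-m}.
\]
Now take $d$-th roots and let $d\to\infty$, with $N+1=(d+1)(r-1)+1$ and $j=(d+1)(r-1-s)+1$.
\begin{itemize}
\item The constant $\frac{q\,k!\,s!}{r!}$ and the factor $(1+u)^{-(N+1-2m)}$ contribute root $1$, since $N+1-2m\in\{0,1\}$.
\item The factor $u^{j}$ contributes $u^{r-1-s}$.
\item Since $m/d\to\frac{r-1}{2}$, the last factor contributes $\big(1+2u+\frac{\alpha+1}{q}u^2\big)^{-\frac{r-1}2}$.
\end{itemize}

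The main step is the asymptotics of the binomial coefficient. By Stirling's formula, or by the standard estimate $\binom{n}{\lambda n}=e^{n H(\lambda)+O(\log n)}$ with $H$ the entropy function, one gets
\[
\binom{(d+1)(r-1)+1}{s(d+1)}^{1/d}\ \longrightarrow\ \frac{(r-1)^{r-1}}{s^s(r-1-s)^{r-1-s}}.
\]
The extra $+1$ in the top argument changes the binomial coefficient only by a factor polynomial in $d$, which is harmless under $d$-th roots. Multiplying the limits shows that $\liminf\Tv_r(d)^{1/d}$ is at least the quantity inside the supremum in the definition of $\beta'_r$, for this $u$ and this decomposition. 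Taking the supremum over $u>0$ and then the maximum over the finitely many decompositions gives $\beta'_r$.

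Two small points need care. First, when $s=r-1$, that is $k=1$ and $q=1$, the setup is not allowed because $q$ must be a prime power. Since $q\geq2$ we have $r-1-s\geq1$, so $0^0$ issues never arise. Second, Theorem~\ref{th : averaged} applies to every affine map and hence to the minimiser defining $\Tv_r(d)$. Combining the two halves gives the stated $\max\{\beta_r,\beta'_r\}$.
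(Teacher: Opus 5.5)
Your proposal is correct and follows essentially the same route as the paper. The rate $\beta_r$ comes from Corollary~\ref{cor : rate}. For $\beta'_r$ you fix a decomposition with $s\geq1$ and some $u>0$, bound the coefficient in Theorem~\ref{th : averaged} by $P(u)\,u^{-j}$ using non-negativity of the coefficients, apply Stirling's formula to $\binom{N+1}{s(d+1)}$, and take $d$-th roots of the remaining factors, which is exactly what the paper does.
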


\medskip
For instance, $\beta'_6\approx4.23$ and $\beta'_{15}\approx1.33\cdot10^{4}$, while Table~\ref{tab : rates} gives $\beta_6\approx3.95$ and $\beta_{15}\approx1.16\cdot10^{4}$.
Moreover $\beta'_{16}\approx6182$, attained at $r=1\cdot13+3$, exceeds the rate $(\frac{16}{5})^{\frac{15}2}\approx6146$ of Theorem~\ref{th : VZ Hell}, so by Corollary~\ref{cor : averaged rate} the number $16$ can be removed from the list of exceptional prime powers above.

\medskip
The remaining sections are organised as follows.
Sections~\ref{sec : test map} and~\ref{sec : fixed sphere} develop the framework, and Section~\ref{sec : proofs} contains the proofs of the results stated above.
In Section~\ref{sec : test map} we recall the test map, the zero-avoiding complex and the covering scheme from~\cite{Blagojevic2025}, and introduce the incidence space of a sub-complex of the deleted join.
Section~\ref{sec : fixed sphere} lets an elementary abelian group act on the labels; there we construct the equivariant Poincar\'e dual of a fixed point of the sphere and use it to prove the $G$-box criterion, Theorem~\ref{th : fixed sphere}, the topological core of the paper.
Finally, in Section~\ref{sec : proofs} the hypothesis of this criterion is verified for the complexes given by difference constraints, which proves Theorem~\ref{th : difference}, and a double counting of Tverberg partitions with translated difference complexes proves Theorem~\ref{th : main}; averaging this double counting over all matchings proves Theorem~\ref{th : averaged}.

%----------------------------------------------------------------------------------------------
%----------------------------------------------------------------------------------------------
\bigskip
\section{The test map and the incidence space}
\label{sec : test map}
\bigskip
%----------------------------------------------------------------------------------------------
%----------------------------------------------------------------------------------------------

In this section we recall the framework of zero-avoiding complexes, covering schemes and test maps developed in~\cite{Blagojevic2025} and rephrase it in a way which fits Tverberg type questions.
 For this we make some necessary preparations.

\medskip
Given a simplicial complex $X$ on a vertex set $V$ and a subset $U\subseteq V$, we denote by 
\[X[U]:=\{F\in X : F\subseteq U\}\]
the {\em induced sub-complex} of $X$ on $U$.

\medbreak
Given a field $\FF$ and $\tau\geq-1$, a space, or in particular a simplicial complex, $X$ is {\em homologically $\tau$-connected} over the field $\FF$ if $\widetilde H_i(X;\FF)=0$ for all $-1\leq i\leq\tau$. So in particular $X\neq\emptyset$.

\medskip
Let us fix integers $r\geq 2$ and $d\geq 1$, set $N:=(d+1)(r-1)$ as before, and let $n\geq0$ be an integer; in our applications only the case $n\geq N$ will be relevant.
Suppose $a\colon\Delta_n\longrightarrow\R^d$ is a fixed affine map.
The set of vertices of the simplex $\Delta_n$ is denoted by $\vertex(\Delta_n)=\{ p_0,\dots,p_n\}$.

%----------------------------------------------------------------------------------------------
\medskip
\subsection{Deleted join configuration space test map scheme}
\label{subsec : test map}
%----------------------------------------------------------------------------------------------
For this scheme we are considering the $r$-fold join $(\Delta_n)^{*r}$ of the simplex $\Delta_n$ and we denote the set of vertices of the $i$-th copy of $\Delta_n$ in the join  by $\{p_{0,i},\dots,p_{n,i}\}$.
The main configuration space is the $r$-fold $2$-wise deleted join:

\[
	\K:=(\Delta_n)^{*r}_{\Delta(2)}\ \cong\ [r]^{*(n+1)}=Q_0*\cdots*Q_n,
\]
where $Q_j:=\{p_{j,1},\dots,p_{j,r}\}$. For the isomorphism applied, see~\cite[Sec.\,6.3]{Matousek2008} and \cite[Lem.\,3.8.(1)]{BlagojevicZiegler2017}.
The complexes $\K$ and $(\Delta_n)^{*r}$ have the same vertex set $V:=\{p_{j,i} : 0\leq j\leq n,\ 1\leq i\leq r\}$.
A face of $\K$ is a set $J\subseteq V$ with $|J\cap Q_j|\leq1$ for all $j$, and it is the face $J=F_1*\cdots*F_r$, where $F_i$ is the face of $\Delta_n$ spanned by the vertices $p_j$ with $p_{j,i}\in J$.
Thus a face of $\K$ is a partial assignment of labels $i\in[r]$ to the vertices of $\Delta_n$, and a facet of $\K$ is an assignment of labels to all vertices.
A face $F_1*\cdots*F_r$ of $\K$ is called a {\em Tverberg face} of the affine map $a$ if $(F_1,\dots,F_r)$ is an ordered Tverberg partition of $a$.
Figure~\ref{fig : deleted join} illustrates these notions.

\begin{figure}[ht]
\centering
\begin{tikzpicture}[x=5.6mm,y=5.6mm,
	pt/.style={circle,fill=black,inner sep=1.1pt},
	lab/.style={font=\scriptsize}]
	\foreach \x in {0,1,2,3,5.5,6.5,7.5,10,11,12,13}{
		\foreach \y in {0,-1,-2,-4,-6,-7}{\node[pt] at (\x,\y) {};}
		\foreach \y in {-3,-5}{\node[lab] at (\x,\y+0.1) {$\vdots$};}}
	\foreach \y in {0,-1,-2,-4,-6,-7}{
		\node[lab] at (4.25,\y) {$\cdots$};
		\node[lab] at (8.75,\y) {$\cdots$};}
	\node[lab] at (0,1) {$0$}; \node[lab] at (1,1) {$1$};
	\node[lab] at (6.5,1) {$j$}; \node[lab] at (13,1) {$n$};
	\foreach \y/\l in {0/1,-1/2,-2/3,-4/i,-7/r}{\node[lab,anchor=east] at (-0.8,\y) {$\l$};}
	\draw (6.15,0.45) rectangle (6.85,-7.45);
	\draw (-0.4,-3.65) rectangle (13.4,-4.35);
	\node[lab] at (6.5,-8.1) {$Q_j=\{p_{j,1},\dots,p_{j,r}\}$};
	\node[lab,anchor=west] at (13.55,-4) {$\{p_{0,i},\dots,p_{n,i}\}$};
\end{tikzpicture}

\smallskip
{\small (a)\quad The vertex set $V$ of $\K$: the $j$-th column is the set $Q_j$, and the $i$-th row is the vertex set of the $i$-th copy of $\Delta_n$ in the join $(\Delta_n)^{*r}$.}

\bigskip
\begin{tikzpicture}[x=7mm,y=7mm,
	pt/.style={circle,fill=black,inner sep=1.1pt},
	sel/.style={circle,draw,thick,inner sep=2.6pt},
	lab/.style={font=\scriptsize}]
	\foreach \x in {0,...,4}{
		\node[lab] at (\x,0.75) {$\x$};
		\foreach \y in {1,2,3}{\node[pt] at (\x,-\y+1) {};}}
	\foreach \y in {1,2,3}{\node[lab,anchor=east] at (-0.5,-\y+1) {$\y$};}
	\foreach \x/\y in {0/3,1/2,2/1,3/2,4/3}{\node[sel] at (\x,-\y+1) {};}
	\begin{scope}[xshift=6.3cm,x=19mm,y=7mm]
		\draw[->] (-1.4,-1) -- (2.9,-1);
		\foreach \z/\j in {-1/0,0.25/1,0.75/2,1.5/3,2.5/4}{
			\node[pt] at (\z,-1) {};
			\node[lab,below=1pt] at (\z,-1) {$a(p_{\j})$};}
		\draw[densely dotted] (0.75,-1) -- (0.75,1.1);
		\node[pt] at (0.75,-0.35) {};
		\draw[line width=1.3pt,|-|] (0.25,0.3) -- (1.5,0.3);
		\draw[line width=1.3pt,|-|] (-1,0.95) -- (2.5,0.95);
		\node[lab,anchor=west] at (0.85,-0.35) {$a(F_1)$};
		\node[lab,anchor=west] at (1.6,0.3) {$a(F_2)$};
		\node[lab,anchor=west] at (2.6,0.95) {$a(F_3)$};
	\end{scope}
\end{tikzpicture}

\smallskip
{\small (b)\quad $r=3$, $d=1$, $n=N=4$: a facet $J$ of $\K$ (ringed vertices, one in each column), an affine map $a\colon\Delta_4\longrightarrow\R$, and the images $a(F_1)$, $a(F_2)$, $a(F_3)$ drawn above the line.}
\caption{\small The deleted join $\K=Q_0*\cdots*Q_n$.
(a) A set $J\subseteq V$ is a face of $\K$ if and only if it has at most one vertex in each column, and then $p_{j,i}\in J$ means that the vertex $p_j$ of $\Delta_n$ receives the label $i$.
(b) The facet $J=F_1*F_2*F_3$ with $F_1=\conv\{p_2\}$, $F_2=\conv\{p_1,p_3\}$ and $F_3=\conv\{p_0,p_4\}$.
Since $a(F_1)\cap a(F_2)\cap a(F_3)=\{a(p_2)\}$, it is a Tverberg face of $a$.}
\label{fig : deleted join}
\end{figure}

\medskip
Let $W_r:=\{(y_1,\dots,y_r)\in\R^r : y_1+\cdots+y_r=0\}$ and $\E:=W_r^{\oplus(d+1)}$.
Elements of $\E$ can be presented as $(d+1)\times r$ matrices $y=(y_{\kappa,i})_{0\leq\kappa\leq d,\,1\leq i\leq r}$ whose rows sum to zero.
In this case we assume the standard inner product given by $\langle y,y'\rangle=\sum_{\kappa,i}y_{\kappa,i}\,y'_{\kappa,i}$.
The unit sphere, with respect to this inner product, will be denoted by $S(\E)$.
Then $\dim(\E)=(d+1)(r-1)=N$ and $S(\E)\cong S^{N-1}$.

\medskip
The {\em test map} associated to the affine map $a\colon\Delta_n\longrightarrow
\R^d $ is the map $A\colon(\Delta_n)^{*r}\longrightarrow\E$ given by
\begin{equation}
	\label{eq : test map}
	t_1\cdot x_1+\cdots+t_r\cdot x_r\ \longmapsto\
	\Big(t_i\cdot \widehat{a}(x_i)-\frac1r\sum_{\ell=1}^rt_\ell\cdot\widehat{a}(x_\ell)\Big)_{1\leq i\leq r},
\end{equation}
or in expanded form by 
\begin{multline}
	\label{def of A}
	t_1\cdot x_1+\cdots+t_r\cdot x_r \ \longmapsto  \\
	\begin{pmatrix}
		t_1-\frac{1}{r},                                             & \dots\ , & t_r-\frac{1}{r}                                             \\[4pt]
		t_1\cdot a_1(x_1)- \frac{1}{r}\sum_{i=1}^rt_i\cdot a_1(x_i), & \dots\ , & t_r\cdot a_1(x_r)- \frac{1}{r}\sum_{i=1}^rt_i\cdot a_1(x_i) \\[4pt]
		\dots                                                        & \dots    & \dots                                                       \\[4pt]
		t_1\cdot a_d(x_1)- \frac{1}{r}\sum_{i=1}^rt_i\cdot a_d(x_i), & \dots\ , & t_r\cdot a_d(x_r)- \frac{1}{r}\sum_{i=1}^rt_i\cdot a_d(x_i)
	\end{pmatrix}.
\end{multline}
We used the short notation $\widehat{a}(x):=(1,a_1(x),\dots,a_d(x))^{\top}\in\R^{d+1}$, where $a_\kappa\colon\Delta_n\longrightarrow \R$ is the $\kappa$-th coordinate function of $a$.

\medskip
\begin{lemma}
	\label{lem : A is affine}
	The test map $A\colon(\Delta_n)^{*r}\longrightarrow\E$ is affine on the simplex $(\Delta_n)^{*r}$.
	In particular $A(F)=\conv(A(\vertex(F)))$ for every face $F$ of $(\Delta_n)^{*r}$.
\end{lemma}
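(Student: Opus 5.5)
The plan is to check the claim on one simplex of the join and then read off the image statement from linearity. I use the standard description of the join: a point of $(\Delta_n)^{*r}$ is a formal combination $t_1\cdot x_1+\cdots+t_r\cdot x_r$ with $t_i\geq0$, $\sum t_i=1$ and $x_i\in\Delta_n$. Here $x_i$ is irrelevant when $t_i=0$. Since $\Delta_n$ is a simplex, the join $(\Delta_n)^{*r}$ is itself a simplex on the vertex set $V=\{p_{j,i}\}$. Its barycentric coordinates are $\lambda_{j,i}:=t_i\,\mu_{j}(x_i)$, where $\mu_j(x_i)$ are the barycentric coordinates of $x_i$ in $\Delta_n$. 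Conversely, $t_i=\sum_j\lambda_{j,i}$, and $x_i=\sum_j(\lambda_{j,i}/t_i)\,p_j$ whenever $t_i>0$.

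The key step is to express each entry of $A$ in the coordinates $\lambda_{j,i}$. Because $\widehat a$ is affine on $\Delta_n$, we have
\[
t_i\cdot\widehat a(x_i)=\sum_{j=0}^n t_i\,\mu_j(x_i)\,\widehat a(p_j)=\sum_{j=0}^n\lambda_{j,i}\,\widehat a(p_j).
\]
This expression is linear in the $\lambda$'s. It is also well defined when $t_i=0$, since then it equals $0$ whatever $x_i$ is. This is exactly the point where the formula \eqref{eq : test map} needs $t_i$ multiplying $\widehat a(x_i)$, and it is the only place needing care. Subtracting the average $\frac1r\sum_\ell t_\ell\,\widehat a(x_\ell)$ is again linear in the $\lambda$'s. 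Hence
\[
A\Big(\sum_{j,i}\lambda_{j,i}\,p_{j,i}\Big)=\sum_{j,i}\lambda_{j,i}\,A(p_{j,i}),
\]
where, reading off the same formula, $A(p_{j,i})$ is the matrix whose $i$-th column is $\widehat a(p_j)$, with $\frac1r\widehat a(p_j)$ subtracted from every column. So $A$ is the affine extension of its values on the vertices, which proves that $A$ is affine on $(\Delta_n)^{*r}$.

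The second claim then follows formally. A face $F$ of $(\Delta_n)^{*r}$ is the set of convex combinations of its vertices. An affine map sends such a combination to the same combination of the images, so $A(F)=\conv(A(\vertex(F)))$.

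The main obstacle is only the bookkeeping at the boundary points $t_i=0$. I handle it by working throughout with the products $t_i\,\widehat a(x_i)$ rather than with $x_i$ itself.
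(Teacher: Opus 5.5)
Your proposal is correct and is essentially the paper's proof: you take the barycentric coordinates $\lambda_{j,i}=t_i\,\mu_j(x_i)$ of the join and use the affineness of $a$ to write $t_i\,\widehat a(x_i)=\sum_j\lambda_{j,i}\,\widehat a(p_j)$. From this, every entry of \eqref{eq : test map} is linear in these coordinates, and the image statement follows. Your extra remark about well-definedness at $t_i=0$ and your description of $A(p_{j,i})$ are both correct.
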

\begin{proof}
	The barycentric coordinates of the point $t_1x_1+\cdots+t_rx_r$ of the simplex $(\Delta_n)^{*r}$ are $\mu_{j,i}=t_i\,x_{i,j}$, where $x_i=\sum_jx_{i,j}\,p_j$.
	Since $a$ is affine, $t_i\,\widehat a(x_i)=\sum_j\mu_{j,i}\,\widehat a(p_j)$, so every entry of~\eqref{eq : test map} is a linear function of the barycentric coordinates $\mu_{j,i}$.
\end{proof}

\medskip
For $1\leq i\leq r$ let $\varepsilon_i:=e_i-\frac1r(1,\dots,1)\in W_r$, and set $w_{j,i}:=A(p_{j,i})$.
With $y\in\E$ and $1\leq i\leq r$ we associate the affine function
\begin{equation}
	\label{eq : affine functions}
	f^y_i\colon\R^d\longrightarrow\R,\qquad z\longmapsto y_{0,i}+\sum_{\kappa=1}^dy_{\kappa,i}\,z_\kappa .
\end{equation}
In the following, $\delta_{i\ell}$ denotes the Kronecker delta.
\medskip
\begin{lemma}
	\label{lem : vertices of A}
	Let $1\leq i\leq r$, $0\leq j\leq n$, and $1\leq \ell\leq r$ be integers.
	\begin{compactenum}[\rm (1)]
		\item The $\ell$-th column of $w_{j,i}$ is $(\delta_{i\ell}-\frac1r)\,\widehat a(p_j)$, or in other words, $w_{j,i}=A(p_{j,i})=\varepsilon_i\otimes\widehat a(p_j)$. In particular $w_{j,i}\neq0$.
		\item For every $y\in\E$ it holds that $\langle y,w_{j,i}\rangle=f^y_i(a(p_j))$ and $f^y_1+\cdots+f^y_r=0$.
	\end{compactenum}
\end{lemma}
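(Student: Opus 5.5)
The plan is to read both parts off the definition~\eqref{eq : test map} by evaluating at the vertex $p_{j,i}$ of the join. This vertex is the point $t_1x_1+\cdots+t_rx_r$ with $t_i=1$, $t_\ell=0$ for $\ell\neq i$, and $x_i=p_j$. The $\ell$-th column of $A(p_{j,i})$ is then
\[
	t_\ell\,\widehat a(x_\ell)-\tfrac1r\sum_m t_m\,\widehat a(x_m)=\delta_{i\ell}\,\widehat a(p_j)-\tfrac1r\,\widehat a(p_j).
\]
Here the terms with $t_\ell=0$ vanish independently of the undefined point $x_\ell$, which is the usual join convention. This is exactly $(\delta_{i\ell}-\frac1r)\widehat a(p_j)$. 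Since the $\ell$-th entry of $\varepsilon_i$ is $\delta_{i\ell}-\frac1r$, the identity $w_{j,i}=\varepsilon_i\otimes\widehat a(p_j)$ follows, viewing $\E=W_r\otimes\R^{d+1}$ as matrices with rows indexed by $\kappa$ and columns by $\ell$.

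Nonvanishing is immediate from two facts. First, $\varepsilon_i\neq0$ because $r\geq2$, since its $i$-th entry is $1-\frac1r\neq0$. Second, the $0$-th coordinate of $\widehat a(p_j)$ is $1$. Hence the $(0,i)$ entry of $w_{j,i}$ equals $1-\frac1r\neq0$.

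For (2), compute the inner product entrywise:
\[
	\langle y,w_{j,i}\rangle=\sum_{\kappa,\ell}y_{\kappa,\ell}\,(\delta_{i\ell}-\tfrac1r)\,\widehat a(p_j)_\kappa=\sum_\kappa y_{\kappa,i}\,\widehat a(p_j)_\kappa-\tfrac1r\sum_\kappa\widehat a(p_j)_\kappa\sum_\ell y_{\kappa,\ell}.
\]
Every row of $y\in\E$ sums to zero, so the second term vanishes. The first term is $y_{0,i}+\sum_{\kappa\ge1}y_{\kappa,i}\,a_\kappa(p_j)=f^y_i(a(p_j))$ by~\eqref{eq : affine functions}.

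Finally, $\sum_i f^y_i(z)=\sum_i y_{0,i}+\sum_\kappa z_\kappa\sum_i y_{\kappa,i}=0$, again because the rows of $y$ sum to zero.

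There is no real obstacle here; the only point needing care is the join convention that $t_\ell\,\widehat a(x_\ell)=0$ when $t_\ell=0$.
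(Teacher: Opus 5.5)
Your proposal is correct and follows essentially the same route as the paper. You evaluate \eqref{eq : test map} at $t_i=1$ for (1), and for (2) you expand the inner product and use that the rows of $y$ sum to zero. You also spell out the non-vanishing via the $(0,i)$ entry $1-\frac1r\neq0$, which the paper leaves implicit.
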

\begin{proof}
	(1) The claim follows from~\eqref{eq : test map} by taking $t_i=1$.
	(2) Since the rows of $y$ sum to zero, $\langle y,w_{j,i}\rangle=\sum_\kappa\widehat a(p_j)_\kappa\sum_\ell y_{\kappa,\ell}(\delta_{i\ell}-\frac1r)=\sum_\kappa\widehat a(p_j)_\kappa\,y_{\kappa,i}$, and $\sum_if^y_i=0$ for the same reason.
\end{proof}

\medskip
The next lemma is the classical deleted join configuration space test map scheme, see~\cite[Thm.\,3.9]{BlagojevicZiegler2017}, which is also sometimes called Sarkaria's tensor trick~\cite{Sarkaria1992}.

\medskip
\begin{lemma}[The classical configuration space test map scheme]
	\label{lem : CSTM}
	Suppose $F_1,\dots,F_r$ are pairwise disjoint faces of $\Delta_n$.
	Then $0\in A(F_1*\cdots*F_r)$ if and only if $a(F_1)\cap\cdots\cap a(F_r)\neq\emptyset$.
	Consequently, a face $J$ of $\K$ is a Tverberg face of $a$ if and only if $0\in\conv(A(J))$.
\end{lemma}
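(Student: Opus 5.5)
We prove the first assertion by computing the image $A(F_1*\cdots*F_r)$ explicitly and reading off its defining equations. The second assertion then follows at once from Lemma~\ref{lem : A is affine}.

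A point of $F_1*\cdots*F_r$ has the form $t_1x_1+\cdots+t_rx_r$ with $x_i\in F_i$, $t_i\geq0$ and $\sum_it_i=1$; faces that are empty are allowed, with $t_i=0$ forced. By~\eqref{eq : test map} its image is zero exactly when all $r$ columns $t_i\,\widehat a(x_i)\in\R^{d+1}$ coincide.

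Suppose $A(t_1x_1+\cdots+t_rx_r)=0$. Comparing the first rows gives $t_i=\frac1r$ for every $i$, since the first coordinate of $\widehat a(x_i)$ is $1$. In particular every $F_i$ is nonempty. Dividing the remaining rows by $t_i=\frac1r$ gives $a(x_1)=\cdots=a(x_r)$, and this common point lies in $a(F_1)\cap\cdots\cap a(F_r)$.

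Conversely, suppose $z\in a(F_1)\cap\cdots\cap a(F_r)$. Choose $x_i\in F_i$ with $a(x_i)=z$ and take $t_i=\frac1r$. Then all columns equal $\frac1r\,\widehat a(z)$, so the point $\frac1r x_1+\cdots+\frac1r x_r$ is mapped to $0$.

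Pairwise disjointness is needed only so that $F_1*\cdots*F_r$ is a face of the deleted join. There is no real obstacle here; the only point requiring care is the degenerate case of empty faces, which the first-row comparison excludes.

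For the consequence, let $J=F_1*\cdots*F_r$ be a face of $\K$. Then the $F_i$ are pairwise disjoint, and Lemma~\ref{lem : A is affine} gives $A(J)=\conv(A(\vertex(J)))$. Hence $0\in\conv(A(J))$ holds if and only if $0\in A(F_1*\cdots*F_r)$. By the first part this is equivalent to $(F_1,\dots,F_r)$ being a Tverberg partition of $a$, that is, to $J$ being a Tverberg face.
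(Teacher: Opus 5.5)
Your proof is correct and follows the paper's argument essentially line by line. You reduce $A=0$ to equality of the columns $t_i\,\widehat a(x_i)$, use the first coordinate to force $t_i=\frac1r$ (which also rules out empty faces), take $\frac1r x_1+\cdots+\frac1r x_r$ for the converse, and get the second claim from Lemma~\ref{lem : A is affine}.
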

\begin{proof}
	By~\eqref{eq : test map}, $A(t_1x_1+\cdots+t_rx_r)=0$ if and only if $t_1\,\widehat a(x_1)=\dots=t_r\,\widehat a(x_r)$.
	Comparing first coordinates gives $t_1=\cdots=t_r=\frac1r$; in particular every face $F_i$ is non-empty, since a point of the join with $F_i=\emptyset$ has $t_i=0$.
	Then $\widehat a(x_1)=\cdots=\widehat a(x_r)$, that is, $a(x_1)=\cdots=a(x_r)$ with $x_i\in F_i$.
	Conversely, if $a(x_1)=\cdots=a(x_r)$ with $x_i\in F_i$, then $\frac1r x_1+\cdots+\frac1r x_r$ is mapped to $0$.
	The second claim follows from Lemma~\ref{lem : A is affine}.
\end{proof}

%----------------------------------------------------------------------------------------------
\medskip
\subsection{Zero-avoiding complexes}
\label{subsec : zero-avoiding}
%----------------------------------------------------------------------------------------------

Recall that $a\colon\Delta_n\longrightarrow\R^d$ is an affine map we have fixed and that $A\colon (\Delta_n)^{*r}\longrightarrow \E$ is its associated test map.
According to Lemma~\ref{lem : vertices of A}(1) we have that $0\notin A(V)$.
Now, to the affine map $A$ we associate the so-called {\em zero-avoiding complex}
\[
	\CC_A:=\{J\subseteq V : 0\notin\conv(A(J))\}
\]
which is a simplicial complex on the vertex set $V$.
For more details on this construction consult~\cite[Sec.\,2.1]{Blagojevic2025}.
From Lemma~\ref{lem : CSTM} it follows that an arbitrary sub-complex $\LL$ of $\K$ contains no Tverberg face of $a$ if and only if $\LL\subseteq\CC_A$.

\medskip
Next, for $y\in S(\E)$ let
\[
	U_{a,y}:=\{v\in V : \langle y,A(v)\rangle\leq 0\}
\]
be the set of vertices in $V$ on which the linear functional $\langle y,\cdot\rangle$ is non-positive.
The index $a$ records that this set depends not only on $y$ but also, through the test map $A$, on the affine map $a$.

\medskip
\begin{lemma}
	\label{lem : locality}
	Let $y\in S(\E)$ and $0\leq j\leq n$.
	Then
	\[
		U_{a,y}\cap Q_j=\{p_{j,i} : f^y_i(a(p_j))\leq0\}\neq\emptyset .
	\]
	In particular, $U_{a,y}\cap Q_j$ depends only on the values of the $r$ affine functions $f^y_1,\dots,f^y_r$ at the single point $a(p_j)$.
\end{lemma}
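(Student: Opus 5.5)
The plan is to compute the inner product $\langle y, A(v)\rangle$ explicitly for each vertex $v=p_{j,i}\in Q_j$, and then to show that the resulting $r$ numbers sum to zero, which forces one of them to be non-positive.

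First, by definition of $U_{a,y}$, a vertex $p_{j,i}\in Q_j$ lies in $U_{a,y}$ if and only if $\langle y,A(p_{j,i})\rangle\leq0$. Since $A(p_{j,i})=w_{j,i}$, Lemma~\ref{lem : vertices of A}(2) gives $\langle y,w_{j,i}\rangle=f^y_i(a(p_j))$. This yields the set identity
\[
U_{a,y}\cap Q_j=\{p_{j,i} : f^y_i(a(p_j))\leq 0\}.
\]

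Second, for non-emptiness we use the other half of Lemma~\ref{lem : vertices of A}(2), namely $f^y_1+\cdots+f^y_r=0$ as functions on $\R^d$. Evaluating at the point $a(p_j)$ gives $\sum_{i=1}^r f^y_i(a(p_j))=0$. A sum of $r$ real numbers equal to zero cannot have all terms strictly positive, so some $i$ satisfies $f^y_i(a(p_j))\leq0$. The corresponding vertex $p_{j,i}$ then lies in $U_{a,y}\cap Q_j$. The assumption $y\in S(\E)$, i.e.\ $y\neq0$, is not even needed for this step.

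Finally, the locality statement is immediate from the displayed identity. The condition on $p_{j,i}$ involves only the values $f^y_1(a(p_j)),\dots,f^y_r(a(p_j))$ at the single point $a(p_j)$. There is no real obstacle in this argument; the only point needing care is correctly matching the vertex $p_{j,i}$ of $\K$ with the vector $w_{j,i}=\varepsilon_i\otimes\widehat a(p_j)$, which Lemma~\ref{lem : vertices of A}(1) already provides.
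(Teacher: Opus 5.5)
Your proof is correct and follows the paper's argument: the set identity comes from $\langle y,w_{j,i}\rangle=f^y_i(a(p_j))$ in Lemma~\ref{lem : vertices of A}(2), and non-emptiness from the fact that the $r$ values $f^y_i(a(p_j))$ sum to zero. Your side remark that $y\neq0$ is not needed for non-emptiness is also right.
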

\begin{proof}
	The description of $U_{a,y}\cap Q_j$ follows from Lemma~\ref{lem : vertices of A}(2). 
	The set $U_{a,y}\cap Q_j$ is non-empty because the values $f^y_i(a(p_j))$ sum up to zero.
\end{proof}

\medskip
Figure~\ref{fig : locality} illustrates Lemma~\ref{lem : locality} for $d=1$.
For every $d$, the vertex $p_{j,i}$ lies in $U_{a,y}$ if and only if $a(p_j)$ lies in the set $\{z\in\R^d : f^y_i(z)\leq0\}$, which is a closed half-space unless $f^y_i$ is constant, so $U_{a,y}$ records the position of the points $a(p_0),\dots,a(p_n)$ with respect to the zero sets of $f^y_1,\dots,f^y_r$.
For $d=1$ the zero set of a non-constant $f^y_i$ is a single point of $\R$, and the upper part of the figure shows the graphs of the affine functions $f^y_1,\dots,f^y_r$ over the line $\R$, drawn as lines in the plane. The point $a(p_j)$ lies in the half-line $\{z\in\R : f^y_i(z)\leq0\}$ if and only if the graph of $f^y_i$ is on or below the horizontal axis at $a(p_j)$.
The set $U_{a,y}$ itself is displayed in the lower part of the figure: the column $Q_j$ is drawn below the point $a(p_j)$, and its vertex $p_{j,i}$ is filled if and only if the graph of $f^y_i$ is on or below the axis at $a(p_j)$, that is, if and only if $p_{j,i}\in U_{a,y}$.

\begin{figure}[ht]
\centering
\begin{tikzpicture}[x=19mm,y=4.2mm,
	pt/.style={circle,fill=black,inner sep=1.1pt},
	hollow/.style={circle,draw,fill=white,inner sep=1.1pt},
	lab/.style={font=\scriptsize}]
	\draw[->] (-1.5,0) -- (3.0,0);
	\draw[thick] (-1.5,-2.5) -- (2.6,1.6) node[lab,anchor=west] {$f^y_1$};
	\draw[thick,dash pattern=on 4pt off 2pt] (-1.25,3.5) -- (2.25,-3.5) node[lab,anchor=west] {$f^y_2$};
	\draw[thick,densely dotted] (-1.5,-1.5) -- (2.6,2.6) node[lab,anchor=west] {$f^y_3$};
	\foreach \z/\j in {-1/0,0.25/1,0.75/2,1.5/3,2.5/4}{
		\draw[very thin,gray] (\z,3.6) -- (\z,-9.2);
		\node[pt] at (\z,0) {};
		\node[lab,fill=white,inner sep=1pt] at (\z,-4.3) {$a(p_{\j})$};}
	\foreach \y in {1,2,3}{\node[lab,anchor=east] at (-1.4,-4.6-1.4*\y) {$\y$};}
	\foreach \z/\a/\b/\c/\j in {-1/1/0/1/0,0.25/1/0/0/1,0.75/1/1/0/2,1.5/0/1/0/3,2.5/0/1/0/4}{
		\foreach \i/\m in {1/\a,2/\b,3/\c}{
			\ifnum\m=1 \node[pt,inner sep=1.6pt] at (\z,-4.6-1.4*\i) {};
			\else \node[hollow,inner sep=1.6pt] at (\z,-4.6-1.4*\i) {}; \fi}
		\node[lab,fill=white,inner sep=1pt] at (\z,-10) {$Q_{\j}$};}
\end{tikzpicture}
\caption{\small Lemma~\ref{lem : locality} for $r=3$, $d=1$, $n=4$ and the affine map $a$ of Figure~\ref{fig : deleted join}(b).
Up to a positive factor, which changes neither the functions' signs nor $U_{a,y}$, the point $y\in S(\E)$ is the matrix with rows $(-1,1,0)$ and $(1,-2,1)$, and so $f^y_1(z)=z-1$, $f^y_2(z)=1-2z$ and $f^y_3(z)=z$.
Below the point $a(p_j)$ the column $Q_j$ is drawn, and $p_{j,i}\in U_{a,y}$ (filled) if and only if $f^y_i(a(p_j))\leq0$.
Since $f^y_1+f^y_2+f^y_3=0$, at every point at least one of the functions is non-positive, and so every column meets $U_{a,y}$.}
\label{fig : locality}
\end{figure}

%----------------------------------------------------------------------------------------------
\medskip
\subsection{The covering scheme}
\label{subsec : covering scheme}
%----------------------------------------------------------------------------------------------
The sets $U_{a,y}$ we just introduced appear in a natural way in the covering scheme of~\cite[Sec.\,3]{Blagojevic2025}, which we now recall.
While the scheme is not directly used in the proofs of the main results, it does explain the role of the sets $U_{a,y}$ and motivates the incidence space which follows in  Section~\ref{subsec : incidence space}.

For a simplicial complex $X$ on the vertex set $V$ the simplicial complex defined by
\[
	X^*:= \{ F\subseteq V :V-F \notin X\}
\]
is called the {\em Alexander dual} of $X$.
Moreover, $(X^*)^*=X$.
For the case of the zero-avoiding complex $\CC_A$ of the affine map $A$ we have that the corresponding Alexander dual complex is given by
\begin{equation}\label{def of dual of zero-avoiding complex}
\CC_A^*:=\{F\subseteq V : V-F\notin\CC_A\}=\{F\subseteq V : 0\in\conv(A(V-F))\}.
\end{equation} 
With this definition, the combinatorial version of the Alexander duality says the following, see~\cite[Thm.\,1.1]{BjornerTancer2009}.
\begin{theorem}
	\label{th : combinatorial Alexander duality}
	For a simplicial complex $X$ on the finite vertex set $V$ which is not the simplex $\Delta_V$ there are isomorphisms
	\begin{equation}
		\label{iso : Alexander 01}
		\widetilde{H}_i(X;\FF)\cong  \widetilde{H}^{|V|-3-i}(X^*;\FF)
		\qquad\text{and}\qquad
		\widetilde{H}_i(X^*;\FF)\cong  \widetilde{H}^{|V|-3-i}(X;\FF)
	\end{equation}
	for every $i\in\Z$ and any field $\FF$.
\end{theorem}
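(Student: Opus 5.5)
The statement is the combinatorial Alexander duality of Björner and Tancer, and my plan is to derive it from the classical Alexander duality on a sphere. Set $m:=|V|$ and realise the full simplex $\Delta_V$ with boundary $\partial\Delta_V\cong S^{m-2}$. Since $X\neq\Delta_V$, the complex $X$ is a subcomplex of $\partial\Delta_V$, and the classical duality gives
\[
\widetilde H_i(X;\FF)\cong \widetilde H^{m-3-i}(S^{m-2}\setminus X;\FF).
\]
The first task is to identify $S^{m-2}\setminus X$ up to homotopy with $X^*$. Let $\mathrm{sd}$ denote the barycentric subdivision of $\partial\Delta_V$; its vertices are the proper nonempty faces $F\subsetneq V$. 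The complement of $X$ deformation retracts onto the full subcomplex of $\mathrm{sd}$ spanned by the faces $F\notin X$. This is the standard fact that the complement of a full subcomplex retracts onto the complementary full subcomplex, applied after subdividing so that $X$ becomes full.

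Next I would apply the order-reversing bijection $F\mapsto V-F$. It carries the poset of proper nonempty non-faces of $X$ onto the poset of proper nonempty $G\subseteq V$ with $V-G\notin X$, that is, onto the face poset of $X^*$ with $\emptyset$ and $V$ removed. Two cases need care here. First, $\emptyset\in X^*$ exactly when $V\notin X$, which holds by hypothesis. Second, $V\in X^*$ would need $\emptyset\notin X$. If $X=\emptyset$ as the void complex, I would handle it separately: then $X^*=\Delta_V$ and both sides vanish, except in the conventions for $\widetilde H_{-1}$, which need checking. Since order complexes are invariant under reversing the order, the order complex of this poset is $\mathrm{sd}(X^*)\cong X^*$. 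This yields the first isomorphism.

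For the second isomorphism I would apply the first one to $X^*$, using $(X^*)^*=X$. This requires $X^*\neq\Delta_V$, which is equivalent to $\emptyset\in X$, i.e. to $X$ being nonempty. Alternatively, one can use universal coefficients over the field $\FF$: homology and cohomology of finite complexes are dual vector spaces of the same dimension. Applying this in reverse gives $\widetilde H_i(X^*)\cong\widetilde H^{m-3-i}(X)$.

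I expect the main obstacle to be the degenerate cases: $X$ empty versus void, $\widetilde H_{-1}$, and $m$ small, where $S^{m-2}$ may be $S^{-1}=\emptyset$ or $S^0$. These must be checked directly against the reduced-homology conventions. The homotopy equivalence of the complement with the complementary full subcomplex is standard, so I would cite it rather than reprove it. The paper itself simply quotes \cite[Thm.\,1.1]{BjornerTancer2009}.
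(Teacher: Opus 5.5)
Your argument is correct, but it follows a different route from the source the paper relies on. The paper gives no proof of its own; it quotes Björner and Tancer, whose proof deliberately avoids topology. It works directly with the augmented (co)chain complexes of $X$ and $X^*$ and with the complementation $F\mapsto V-F$, which matches non-faces of $X$ with faces of $X^*$. That argument holds over any coefficient ring and treats $\{\emptyset\}$, $\widetilde H_{-1}$ and small $|V|$ uniformly.

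You instead derive the statement from classical Alexander duality in $|\partial\Delta_V|\cong S^{|V|-2}$, together with the homotopy equivalence $|\partial\Delta_V|\setminus|X|\simeq|X^*|$. You obtain that equivalence correctly:
\begin{itemize}
\item $\mathrm{sd}(X)$ is a full subcomplex of $\mathrm{sd}(\partial\Delta_V)$;
\item its complement retracts onto the full subcomplex on the proper nonempty non-faces of $X$;
\item complementation identifies the order complex of these non-faces with $\mathrm{sd}(X^*)$, using $V\notin X^*$ for non-void $X$.
\end{itemize}
Your route gives more than the statement: a homotopy-level identification of $X^*$ with the complement of $X$ in a sphere. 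The price is that it rests on a substantial theorem and needs separate checks wherever the classical theorem does not apply.

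A few small points to tighten. First, if you use the more common form $\widetilde H_i(S^n\setminus K)\cong\widetilde H^{n-1-i}(K)$, your homotopy equivalence gives the second isomorphism directly, for any coefficients. The first isomorphism then follows by applying the second to $X^*$ and using $(X^*)^*=X$. So the field is needed only to reverse the direction in the form you wrote. That reversal is legitimate, because the complement is homotopy equivalent to the finite complex $X^*$.

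Second, your list of degenerate cases covers $|X|=\emptyset$ and small $|V|$ but misses $X=\partial\Delta_V$. There the complement is empty, so classical duality does not apply. In that case $X^*=\{\emptyset\}$, and both sides equal $\FF$ exactly for $i=|V|-2$.

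Third, if void complexes are admitted, your claim that both sides vanish for void $X$ requires $V\neq\emptyset$. For $V=\emptyset$ the only admissible $X$ is void, $X^*=\{\emptyset\}$, and the statement fails for $i=-2$. So $V\neq\emptyset$ is an implicit hypothesis, which is harmless for the paper.
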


\medskip
Now, let $\LL\subseteq\K$ be a sub-complex of the deleted join.
Every face of the join $\CC_A^**\LL$ is of the form $I*J$ with $I\in\CC_A^*$ and $J\in\LL$, where $I$ and $J$ are subsets of two disjoint copies of $V$.
If $\LL$ contains no Tverberg face of the affine map $a$, then
\begin{equation}
	\label{eq : covering scheme}
	\CC_A^**\LL\ =\ \bigcup_{v\in V}\ \CC_A^*[V-\{v\}]*\LL[V-\{v\}].
\end{equation}
This equality is the so-called covering scheme of~\cite[Lem.\,3.2]{Blagojevic2025}.
Indeed, identifying both copies of $V$ with $V$, a face $I*J$ which lies in none of these sub-complexes satisfies $I\cup J=V$.
Then $V-I\subseteq J$, and $I\in\CC_A^*$ gives $0\in\conv(A(V-I))\subseteq\conv(A(J))$, so $J\in\LL$ is a Tverberg face by Lemma~\ref{lem : CSTM}.
For $\emptyset\neq U\subsetneq V$ the intersection of the members of the cover~\eqref{eq : covering scheme} indexed by (missing) vertices $v\in V-U$ is the join $\CC_A^*[U]*\LL[U]$.

\medskip
Now, for finite simplicial complexes $K_1$ and $K_2$ and a field $\FF$ we have the K\"unneth formula for joins
\begin{equation}
	\label{eq : Kunneth for joins}
	\widetilde H_{i+1}(K_1*K_2;\FF)\cong\bigoplus_{i_1+i_2=i}\widetilde H_{i_1}(K_1;\FF)\otimes\widetilde H_{i_2}(K_2;\FF).
\end{equation}
By this formula the join $\CC_A^*[U]*\LL[U]$ is acyclic, that is, has trivial reduced homology over $\FF$, whenever $\CC_A^*[U]$ is acyclic.
We now explain that this is the case for all subsets $\emptyset\neq U\subsetneq V$ except those of the form $U_{a,y}$ for some $y\in S(\E)$.
In~\cite[Sec.\,2]{Blagojevic2025} the vectors $A(v)$ are assumed to be unit vectors; normalising them changes neither $\CC_A$, $\CC_A^*$ nor the sets $U_{a,y}$, so the results of~\cite{Blagojevic2025} quoted below apply.
Following~\cite[Sec.\,2.4]{Blagojevic2025}, for $U\subseteq V$ consider
\begin{align*}
	N(V,U)&:=\{y\in S(\E) : \langle y,A(v)\rangle>0\ \text{for}\ v\in V-U\ \text{and}\ \langle y,A(u)\rangle\leq0\ \text{for}\ u\in U\}\\
	&\phantom{:}=\{y\in S(\E) : U_{a,y}=U\}.
\end{align*}
Let $\emptyset\subsetneq U\subsetneq V$.
If $0\in\conv(A(V-U))$, then every subset of $U$ lies in $\CC_A^*$, and $\CC_A^*[U]$ is a simplex~\cite[Lem.\,2.12]{Blagojevic2025}.
Otherwise, the open set $B:=\{y\in S(\E) : \langle y,A(v)\rangle>0\ \text{for}\ v\in V-U\}$ is a non-empty open ball of dimension $N-1$, and the Alexander dual of $\CC_A^*[U]$ inside the boundary of the simplex on $U$ is homotopy equivalent to $B-N(V,U)$~\cite[Lem.\,2.14]{Blagojevic2025}.
Hence, if $N(V,U)=\emptyset$, this dual is contractible and $\CC_A^*[U]$ is acyclic.
In other words, $\CC_A^*[U]$ can have non-trivial reduced homology only if $U=U_{a,y}$ for some $y\in S(\E)$, and under further assumptions on $U$ its homology is computed from the topology of $N(V,U)$ in~\cite[Lem.\,2.21]{Blagojevic2025}.
So only the sets $U_{a,y}$ contribute to the covering scheme, and each of them only through the complex $\LL[U_{a,y}]$.
By Lemma~\ref{lem : locality} these sets are local: $U_{a,y}$ meets every column $Q_j$, and $U_{a,y}\cap Q_j$ depends only on the values of the functions $f^y_i$ at the point $a(p_j)$.

\medskip
As an illustration, we show how the covering scheme yields Tverberg's theorem, Theorem~\ref{th : Tverberg's original}, for every $n\geq N$.
Suppose that $\K$ contains no Tverberg face of $a$, that is, $\LL=\K$ in~\eqref{eq : covering scheme}.
Then the $|V|$-element family of sub-complexes $\{\CC_A^*[V-\{v\}]*\K[V-\{v\}] : v\in V\}$ covers $\CC_A^**\K$.
Recall that $0=\frac1r(w_{j,1}+\cdots+w_{j,r})\in\conv(A(Q_j))$ for every $j$, by Lemma~\ref{lem : vertices of A}(1).
For every proper subset of the index set $V$, the intersection of the corresponding members is $\CC_A^*[U]*\K[U]$ with $U\neq\emptyset$.
It is non-empty, since every $u\in U$ is a vertex of $\CC_A^*$.
Indeed, the set $V-\{u\}$ contains a whole column $Q_j$, and so $0\in\conv(A(V-\{u\}))$.
The intersection of all members is empty.
Hence the nerve of the cover is the boundary of the simplex on the vertex set $V$, a sphere of dimension $|V|-2$.
We derive a contradiction from the homological Nerve theorem of Meshulam~\cite[Thm.\,2.1]{Meshulam2001}: if a connected finite simplicial complex $X$ is covered by sub-complexes such that every non-empty intersection $X'$ of $t\geq1$ members satisfies $\widetilde H_j(X';\FF)=0$ for $0\leq j\leq k-t+1$, then $\widetilde H_j(X;\FF)$ is isomorphic to the $j$-th reduced homology of the nerve of the cover for $0\leq j\leq k$.

\medskip
The connectivity estimates rest on two facts and on the K\"unneth formula, which implies that the join of a homologically $s$-connected and a homologically $t$-connected complex is homologically $(s+t+2)$-connected.
First, by~\cite[Cor.\,2.16]{Blagojevic2025}, for every $U\subsetneq V$ the complex $\CC_A^*[U]$ is homologically $(|U|-N-2)$-connected, while by~\cite[Lem.\,2.11]{Blagojevic2025} the complex $\CC_A^*=\CC_A^*[V]$ is only homologically $(|V|-N-3)$-connected.
Indeed, for $U\subsetneq V$ with $0\notin\conv(A(V-U))$ the Alexander dual of $\CC_A^*[U]$ inside the boundary of the simplex on $U$ is homotopy equivalent to an open subset of the ball $B$ of dimension $N-1$, while for $U=V$ the ball $B$ is replaced by the whole sphere $S(\E)$, and its top cohomology may survive.
Second, if $U$ meets every column $Q_j$, then $\K[U]=(U\cap Q_0)*\cdots*(U\cap Q_n)$ is a join of $n+1$ non-empty $0$-dimensional complexes, and so it is homologically $(n-1)$-connected; for $U=V$ see also~\cite[Sec.\,3.2.2]{BlagojevicZiegler2017}.
Now let $\emptyset\subsetneq U\subsetneq V$.
If $0\in\conv(A(V-U))$, then $\CC_A^*[U]$ is a simplex and the join $\CC_A^*[U]*\K[U]$ is contractible.
Otherwise $U$ meets every column $Q_j$, because $0\in\conv(A(Q_j))$ for every $j$, and the two facts we just discussed show that $\CC_A^*[U]*\K[U]$ is homologically $(|U|+n-N-1)$-connected.
An intersection of $t$ members has $|U|=|V|-t$, so the hypothesis of the Nerve theorem holds with $k:=|V|+n-N-2$.
For $U=V$ the same two facts show that $\CC_A^**\K$ is homologically $(|V|+n-N-2)$-connected, and in particular connected.
Because $n\geq N$, we have that $k\geq|V|-2$, and the Nerve theorem gives 
\[\widetilde H_{|V|-2}(\CC_A^**\K;\FF)\cong\widetilde H_{|V|-2}(S^{|V|-2};\FF)\cong\FF.\]
This contradicts the vanishing of $\widetilde H_{|V|-2}(\CC_A^**\K;\FF)$, because $|V|-2\leq|V|+n-N-2$.
Consequently, $\K$ contains a Tverberg face of $a$.
This completes the proof of Theorem~\ref{th : Tverberg's original}.

\medskip
For an arbitrary sub-complex $\LL\subseteq\K$ the same argument applies analogously, except that the connectivity of $\K[U]$ is replaced by that of $\LL[U]$.
By the discussion above it suffices to consider $U=V$ and the sets $U=U_{a,y}$, for which $0\notin\conv(A(V-U))$.
Thus, if $\LL$ and all the complexes $\LL[U_{a,y}]$, $y\in S(\E)$, are homologically $(n-1)$-connected over $\FF$, then $\LL$ contains a Tverberg face of $a$.
For the difference constraints of Section~\ref{sec : difference complexes} this condition fails in general.

\medskip
The incidence space introduced in the next subsection collects the complexes $\LL[U_{a,y}]$ over the sphere $S(\E)$ itself, as the fibres of a projection to $S(\E)$, instead of over the finite family of sets $U$.
It has two features which the covering scheme lacks and which we will utilise.
First, if $\LL$ contains no Tverberg face of $a$, then the incidence space has the cohomology of $\LL$ itself, Proposition~\ref{prop : projection to L}.
Second, a group acting on the labels acts on the incidence space, and over the fixed sphere of this action the fibres are the complexes $\LL[U]$ for a restricted family of sets $U$, the $G$-boxes of Section~\ref{subsec : group actions}; this is what makes the $G$-box criterion, Theorem~\ref{th : fixed sphere}, possible.

%----------------------------------------------------------------------------------------------
\medskip
\subsection{The incidence space}
\label{subsec : incidence space}
%----------------------------------------------------------------------------------------------
Let $\LL\subseteq\K$ be a sub-complex of the deleted join.
For a point $\lambda=\sum_{v}\lambda_v\,v$ of  $\LL$, written in barycentric coordinates, let $\supp(\lambda)=\{v : \lambda_v>0\}$ be the (unique) face whose relative interior contains $\lambda$.

\medskip
\begin{definition}
	\label{def : incidence space}
	The {\em incidence space} of a sub-complex $\LL\subseteq\K$ with respect to the affine map $a\colon\Delta_n\longrightarrow\R^d$ is
	\[
		T_{a,\LL}:=\{(y,\lambda)\in S(\E)\times\LL : \supp(\lambda)\subseteq U_{a,y}\},
	\]
	with the restrictions of the natural projections  $\pr_1\colon T_{a,\LL}\longrightarrow S(\E)$ and $\pr_2\colon T_{a,\LL}\longrightarrow\LL$.
\end{definition}

\medskip
The fibre of $\pr_1$ over $y\in S(\E)$ is the induced sub-complex $\LL[U_{a,y}]$.
Over $\lambda\in\LL$ the fibre of $\pr_2$ is the closed subspace 
$
	\{y\in S(\E) : \langle y,A(v)\rangle\leq 0\ \text{for all}\ v\in\supp(\lambda)\}
$
of the sphere $S(\E)$.

\medskip
\begin{lemma}
	\label{lem : T is compact}
	The incidence space $T_{a,\LL}$ is a closed, hence compact, subset of $S(\E)\times\LL$.
\end{lemma}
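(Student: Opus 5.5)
Since $S(\E)\times\LL$ is a product of a sphere and a finite simplicial complex, it is compact Hausdorff. It therefore suffices to show that $T_{a,\LL}$ is closed; compactness then follows at once. The plan is to express $T_{a,\LL}$ as a finite union of closed sets, one for each face of $\LL$. This removes the main difficulty: $\supp(\lambda)$ does not vary continuously with $\lambda$, because it can shrink under limits.

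For a face $J\in\LL$ let $|J|\subseteq\LL$ be the closed geometric simplex, that is, the set of points $\lambda$ with $\supp(\lambda)\subseteq J$. Put
\[
	C_J:=\{y\in S(\E) : \langle y,A(v)\rangle\leq0\ \text{for all}\ v\in J\}.
\]
The set $C_J$ is closed in $S(\E)$, since it is a finite intersection of preimages of $(-\infty,0]$ under continuous linear functionals. I claim that
\[
	T_{a,\LL}=\bigcup_{J\in\LL}C_J\times|J| .
\]
For the inclusion "$\subseteq$", take $(y,\lambda)\in T_{a,\LL}$ and set $J=\supp(\lambda)$. This $J$ is a face of $\LL$, we have $\lambda\in|J|$, and $J\subseteq U_{a,y}$ means exactly that $y\in C_J$. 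For the inclusion "$\supseteq$", take $y\in C_J$ and $\lambda\in|J|$. Then $\supp(\lambda)\subseteq J\subseteq U_{a,y}$.

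Each $C_J\times|J|$ is closed in $S(\E)\times\LL$, and $\LL$ has only finitely many faces. Hence the union is closed, and this proves the lemma. No step here is a real obstacle. The only point that needs care is the one the decomposition handles: a limit of points $(y_t,\lambda_t)$ may have smaller support, but a smaller support still lies inside the same $J$.
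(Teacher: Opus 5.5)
Your proof is correct, but it takes a different route from the paper. The paper works vertex by vertex: it rewrites $\supp(\lambda)\subseteq U_{a,y}$ as $\lambda_v\cdot\max\{0,\langle y,A(v)\rangle\}=0$ for every $v\in V$. This exhibits $T_{a,\LL}$ as the common zero set of finitely many continuous functions on $S(\E)\times\LL$, so closedness is an intersection of closed sets. The possible shrinking of the support under limits never needs separate treatment there, because the barycentric coordinates $\lambda_v$ are themselves continuous and the product vanishes automatically once $\lambda_v=0$.

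You work face by face instead, covering $T_{a,\LL}$ by the finitely many closed products $C_J\times|J|$, $J\in\LL$. This handles the same issue by letting a limit point with smaller support sit in the same piece $C_J\times|J|$. The paper's version is shorter. Yours describes the space more explicitly. The fibre of $\pr_1$ over $y$ is visibly $\bigcup_{J\subseteq U_{a,y}}|J|=\LL[U_{a,y}]$. The fibre of $\pr_2$ over $\lambda$ is $C_{\supp(\lambda)}$, since $C_J\subseteq C_{\supp(\lambda)}$ whenever $J\supseteq\supp(\lambda)$. These recover the descriptions stated after Definition~\ref{def : incidence space}, which the paper later relies on.
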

\begin{proof}
	The condition $\supp(\lambda)\subseteq U_{a,y}$ says that $\lambda_v=0$ or $\langle y,A(v)\rangle\leq0$ for every vertex $v\in V$, that is, $\lambda_v\cdot\max\{0,\langle y,A(v)\rangle\}=0$.
	These functions are continuous on $S(\E)\times\LL$ and so $T_{a,\LL}$ is a closed subset of $S(\E)\times\LL$.
\end{proof}

\medskip
Now, we recall the classical Vietoris--Begle mapping theorem \cite[Thm.\,6.9.15]{Spanier1966} because it will be used on several occasions.
For that reason, from now on, we assume Alexander--Spanier cohomology~\cite[Ch.\,6]{Spanier1966}.
In the case of paracompact and locally contractible spaces the Alexander--Spanier cohomology coincides with singular cohomology~\cite[Cor.\,6.9.5]{Spanier1966}.

\medskip
\begin{theorem}[Vietoris--Begle mapping theorem]
	\label{lem : Vietoris-Begle}
	Let $f\colon X\longrightarrow Y$ be a closed continuous surjection of paracompact Hausdorff spaces, let $\FF$ be a field and $\tau\geq 0$, and assume that $\widetilde H^i(f^{-1}(y);\FF)=0$ for all $y\in Y$ and all $i\leq\tau$.
	Then the induced map in cohomology $f^*\colon H^i(Y;\FF)\longrightarrow H^i(X;\FF)$ is an isomorphism for $i\leq\tau$ and a monomorphism for $i=\tau+1$.
\end{theorem}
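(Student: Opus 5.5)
The plan is to prove this via sheaf cohomology and the Leray spectral sequence of $f$, which is the standard route. I will not work through Spanier's original chain-level argument. Write $\underline{\FF}_X$ and $\underline{\FF}_Y$ for the constant sheaves with stalk $\FF$ on $X$ and $Y$. On paracompact Hausdorff spaces, Alexander--Spanier cohomology with coefficients in $\FF$ agrees naturally with sheaf cohomology with coefficients in the constant sheaf $\FF$. Under this identification $f^*$ corresponds to the map induced by $f$ on sheaf cohomology. So it suffices to prove the claim for $H^i(Y;\underline{\FF}_Y)\to H^i(X;\underline{\FF}_X)$.

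The first step is to compute the stalks of the higher direct images $R^qf_*\underline{\FF}_X$. For $y\in Y$ the stalk is $\varinjlim_{W\ni y}H^q(f^{-1}(W);\FF)$, with $W$ running over open neighbourhoods of $y$. Because $f$ is closed, the sets $f^{-1}(W)$ are cofinal among the open neighbourhoods of the fibre $f^{-1}(y)$: if $O\supseteq f^{-1}(y)$ is open, then $W:=Y-f(X-O)$ is an open neighbourhood of $y$ with $f^{-1}(W)\subseteq O$. The fibre is a closed subset of the paracompact space $X$, and such subsets are taut for Alexander--Spanier (equivalently sheaf) cohomology. Hence the direct limit is $H^q(f^{-1}(y);\FF)$.

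From this the hypothesis gives two things. First, $(R^qf_*\underline{\FF}_X)_y=0$ for $1\leq q\leq\tau$, so $R^qf_*\underline{\FF}_X=0$ in that range. Second, since $f$ is surjective the fibre is non-empty, and $\widetilde H^0(f^{-1}(y);\FF)=0$, so $H^0(f^{-1}(y);\FF)=\FF$. Therefore the adjunction map $\underline{\FF}_Y\to f_*\underline{\FF}_X$ is an isomorphism on stalks, hence an isomorphism of sheaves. (The hypothesis includes $i=0$ because $\tau\geq0$.)

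The second step is the Leray spectral sequence
\[E_2^{p,q}=H^p(Y;R^qf_*\underline{\FF}_X)\Longrightarrow H^{p+q}(X;\underline{\FF}_X).\]
Its rows $1\leq q\leq\tau$ vanish, and its bottom row is $H^p(Y;\underline{\FF}_Y)$. The edge homomorphism $E_2^{p,0}\to H^p(X)$ is $f^*$.

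For total degree $i\leq\tau$, the only nonzero term is $E_2^{i,0}$. No differential can hit it: a differential $d_s\colon E_s^{i-s,s-1}\to E_s^{i,0}$ has source in row $s-1$, and for $s\geq2$ and $i-s\geq0$ we have $1\leq s-1\leq\tau$, so the source lies in a vanishing row. Hence $f^*$ is an isomorphism for $i\leq\tau$.

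For $i=\tau+1$ the same reasoning shows that $E_2^{\tau+1,0}$ receives no nonzero differential, since the sources lie in rows $1,\dots,\tau$. Thus $E_\infty^{\tau+1,0}=E_2^{\tau+1,0}$, and this term injects into $H^{\tau+1}(X)$ as the bottom piece of the filtration. So $f^*$ is a monomorphism in degree $\tau+1$.

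The main obstacle is the stalk identification in the first step. It is the only place where closedness of $f$ and paracompactness are used, through the cofinality argument and the tautness of closed subsets of paracompact spaces. Everything after it is formal. If a self-contained argument is preferred, I would replace the spectral sequence with Spanier's direct proof in \cite[Thm.\,6.9.15]{Spanier1966}. The tautness lemma used there is the same one.
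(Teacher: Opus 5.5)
Your argument is correct: it is the standard sheaf-theoretic proof via the Leray spectral sequence.

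The paper gives no proof of its own. It quotes the theorem from Spanier (Thm.~6.9.15), in the section of his book on applications of the cohomology of presheaves. Spanier's proof stays within Alexander--Spanier cohomology, but it rests on the same two geometric inputs you isolate. First, closedness of $f$ makes the sets $f^{-1}(W)$ cofinal among the open neighbourhoods of a fibre. Second, closed subsets of a paracompact Hausdorff space are taut.

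Your route gives transparent bookkeeping. The isomorphisms for $i\le\tau$ and the monomorphism for $i=\tau+1$ are read off from the vanishing rows $1\le q\le\tau$, together with the fact that the bottom row only receives differentials. The cost is that you import two standard but nontrivial facts. One is the natural comparison of Alexander--Spanier with sheaf cohomology on paracompact Hausdorff spaces. The other is the identification of $f^*$ with the Leray edge homomorphism precomposed with the adjunction $\underline{\FF}_Y\to f_*\underline{\FF}_X$. The paper adopts Alexander--Spanier cohomology precisely so that Spanier's statement can be used without this translation.

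One point of precision. The tautness you need is the sheaf-theoretic one, because $R^qf_*\underline{\FF}_X$ is built from the sheaf cohomology of the open sets $f^{-1}(W)$. These need not be paracompact, so Alexander--Spanier and sheaf cohomology need not agree on them, and your ``equivalently'' is not literally right there. Closed subspaces of paracompact spaces are, however, also taut for sheaf cohomology (this is in Godement and in Bredon), so the stalk computation stands as written.
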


\medskip
Utilising the Vietoris--Begle mapping theorem we show that the incidence space of a sub-complex $\LL\subseteq\K$ without Tverberg faces of $a$ has the cohomology of $\LL$.

\medskip
\begin{proposition}
	\label{prop : projection to L}
	If the sub-complex $\LL\subseteq\K$ contains no Tverberg face of the affine map $a\colon\Delta_n\longrightarrow\R^d$, then all fibres of $\pr_2\colon T_{a,\LL}\longrightarrow\LL$ are non-empty, compact and contractible.
	Consequently, the induced map in cohomology  $\pr_2^*\colon H^*(\LL;\FF)\longrightarrow H^*(T_{a,\LL};\FF)$ is an isomorphism.
\end{proposition}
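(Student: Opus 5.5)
The plan is to describe the fibre of $\pr_2$ over a point $\lambda\in\LL$ explicitly and to show that it is a non-empty closed spherically convex subset of $S(\E)$ contained in an open hemisphere. Such a set is contractible, and the Vietoris--Begle mapping theorem, Theorem~\ref{lem : Vietoris-Begle}, then gives the isomorphism in cohomology.

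Fix $\lambda\in\LL$ and set $J:=\supp(\lambda)$. This is a face of $\LL$, so $J\in\K$. By the description after Definition~\ref{def : incidence space}, the fibre is
\[
	P_J:=\{y\in S(\E) : \langle y,A(v)\rangle\leq0\ \text{for all}\ v\in J\},
\]
the intersection of the unit sphere with the closed polyhedral cone $C_J:=\{y\in\E : \langle y,A(v)\rangle\leq 0,\ v\in J\}$. Compactness is immediate, since $P_J$ is closed in $S(\E)$, and also from Lemma~\ref{lem : T is compact}.

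For non-emptiness and contractibility I will use that $\LL$ has no Tverberg face. By Lemma~\ref{lem : CSTM} this gives $0\notin\conv(A(J))$, so $J\in\CC_A$. The set $\conv(A(J))$ is compact and convex and avoids $0$, so strict separation yields a vector $y_0\in\E$ with $\langle y_0,A(v)\rangle<0$ for all $v\in J$. (If $J=\emptyset$, take $P_J=S(\E)$; this case cannot occur for a point $\lambda$, whose support is non-empty.) After normalising, $y_0\in P_J$, so the fibre is non-empty. Moreover $y_0$ lies in the interior of the cone $C_J$.

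It remains to show that $C_J$ contains no line, so that $C_J\cap S(\E)$ is contractible. Here I expect the main care to be needed. If $C_J$ contained a line, it would contain $\pm y$ for some $y\neq0$, and then $\langle y,A(v)\rangle=0$ for all $v\in J$. This is possible, so $C_J$ need not be pointed. I therefore use a radial deformation instead.

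Define $H(y,t):=\big((1-t)y+ty_0\big)/\|(1-t)y+ty_0\|$ for $y\in P_J$ and $t\in[0,1]$. For $v\in J$, the inner product of $(1-t)y+ty_0$ with $A(v)$ is $\leq0$, and it is $<0$ when $t>0$. The vector $(1-t)y+ty_0$ can vanish only if $y=-y_0$ and $t=\tfrac12$. But then $\langle y,A(v)\rangle=-\langle y_0,A(v)\rangle>0$ for every $v\in J$, which contradicts $y\in P_J$ because $J\neq\emptyset$. Hence $H$ is well defined and continuous, it stays inside $P_J$, and it contracts $P_J$ to the point $y_0$.

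Finally, $\pr_2$ is a continuous surjection, since its fibres are non-empty. It is closed because $T_{a,\LL}$ is compact and $\LL$ is Hausdorff. Both spaces are compact metric, hence paracompact Hausdorff. Each fibre is contractible, so its reduced Alexander--Spanier cohomology vanishes; this holds because the fibres are compact ANRs (finite intersections of the sphere with closed half-spaces, so closed spherical polyhedra), or directly by the continuity and homotopy invariance properties of Alexander--Spanier cohomology. Theorem~\ref{lem : Vietoris-Begle}, applied for every $\tau$, then shows that $\pr_2^*$ is an isomorphism in all degrees.
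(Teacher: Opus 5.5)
Your proof is correct and follows essentially the same route as the paper's. You use strict separation to get $y_0$ with $\langle y_0,A(v)\rangle<0$ on $\supp(\lambda)$, contract the fibre along the normalised segment to $y_0$ (the only possible zero, $y=-y_0$, is excluded because $\supp(\lambda)\neq\emptyset$), and then apply Vietoris--Begle in all degrees. One correction: drop your opening claim that the fibre lies in an open hemisphere. As you note yourself, it fails whenever the cone $C_J$ contains a line, and the radial deformation does not need it.
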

\begin{proof}
	Let $\lambda\in\LL$ and let $\sigma=\supp(\lambda)$.
	Since $\sigma$ is not a Tverberg face, $0\notin\conv(A(\sigma))$ by Lemma~\ref{lem : CSTM}, and a hyperplane strictly separating $0$ from the compact convex set $\conv(A(\sigma))$ gives $y_0\in S(\E)$ with $\langle y_0,A(v)\rangle<0$ for all $v\in\sigma$.
	So the fibre $\pr_2^{-1}(\lambda)$ contains $y_0$.
	For arbitrary $y$ in the fibre and $t\in[0,1]$ it holds that
	\[
	\langle(1-t)y+ty_0,A(v)\rangle=(1-t)\langle y ,A(v)\rangle+t\langle y_0,A(v)\rangle \leq 0.
	\]
	Also $(1-t)y+ty_0\neq 0$ because otherwise $y=-\frac t{1-t}\,y_0$ would be positive on $A(\sigma)\neq\emptyset$ and $y$ would not belong to the fibre of $\pr_2 $.
	Normalising these vectors gives a deformation retraction of the fibre $\pr_2^{-1}(\lambda)$ onto $y_0$.
	The fibre is closed in $S(\E)$, hence compact.
	Now $\pr_2$ is a closed surjection of compact spaces with contractible fibres, and Theorem~\ref{lem : Vietoris-Begle} applies in all degrees saying that the induced map in cohomology  $\pr_2^*\colon H^*(\LL;\FF)\longrightarrow H^*(T_{a,\LL};\FF)$ is an isomorphism.
\end{proof}

%----------------------------------------------------------------------------------------------
%----------------------------------------------------------------------------------------------
\bigskip
\section{The Poincar\'e dual of a fixed point and the $G$-box criterion}
\label{sec : fixed sphere}
\bigskip
%----------------------------------------------------------------------------------------------
%----------------------------------------------------------------------------------------------

In this section we discuss group actions on the set of labels $[r]$ and develop further methods for the proofs of the main results. 
The highlights of this section are the construction of the equivariant Poincar\'e dual of a fixed point in Lemma~\ref{lem : Thom class} and the $G$-box criterion in Theorem~\ref{th : fixed sphere}.

%----------------------------------------------------------------------------------------------
\medskip
\subsection{Groups acting on the set of labels}
\label{subsec : group actions}
%----------------------------------------------------------------------------------------------
Let $\sym_r$ be the symmetric group on $r$ letters, or in other words the group of all bijections $[r]\to [r]$.
It naturally acts on the set of labels $[r]$, and so does each of its subgroups $G$.
This action extends to an action on the vertex set $V$ of $(\Delta_n)^{*r}$ by $g\cdot p_{j,i}:=p_{j,g(i)}$, and hence on the simplicial complexes $(\Delta_n)^{*r}$ and $\K$.
Furthermore, $\sym_r$, and so every subgroup $G$, acts on $\E$ by permuting the columns, $(g\cdot y)_{\kappa,i}:=y_{\kappa,g^{-1}(i)}$ for $y\in\E$.
The scalar product on $\E$ is invariant, and by~\eqref{eq : test map} the test map associated to an affine map $a\colon\Delta_n\longrightarrow\R^d$ is equivariant, that is $A(g\cdot\lambda)=g\cdot A(\lambda)$.
Consequently $\CC_A$ is $G$-invariant and $U_{a,g\cdot y}=g\cdot U_{a,y}$.
If $\LL\subseteq\K$ is $G$-invariant, then $T_{a,\LL}$ is invariant under the diagonal action $g\cdot(y,\lambda):=(g\cdot y,g\cdot\lambda)$, and so both projections $\pr_1$ and $\pr_2$ are equivariant.

\medskip
An {\em elementary abelian group} is a group isomorphic to $(\Z/p)^\alpha$ for a prime $p$ and some $\alpha\geq0$.
Let $G\leq\sym_r$ be an elementary abelian subgroup of $\sym_r$.
Since $G$ is abelian and transitive on each of its orbits in $[r]$, all elements of an orbit have the same stabiliser, and the size of every orbit has to be a power of $p$.

\medskip
Now, identifying each $Q_j$ with $[r]$ via $p_{j,i}\mapsto i$, we call a set $U\subseteq V$ a {\em $G$-box} if every $U\cap Q_j$ is a non-empty union of $G$-orbits.
For example, $V$ itself is a $G$-box, and Figure~\ref{fig : G-box} shows another example.

\begin{figure}[hb]
\centering
\begin{tikzpicture}[x=9mm,y=6mm,
	pt/.style={circle,fill=black,inner sep=1.6pt},
	hollow/.style={circle,draw,fill=white,inner sep=1.6pt},
	lab/.style={font=\scriptsize}]
	\foreach \a/\b in {1/3,4/6,7/7}{
		\draw[dashed,rounded corners=2pt] (-0.4,-\a+0.4) rectangle (5.4,-\b-0.4);}
	\node[lab,anchor=west] at (5.55,-2) {$\{1,2,3\}$};
	\node[lab,anchor=west] at (5.55,-5) {$\{4,5,6\}$};
	\node[lab,anchor=west] at (5.55,-7) {$\{7\}$};
	\foreach \i in {1,...,7}{\node[lab,anchor=east] at (-0.55,-\i) {$\i$};}
	\foreach \j/\A/\B/\C in {0/1/0/0,1/0/0/1,2/1/1/0,3/0/1/0,4/1/0/1,5/1/1/1}{
		\node[lab] at (\j,0.15) {$\j$};
		\node[lab] at (\j,-8.1) {$Q_{\j}$};
		\foreach \i in {1,2,3}{\ifnum\A=1 \node[pt] at (\j,-\i) {}; \else \node[hollow] at (\j,-\i) {}; \fi}
		\foreach \i in {4,5,6}{\ifnum\B=1 \node[pt] at (\j,-\i) {}; \else \node[hollow] at (\j,-\i) {}; \fi}
		\ifnum\C=1 \node[pt] at (\j,-7) {}; \else \node[hollow] at (\j,-7) {}; \fi}
\end{tikzpicture}
\caption{\small A $G$-box $U$ (filled vertices) for $r=7$, $n=5$, and the group $G\cong\Z/3$ generated by the permutation $(1\,2\,3)(4\,5\,6)$ of $[7]$.
The rows of the $G$-orbits $\{1,2,3\}$, $\{4,5,6\}$ and $\{7\}$ are framed.
Every column $U\cap Q_j$ is a non-empty union of orbits: each frame is either filled or empty within a column, and no column is empty.
By Lemma~\ref{lem : fibres over the fixed sphere}, every set $U_{a,y}$ with $y\in S(\E^G)$ is of this form.}
\label{fig : G-box}
\end{figure}

\medskip
\begin{lemma}
	\label{lem : decomposition}
	Let $G\leq\sym_r$ be an elementary abelian group.
	\begin{compactenum}[\rm (1)]
		\item The fixed point subspace $\E^G$ consists of the matrices $y\in\E$ whose columns $y_{*,i}$ and $y_{*,i'}$ coincide whenever $i$ and $i'$ lie in one $G$-orbit, and $\dim(\E^G)=(d+1)(|[r]/G|-1)$, where $|[r]/G|$ is the number of $G$-orbits.
		\item The orthogonal complement $\Eperp$ is $G$-invariant and has no non-zero fixed vector.
		\item $S(\E^G)$ is the fixed point set of the (induced) action of $G$ on $S(\E)$.
	\end{compactenum}
\end{lemma}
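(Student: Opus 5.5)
The plan is to work directly with the column-permutation action and use only linear algebra. By definition $(g\cdot y)_{\kappa,i}=y_{\kappa,g^{-1}(i)}$, so $y\in\E$ is $G$-fixed if and only if $y_{*,g^{-1}(i)}=y_{*,i}$ for all $g\in G$ and all $i\in[r]$. This is exactly the condition that the columns are constant on $G$-orbits; neither commutativity nor the elementary abelian hypothesis is needed here.

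For the dimension count in (1), I would parametrise $\E^G$ by assigning one column vector $c_O\in\R^{d+1}$ to each orbit $O\in[r]/G$. The row-sum-zero condition becomes the single vector equation $\sum_O |O|\,c_O=0$ in $\R^{d+1}$. Since all weights $|O|$ are positive, this gives $d+1$ independent linear conditions on the $(d+1)|[r]/G|$ free parameters, and hence $\dim(\E^G)=(d+1)(|[r]/G|-1)$.

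For (2), $G$-invariance of $\Eperp$ follows from the $G$-invariance of the inner product noted above. If $x\in\E^G$ and $z\in\Eperp$, then $\langle g z, x\rangle=\langle z,g^{-1}x\rangle=\langle z,x\rangle=0$. A fixed vector of $\Eperp$ would lie in $\E^G\cap\Eperp=\{0\}$, so there are none.

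For (3), the action on $S(\E)$ is the restriction of the linear isometric action on $\E$. The fixed points of $S(\E)$ are therefore $S(\E)\cap\E^G$, and since $\E^G$ carries the restricted norm, this set is the unit sphere $S(\E^G)$. The only place that calls for any care is checking that the orbit weights do not make the $d+1$ linear conditions in (1) dependent, and this is immediate because each row condition involves distinct coordinates with nonzero coefficients.
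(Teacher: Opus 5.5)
Your proposal is correct and follows essentially the same linear-algebra argument as the paper: you show $G$-fixed vectors are those with columns constant on orbits, the zero-row-sum condition removes $d+1$ dimensions, $\Eperp$ is invariant with $\E^G\cap\Eperp=0$, and the fixed points of $S(\E)$ are $S(\E)\cap\E^G$. The paper counts row by row, observing that the fixed part of each copy of $W_r$ has dimension $|[r]/G|-1$, while you count via one column vector per orbit subject to $\sum_O|O|\,c_O=0$; this is the same count.
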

\begin{proof}
	A vector of $\R^r$ is $G$-fixed if and only if it is constant on every orbit; intersecting with $W_r$ gives a space of dimension $|[r]/G|-1$, and $G$ acts on $\E=W_r^{\oplus(d+1)}$ diagonally on the rows.
	This implies (1), and (3) follows directly.
	The complement of an invariant subspace with respect to an invariant scalar product is invariant, and a fixed vector of $\Eperp$ would lie in $\E^G\cap\Eperp=0$.
\end{proof}

\medbreak
Applying the previous lemma to our situation we get the following claim.

\medskip
\begin{lemma}
	\label{lem : fibres over the fixed sphere}
	Let $G\leq\sym_r$ be an elementary abelian group.
	\begin{compactenum}[\rm (1)]
	\item For every $y\in S(\E^G)$ the set $U_{a,y}$ is a $G$-box.
	\item If $\LL\subseteq\K$ is a $G$-invariant sub-complex, then over the fixed sphere $S(\E^G)$ the fibres of the first projection $\pr_1\colon T_{a,\LL}\longrightarrow S(\E)$ are the induced sub-complexes $\LL[U]$ for some $G$-boxes $U$.	
	\end{compactenum}
\end{lemma}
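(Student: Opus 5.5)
The plan is to combine Lemma~\ref{lem : locality} with Lemma~\ref{lem : decomposition}(1). For part (1), fix $y\in S(\E^G)$ and a column index $0\leq j\leq n$. By Lemma~\ref{lem : decomposition}(1) the columns $y_{*,i}$ and $y_{*,i'}$ of $y$ coincide whenever $i$ and $i'$ lie in one $G$-orbit. By~\eqref{eq : affine functions} this gives $f^y_i=f^y_{i'}$ as affine functions on $\R^d$. Lemma~\ref{lem : locality} describes $U_{a,y}\cap Q_j=\{p_{j,i} : f^y_i(a(p_j))\leq0\}$, so the condition $p_{j,i}\in U_{a,y}$ depends only on the $G$-orbit of $i$. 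Hence, under the identification $Q_j\cong[r]$, the set $U_{a,y}\cap Q_j$ is a union of $G$-orbits. It is non-empty by the same lemma, since the values $f^y_i(a(p_j))$ sum to zero. Doing this for every $j$ shows that $U_{a,y}$ is a $G$-box.

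An alternative route to the orbit-invariance, which I would mention as a check, is equivariance. Lemma~\ref{lem : decomposition}(3) gives $g\cdot y=y$, and the relation $U_{a,g\cdot y}=g\cdot U_{a,y}$ from Section~\ref{subsec : group actions} then shows $g\cdot U_{a,y}=U_{a,y}$ for all $g\in G$. Since $g$ preserves every column $Q_j$, each $U_{a,y}\cap Q_j$ is $G$-invariant, that is, a union of orbits.

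Part (2) is immediate once (1) is in hand. As noted after Definition~\ref{def : incidence space}, the fibre of $\pr_1$ over $y$ is $\LL[U_{a,y}]$. For $y\in S(\E^G)$ the set $U=U_{a,y}$ is a $G$-box by (1). The $G$-invariance of $\LL$ is used only to make $T_{a,\LL}$ a $G$-space, so that the fixed sphere is meaningful; it is not needed for the identification of the fibres.

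I expect no real obstacle here. The only point needing care is the identification of the orbits: $G$ acts on each $Q_j$ through the labels via $g\cdot p_{j,i}=p_{j,g(i)}$, and on $\E$ by permuting columns. This matching of the two actions is exactly what makes the column-equality criterion in Lemma~\ref{lem : decomposition}(1) translate into orbit-invariance of $U_{a,y}\cap Q_j$.
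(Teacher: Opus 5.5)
Your proposal is correct and follows the paper's own proof: Lemma~\ref{lem : decomposition}(1) gives $f^y_i=f^y_{i'}$ for $i,i'$ in one $G$-orbit, Lemma~\ref{lem : locality} then makes each $U_{a,y}\cap Q_j$ a non-empty union of orbits, and (2) is read off from the fibre description $\pr_1^{-1}(y)=\LL[U_{a,y}]$. Your alternative check via $U_{a,g\cdot y}=g\cdot U_{a,y}$ for fixed $y$ is also valid and amounts to the same argument.
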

\begin{proof}
	(1) If $y\in\E^G$, then $f^y_i=f^y_{i'}$ whenever $i$ and $i'$ lie in the same $G$-orbit according to  Lemma~\ref{lem : decomposition}(1) and~\eqref{eq : affine functions}.
	By Lemma~\ref{lem : locality}, every $U_{a,y}\cap Q_j$ is a non-empty union of $G$-orbits.
	(2) The claim directly follows from the claim (1).
\end{proof}

\medskip
Thus, the $G$-boxes form a finite family which does not depend on the affine map $a$.

%----------------------------------------------------------------------------------------------
\medskip
\subsection{Borel construction and Euler classes}
\label{subsec : Borel}
%----------------------------------------------------------------------------------------------
Let $G$ be a finite group, and let $\FF$ be a field.
For a $G$-space $X$ consider the Borel construction $X_G:=\mathrm{E}G\times_GX$, and the Borel fibre bundle
\begin{equation}
	\label{eq : Borel}
	\xymatrix{
		X\ar[r] & \mathrm{E}G\times_G X \ar^-{\pi_X}[r] & \mathrm{B}G .
	}
\end{equation}
The (Borel) equivariant cohomology of $X$ is $H^*_G(X;\FF):=H^*(\mathrm{E}G\times_GX;\FF)$, and for an invariant subspace $X'\subseteq X$ we set $H^*_G(X,X';\FF):=H^*(X_G,X'_G;\FF)$; for background see~\cite[Ch.\,III]{Hsiang1975}, \cite[Sec.\,3.3]{BlagojevicZiegler2017}, and for the cohomology rings $H^*(\mathrm{B}G;\FF_p)$ of elementary abelian groups $G$ see~\cite{Adem2004}.
An equivariant map $f\colon X\longrightarrow Y$ induces the map $f_G:=\mathrm{id}\times_Gf\colon X_G\longrightarrow Y_G$, and additionally $\pi_X=\pi_Y\circ f_G$.
If $G$ acts trivially on $X$, then $X_G=\mathrm{B}G\times X$.
We take as a model of $\mathrm{E}G$ a contractible free $G$-CW-complex.
In particular, the Borel construction of a compact Hausdorff $G$-space $X$ is a paracompact Hausdorff space: the product $\mathrm{E}G\times X$ of a CW complex and a compact Hausdorff space is paracompact Hausdorff, and $X_G$ is its image under the quotient map by the finite group $G$, which is a closed map.

\medskip
A finite dimensional real representation $R$ of $G$ defines the vector bundle $\mathrm{E}G\times_GR\longrightarrow\mathrm{B}G$.
If this bundle is orientable over $\FF_p$, its mod $p$ Euler class is the {\em equivariant Euler class} $e_G(R)\in H^{\dim R}(\mathrm{B}G;\FF_p)$.
It is natural, multiplicative under direct sums, and it vanishes if $R$ has a non-zero fixed vector, consult for example \cite[Ch.\,III]{Hsiang1975}, \cite[\S9]{MilnorStasheff1974}.

\medskip
\begin{lemma}
	\label{lem : Euler class}
	Let $G$ be an elementary abelian group and $R$ a finite dimensional real representation of $G$ with $R^G=0$.
	Then the bundle $\mathrm{E}G\times_GR\longrightarrow\mathrm{B}G$ is orientable over $\FF_p$ and $e_G(R)\neq0$.
\end{lemma}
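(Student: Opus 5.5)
The plan is to reduce to one-dimensional representations via the structure of real representations of elementary abelian groups, and then compute the Euler classes explicitly.

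\emph{Case $p$ odd.} A real representation $R$ of $G\cong(\Z/p)^\alpha$ decomposes into the trivial isotypic part and a sum of two-dimensional irreducibles. Each irreducible $R_\chi$ is the realification of a non-trivial character $\chi\colon G\to\C^\times$, equivalently of a non-zero linear form $\chi\in\mathrm{Hom}(G,\Z/p)$, on which $G$ acts by rotations through multiples of $2\pi/p$. Since $R^G=0$, only the non-trivial pieces occur. Each bundle $\mathrm{E}G\times_G R_\chi$ is a complex line bundle, hence oriented, and its mod $p$ Euler class is the reduction of the first Chern class. This class equals $\beta(\chi)$, where $\chi$ is viewed as an element of $H^1(\mathrm{B}G;\FF_p)$ and $\beta$ is the Bockstein. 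In $H^*(\mathrm{B}G;\FF_p)=\Lambda[x_1,\dots,x_\alpha]\otimes\FF_p[y_1,\dots,y_\alpha]$, this element is a non-zero linear form in the polynomial generators $y_i$. Multiplicativity then makes $e_G(R)$ a product of non-zero linear forms in a polynomial ring, hence non-zero, because a polynomial ring is an integral domain. The orientation of the sum is the product orientation, so orientability is automatic.

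\emph{Case $p=2$.} Every real representation splits into one-dimensional characters $G\to\{\pm1\}$. With $R^G=0$, all of these characters are non-trivial. Mod $2$ every vector bundle is orientable. The Euler class mod $2$ is the top Stiefel--Whitney class, and for a line bundle it is $w_1$, which is the character itself viewed as a non-zero element of $H^1(\mathrm{B}G;\FF_2)$. Since $H^*(\mathrm{B}G;\FF_2)=\FF_2[t_1,\dots,t_\alpha]$ is a domain, the product of these elements is again non-zero.

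The main point needing care is the identification of the Euler class of $\mathrm{E}G\times_G R_\chi$ with $\beta(\chi)$ for odd $p$, that is, the identification of the first Chern class of the line bundle attached to a character. I would do this by naturality: restrict along $\chi\colon G\to\Z/p$, which reduces the claim to $G=\Z/p$ acting on $\C$ by a primitive root of unity. There, the classical computation of $H^*(\mathrm{B}\Z/p;\Z)$ (through the lens space model, or the Gysin sequence of $S^{2m-1}\to L^{2m-1}$) shows that $c_1$ generates $H^2(\mathrm{B}\Z/p;\Z)\cong\Z/p$. Its reduction mod $p$ is then non-zero and equals $\beta$ of the generator of $H^1$ up to a unit, and naturality pulls this back to $\beta(\chi)$. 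Since $\chi\ne0$ and $\beta$ is injective on $H^1$, the resulting linear form is non-zero.
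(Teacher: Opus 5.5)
Your proof is correct, but it takes a different route from the paper's.

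\textbf{The paper's route.} For orientability, the paper argues that when $p$ is odd every element of $G$ has odd order and so acts on $R$ with determinant $1$; for $p=2$, orientability over $\FF_2$ is automatic. The non-vanishing of $e_G(R)$ is proved abstractly, with no computation. If $G\neq 1$, then $R^G=0$ gives $S(R)^G=\emptyset$. By the corollary of the localisation theorem, $\pi_{S(R)}^*\colon H^*(\mathrm{B}G;\FF_p)\to H^*_G(S(R);\FF_p)$ is therefore not injective. The Gysin sequence identifies its kernel with the ideal generated by $e_G(R)$, so $e_G(R)\neq 0$. The trivial group is handled separately ($R=0$, $e=1$).

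\textbf{Your route.} You split $R$ into non-trivial irreducibles and identify each Euler class as a characteristic class. For $p$ odd it is $c_1$ reduced mod $p$, which you match with $\beta(\chi)$ by naturality from $\Z/p$; for $p=2$ it is $w_1=\chi$. These classes lie in the polynomial part of $H^*(\mathrm{B}G;\FF_p)$, which is an integral domain, so their product is non-zero. Orientability comes for free from the complex structures on the summands.

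\textbf{Comparison.} Your argument yields strictly more: an explicit description of $e_G(R)$ as a product of non-zero linear forms. The paper only quotes this kind of description, from Mann--Milgram, for $W_r$ under the regular action. The cost is that you need the real representation theory of $(\Z/p)^\alpha$, the ring $H^*(\mathrm{B}G;\FF_p)$, and the Chern class/Bockstein identification, all standard. The paper's argument is shorter and avoids these computations, at the price of quoting the localisation theorem, a heavier tool. You may want to say explicitly that the case of trivial $G$ is covered by the empty product $e_G(0)=1$.
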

This is a well-known fact.
For $p$ odd the group $G$ has odd order, so every element of $G$ acts on $R$ with determinant $1$, and the bundle is orientable; for $p=2$ every vector bundle is orientable over $\FF_2$.
The non-vanishing of $e_G(R)$ is a consequence of the following corollary of the localisation theorem~\cite[Cor.\,1, p.\,45]{Hsiang1975}, which in the case of finite groups holds only for elementary abelian groups; we present it in the form stated in~\cite[Proof of Thm.\,3.11]{BlagojevicZiegler2017}.

\medskip
\begin{theorem*}
	Let $p$ be a prime, $G=(\Z/p)^\alpha$ with $\alpha\geq 1$, and let $X$ be a finite $G$-CW complex.
	The fixed point set $X^G$ of the space $X$ is non-empty if and only if the map in cohomology $H^*(\mathrm{B}G;\FF_p)\longrightarrow H^*(\mathrm{E}G\times_{G} X;\FF_p)$, induced by the projection $\mathrm{E}G\times_{G} X\longrightarrow\mathrm{B}G$, is a monomorphism.
\end{theorem*}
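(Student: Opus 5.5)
The plan is to prove the two implications separately. The direction ``fixed point $\Rightarrow$ monomorphism'' is formal. The converse I will prove in contrapositive form: if $X^G=\emptyset$, then the kernel of $\pi_X^*\colon H^*(\mathrm{B}G;\FF_p)\longrightarrow H^*_G(X;\FF_p)$ contains a non-zero class. The converse is where finiteness of $X$ and the elementary abelian structure of $G$ are used.

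\emph{Fixed point gives a monomorphism.} Let $x\in X^G$. The inclusion $\{x\}\hookrightarrow X$ is equivariant, so it induces $s\colon\mathrm{B}G=\mathrm{E}G\times_G\{x\}\longrightarrow X_G$ with $\pi_X\circ s=\mathrm{id}_{\mathrm{B}G}$. Hence $s^*\circ\pi_X^*=\mathrm{id}$, and $\pi_X^*$ is injective.

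\emph{No fixed point: a class killed on each orbit type.} Suppose $X^G=\emptyset$. Then every cell of $X$ has the form $G/H_\sigma\times D^k$ with $H_\sigma$ a proper subgroup of $G$. For each proper subgroup $H<G$ I choose a non-zero $c_H\in H^*(\mathrm{B}G;\FF_p)$ whose restriction to $H^*(\mathrm{B}H;\FF_p)$ vanishes. Pick a non-zero homomorphism $u\colon G\to\Z/p$ with $u|_H=0$; this exists because $G/H$ is a non-trivial elementary abelian group. View $u$ as a class in $H^1(\mathrm{B}G;\FF_p)=\mathrm{Hom}(G,\FF_p)$. If $p=2$ take $c_H:=u$. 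If $p$ is odd take $c_H:=\beta(u)\in H^2$, the Bockstein of $u$. Restriction is natural and $u|_H=0$, so $c_H|_H=0$.

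Non-vanishing, and non-vanishing of products, follow from the known structure of the cohomology ring. Write $G\cong(\Z/p)^\alpha$ with $\alpha\geq1$. For $p=2$, $H^*(\mathrm{B}G;\FF_2)=\FF_2[u_1,\dots,u_\alpha]$. For $p$ odd, $H^*(\mathrm{B}G;\FF_p)=\Lambda[u_1,\dots,u_\alpha]\otimes\FF_p[\beta u_1,\dots,\beta u_\alpha]$. In both cases the classes $c_H$ are non-zero linear forms in a polynomial subalgebra, which is an integral domain. So any finite product of the $c_H$ is non-zero. This is exactly the point where the elementary abelian hypothesis enters.

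\emph{Annihilating $H^*_G(X)$.} I filter $X$ by its equivariant skeleta $X_0\subseteq\cdots\subseteq X_m=X$, all $G$-invariant. By excision and the suspension isomorphism,
\[
H^*_G(X_k,X_{k-1};\FF_p)\cong\bigoplus_{\sigma}H^{*-k}(\mathrm{B}H_\sigma;\FF_p),
\]
where $\sigma$ runs over the equivariant $k$-cells. This is an isomorphism of $H^*(\mathrm{B}G)$-modules, with $H^*(\mathrm{B}G)$ acting on $H^*(\mathrm{B}H_\sigma)$ through restriction. Hence it is annihilated by $a_k:=\prod_\sigma c_{H_\sigma}$.

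Now I prove by induction on $k$ that $b_k:=a_0a_1\cdots a_k$ annihilates $H^*_G(X_k;\FF_p)$. The induction uses the long exact sequence of the pair $(X_k,X_{k-1})$, which is a sequence of $H^*(\mathrm{B}G)$-modules. Take $z\in H^*_G(X_k)$. Its restriction to $X_{k-1}$ is killed by $b_{k-1}$, so $b_{k-1}z$ comes from $H^*_G(X_k,X_{k-1})$. Therefore $a_kb_{k-1}z=0$.

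Finally, $\pi_X^*(c)=c\cdot1$ for every $c\in H^*(\mathrm{B}G)$, where $1\in H^0_G(X)$. With $c=b_m$ this gives $\pi_X^*(b_m)=b_m\cdot 1=0$, while $b_m\neq0$ by the previous paragraph. So $\pi_X^*$ is not a monomorphism.

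I expect the main obstacle to be this last step. It requires checking that the relative cohomology of the skeletal pairs really is the stated sum of the $H^*(\mathrm{B}H_\sigma)$ as modules over $H^*(\mathrm{B}G)$, and running the annihilator induction carefully through the exact sequences.
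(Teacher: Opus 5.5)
Your proof is correct, and it is more self-contained than what the paper does. The paper gives no proof of this statement. It quotes it as a corollary of the localisation theorem in Hsiang's book, in the form used by Blagojevi\'c--Ziegler.

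On that route the result comes from the full localisation isomorphism $S^{-1}H^*_G(X;\FF_p)\cong S^{-1}H^*_G(X^G;\FF_p)$. Here $S\subseteq H^*(\mathrm{B}G;\FF_p)$ is the multiplicative set generated by the non-zero linear forms: the classes $u\in H^1$ for $p=2$, and their Bocksteins $\beta u$ for $p$ odd. If $X^G=\emptyset$ the right side vanishes, so $1\in H^0_G(X)$ is $S$-torsion, which gives a non-zero element of $\ker\pi_X^*$.

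You run the proof of that theorem only in the special case needed. Your induction over equivariant skeleta with the annihilators $a_k$ is the standard argument that $H^*_G$ of a finite $G$-CW complex with no $G$-fixed cells is $S$-torsion. Your integral-domain observation about the polynomial part of $H^*(\mathrm{B}G;\FF_p)$ is exactly why $0\notin S$, and you correctly identify it as the step that needs $G$ elementary abelian.

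What each approach buys:
\begin{itemize}
\item Your route is self-contained and gives an explicit kernel element $b_m$. Its degree is the number of equivariant cells, times $2$ when $p$ is odd.
\item The cited route gives more: the whole localisation isomorphism, and in Hsiang's setting also finite-dimensional paracompact spaces rather than finite complexes.
\item The paper only needs the statement as you proved it, applied to the sphere $S(R)$.
\end{itemize}

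The obstacle you flag is harmless. With trivial action on the disc, the Borel construction of a cell $G/H\times(D^k,S^{k-1})$ is $\mathrm{B}H\times(D^k,S^{k-1})$, mapping to $\mathrm{B}G$ through $\mathrm{B}H\to\mathrm{B}G$. So $H^*(\mathrm{B}G)$ acts through restriction to $H$. The suspension isomorphism is a cross product with a generator of $H^k(D^k,S^{k-1})$, hence linear. Your induction uses only maps induced by maps of pairs, never the connecting homomorphism, and these are strictly $H^*(\mathrm{B}G)$-linear.
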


\medskip
Indeed, if $G$ is non-trivial, then $R^G=0$ means that $S(R)^G=\emptyset$, and so $\pi_{S(R)}^*\colon H^*(\mathrm{B}G;\FF_p)\longrightarrow H^*_G(S(R);\FF_p)$ is not a monomorphism.
By the Gysin sequence of the sphere bundle $S(R)_G\longrightarrow\mathrm{B}G$ the kernel of $\pi_{S(R)}^*$ is the ideal generated by $e_G(R)$~\cite[\S12]{MilnorStasheff1974}.
Hence $e_G(R)\neq0$.
If $G$ is trivial, then $R=R^G=0$ and $e_G(R)=1$.
For $G\cong(\Z/p)^\alpha$ acting regularly on $[r]$, $r=p^\alpha$, by the work of Mann and Milgram~\cite{Mann1982} the class $e_G(\E)=e_G(W_r^{\oplus(d+1)})$ is given as a product of non-zero linear forms in the polynomial part of $H^*(\mathrm{B}G;\FF_p)$; see also~\cite{Adem2004} and~\cite[Proof of Thm.\,3.11]{BlagojevicZiegler2017}.

\medskip
The next lemma is the equivariant version of the Vietoris--Begle mapping theorem; consult Theorem~\ref{lem : Vietoris-Begle}.

\medskip
\begin{lemma}
	\label{lem : fibrewise}
	Let $G$ be a finite group, $\FF$ a field, $\tau\geq0$, and let $f\colon X\longrightarrow Y$ be an equivariant continuous surjection of compact Hausdorff $G$-spaces such that $\widetilde H^i(f^{-1}(y);\FF)=0$ for all $y\in Y$ and $i\leq\tau$.
	Then $f_G^*\colon H^i_G(Y;\FF)\longrightarrow H^i_G(X;\FF)$ is an isomorphism for $i\leq\tau$ and a monomorphism for $i=\tau+1$.
	
	\smallskip\noindent
	In particular, if $X\neq\emptyset$ and $\widetilde H^i(X;\FF)=0$ for $i\leq\tau$, then $\pi_X^*\colon H^i(\mathrm{B}G;\FF)\longrightarrow H^i_G(X;\FF)$ is an isomorphism for $i\leq\tau$ and a monomorphism for $i=\tau+1$.
\end{lemma}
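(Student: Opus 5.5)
The plan is to reduce the statement to the non-equivariant Vietoris--Begle theorem, Theorem~\ref{lem : Vietoris-Begle}, applied to the induced map of Borel constructions
\[
	f_G=\mathrm{id}\times_Gf\colon X_G=\mathrm{E}G\times_GX\longrightarrow Y_G=\mathrm{E}G\times_GY .
\]
We need to check three things. First, both Borel constructions are paracompact Hausdorff; this was noted above for compact Hausdorff $G$-spaces. Second, $f_G$ is a continuous surjection. Continuity is clear, and surjectivity holds because $f$ is surjective: given $[e,y]$, pick $x\in f^{-1}(y)$, and then $f_G[e,x]=[e,y]$. Third, we identify the fibres. Since $G$ acts on $\mathrm{E}G$ freely, the fibre of $f_G$ over $[e,y]$ is homeomorphic to $f^{-1}(y)$. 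Indeed, the points $[e',x]$ with $[e',f(x)]=[e,y]$ can be normalised to $e'=e$, and then $f(x)=y$ is forced, because $e'=ge$ determines $g$ uniquely. So all fibres have vanishing reduced cohomology in degrees $\leq\tau$.

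Closedness of $f_G$ is the one step needing an argument, and I expect it to be the main (mild) obstacle. My approach is to note that $\mathrm{id}\times f\colon\mathrm{E}G\times X\to\mathrm{E}G\times Y$ is closed. This holds because $X$ is compact and $Y$ is Hausdorff: a product of a map of compact Hausdorff spaces with an identity is proper, and proper maps into a compactly generated (here paracompact Hausdorff, CW times compact) space are closed. Alternatively, one can argue directly with the tube lemma. Then use that the quotient maps $q_X,q_Y$ by the finite group $G$ are open and closed, and that $q_X$ is surjective. For $C\subseteq X_G$ closed we have $f_G(C)=q_Y\big((\mathrm{id}\times f)(q_X^{-1}(C))\big)$, which is closed as the image of a closed set under the closed maps $\mathrm{id}\times f$ and $q_Y$.

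With these facts Theorem~\ref{lem : Vietoris-Begle} gives the first assertion. Here $H^*_G$ is by definition the (Alexander--Spanier) cohomology of the Borel construction.

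For the ``in particular'' part, apply the first part with $Y=\pt$ carrying the trivial action. Then $f\colon X\to\pt$ is surjective because $X\neq\emptyset$, its only fibre is $X$, and $Y_G=\mathrm{B}G\times\pt=\mathrm{B}G$. Under this identification $f_G=\pi_X$, so $\pi_X^*$ is an isomorphism for $i\leq\tau$ and a monomorphism for $i=\tau+1$.
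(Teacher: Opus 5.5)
Your proposal is correct and follows essentially the same route as the paper. Like the paper, you apply Theorem~\ref{lem : Vietoris-Begle} to $f_G$, obtain closedness from the closedness of $\mathrm{id}\times f$ and of the quotient map by the finite group $G$, identify the fibres via the free action on $\mathrm{E}G$, and take $Y=\pt$ for the last claim. The one detail you leave implicit is that the bijection $x\mapsto[e,x]$ from $f^{-1}(y)$ onto the fibre is a homeomorphism; the paper gets this because it is a continuous bijection from a compact space onto a Hausdorff space.
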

\begin{proof}
	Since $X$ is compact and $Y$ is Hausdorff, the map $f$ is proper, and so the map $\mathrm{id}\times f\colon\mathrm{E}G\times X\longrightarrow\mathrm{E}G\times Y$ is closed.
	The quotient map $\mathrm{E}G\times Y\longrightarrow Y_G$ is closed, because $G$ is finite, and the preimage in $\mathrm{E}G\times X$ of a closed subset of $X_G$ is closed; hence the induced map $f_G\colon X_G\longrightarrow Y_G$ is closed, and it is a surjection of paracompact Hausdorff spaces by Section~\ref{subsec : Borel}.
	Over $[z,y]$ the fibre of $f_G$ is $\{[z,x] : f(x)=y\}$, because $G$ acts freely on $\mathrm{E}G$, and $x\longmapsto[z,x]$ is a continuous bijection from the compact space $f^{-1}(y)$ onto this fibre, which is Hausdorff; so the fibre is homeomorphic to $f^{-1}(y)$.
	Now Theorem~\ref{lem : Vietoris-Begle} applied to $f_G$ implies the claim of the lemma. 
	For the last claim just take $Y=\pt$.
\end{proof}

%----------------------------------------------------------------------------------------------
\medskip
\subsection{The equivariant Poincar\'e dual of a fixed point}
\label{subsec : Thom class}
%----------------------------------------------------------------------------------------------
Let $G\leq\sym_r$ be an elementary abelian group of order $|G|=p^{\alpha}$ which acts non-trivially on $[r]$ and has at least two orbits, and let $m:=\dim\Eperp$.
Then by Lemma~\ref{lem : decomposition}, $\dim S(\E^G)=N-1-m$, and
\[
	1\ \leq\ \dim S(\E^G)\ \leq\ N-3,
\]
because there are at least two orbits, and because some orbit has at least $p\geq2$ elements so that $m\geq(d+1)(p-1)\geq 2$.
All cohomology in this subsection has coefficients in $\FF_p$, which we omit.
Let
\[
	e:=e_G(\Eperp)\in H^m(\mathrm{B}G),
\]
which is defined and non-zero by Lemma~\ref{lem : decomposition}(2) and Lemma~\ref{lem : Euler class}, and let 
\[
[S(\E^G)]^*\in H^{N-1-m}(S(\E^G))\cong\FF_p
\] 
be a fixed generator.
Since $G$ acts trivially on $S(\E^G)$, the K\"unneth formula gives that 
\begin{equation}
	\label{eq : Kunneth fixed sphere}
	H^{N-1}_G(S(\E^G))=H^{N-1}(\mathrm{B}G\times S(\E^G))\cong\big(H^{N-1}(\mathrm{B}G)\otimes1\big)\ \oplus\ \big(H^{m}(\mathrm{B}G)\otimes[S(\E^G)]^*\big).
\end{equation}
We call the second summand the {\em mixed} summand.

\medskip
\begin{lemma}[Equivariant Poincar\'e dual of a fixed point]
	\label{lem : Thom class}
	There is a class $\Theta\in H^{N-1}_G(S(\E))$ whose restriction to the fixed sphere is the following non-zero cohomology class:
	\[
		\Theta|_{S(\E^G)_G}\ =\ e\times[S(\E^G)]^*\ = \ e_G(\Eperp)\times[S(\E^G)]^*\in\ H^{N-1}_G(S(\E^G))=H^{N-1}(\mathrm{B}G\times S(\E^G)).
	\]
\end{lemma}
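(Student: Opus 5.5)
The plan is to build $\Theta$ as an equivariant Thom class of the normal bundle of the fixed sphere, pushed forward to all of $S(\E)$. The normal bundle of $S(\E^G)$ in $S(\E)$ is the trivial bundle with fibre $\Eperp$. Write $\E=\E^G\oplus\Eperp$ and let $D$ be the closed tubular neighbourhood
\[
D:=\{(x,z)/\|(x,z)\| : x\in S(\E^G),\ z\in\Eperp,\ \|z\|\leq 1\}\ \cong\ S(\E^G)\times D(\Eperp),
\]
a $G$-equivariant homeomorphism, since $G$ fixes $\E^G$ pointwise and acts linearly on $\Eperp$. Its boundary is $\partial D\cong S(\E^G)\times S(\Eperp)$.

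The first step is to produce a class on $D$ relative to its boundary. By Lemma~\ref{lem : Euler class} the bundle $\mathrm{E}G\times_G\Eperp\to\mathrm{B}G$ is $\FF_p$-orientable, so it has a Thom class
\[
u\in H^m_G(D(\Eperp),S(\Eperp)),
\]
whose restriction to the zero section is $e$. Form the cross product
\[
u\times[S(\E^G)]^*\in H^{m+(N-1-m)}_G\big(S(\E^G)\times(D(\Eperp),S(\Eperp))\big)=H^{N-1}_G(D,\partial D).
\]
The restriction of this class to the zero section $S(\E^G)\times\{0\}$ is $e\times[S(\E^G)]^*$, since restricting $u$ to the zero section gives $e$.

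The second step is to pass from $(D,\partial D)$ to $S(\E)$. By excision, $H^*_G(D,\partial D)\cong H^*_G(S(\E),S(\E)-\mathrm{int}\,D)$. Let $\Theta$ be the image of $u\times[S(\E^G)]^*$ under the map
\[
H^{N-1}_G(S(\E),S(\E)-\mathrm{int}\,D)\longrightarrow H^{N-1}_G(S(\E))
\]
that forgets the subspace. Excision holds for these Borel constructions because $S(\E)-\mathrm{int}\,D$ is a closed invariant set and the pairs are compact $G$-ENRs; in Alexander--Spanier cohomology one can instead use closed excision or a collar. The restriction of $\Theta$ to $S(\E^G)_G$ factors through $D_G$. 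There it equals the restriction of the relative class to the zero section, namely $e\times[S(\E^G)]^*$.

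The last step is non-vanishing. Under the K\"unneth decomposition~\eqref{eq : Kunneth fixed sphere}, $e\times[S(\E^G)]^*$ lies in the mixed summand as $e\otimes[S(\E^G)]^*$. This is non-zero because $e\neq0$ by Lemma~\ref{lem : Euler class} and Lemma~\ref{lem : decomposition}(2), and the K\"unneth map over a field is injective.

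I expect the main obstacle to be the naturality bookkeeping in the first step. One must check that $u\times[S(\E^G)]^*$ restricts on the zero section exactly to $e\times[S(\E^G)]^*$, with no correction term from the $H^{N-1}(\mathrm{B}G)\otimes 1$ summand. This follows from naturality of the cross product and the defining property of the Thom class: its restriction to the zero section is the Euler class. One must also confirm that the equivariant excision is valid in the Alexander--Spanier setting. I would settle the latter by noting that everything is a finite $G$-CW pair, where Alexander--Spanier and singular cohomology agree.
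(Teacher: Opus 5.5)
Your proof is correct, but it takes a different route from the one the paper writes out.

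\textbf{Your route.} You build $\Theta$ explicitly as the Gysin pushforward $\iota_!([S(\E^G)]^*)$. You cross the Thom class $u$ of $\xi=\mathrm{E}G\times_G\Eperp$ with $[S(\E^G)]^*$ on the tubular neighbourhood $D\cong S(\E^G)\times D(\Eperp)$. You then transport this class to $(S(\E),S(\E)-\mathrm{int}\,D)$ by strong excision or a collar, and forget the subspace. The restriction to the zero section is then $e\times[S(\E^G)]^*$ by naturality of the cross product alone. In effect you prove the self-intersection formula $\iota^*\iota_!x=e(\nu)\,x$ directly in this product situation. This is essentially the geometric picture the paper gives before the proof as motivation (the equivariant Poincar\'e dual of a fixed point and the excess intersection formula), which it explicitly does not use.

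\textbf{The paper's route.} The paper works with the exact sequence of the pair $(S(\E),S(\E^G))$. It identifies $H^*_G(S(\E),S(\E^G))$ with a shift of $H^*(S(\xi))$ by excision. It shows $\delta(1\times[S(\E^G)]^*)\neq0$ by restricting to a fibre of the Borel fibration. It then reads off from the Gysin sequence of $S(\xi)\to\mathrm{B}G$ that $\delta(c\times[S(\E^G)]^*)=0$ exactly when $c\in\FF_p\,e$.

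\textbf{Comparison.} Your construction is more direct and gives a concrete representative. It also does not need the standing facts $m\geq2$ (connectedness of $S(\xi)$) or $H^{N-1-m}(S(\E))=0$. In exchange it relies on the Thom isomorphism for $\xi$ over the infinite complex $\mathrm{B}G$ and on the non-standard excision for $(D,\partial D)$, both of which you handle adequately. The paper's argument yields more: the mixed-summand classes that extend to $S(\E)$ are exactly the multiples of $e\times[S(\E^G)]^*$. For the $G$-box criterion, however, only the existence you establish is needed.
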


\medskip
In other words, the restriction of $\Theta$ has no component in the summand $H^{N-1}(\mathrm{B}G)\otimes1$, and its component in the mixed summand is $e\otimes[S(\E^G)]^*$.
In the usual non-equivariant setting, the restriction of any class of $H^{N-1}(S(\E))$ to the sphere $S(\E^G)$ of smaller dimension vanishes.

Equivariantly the situation is different, and there is a geometric reason for it, which explains the name of the class although the proof below does not use it.
Let $x_0\in S(\E^G)$ be a fixed point.
Non-equivariantly, the generator of $H^{N-1}(S(\E))$ is the Poincar\'e dual of the point $x_0$.
By the excess intersection formula, the restriction of this class to the submanifold $S(\E^G)$ through $x_0$ is the Poincar\'e dual of $x_0$ in $S(\E^G)$, that is $[S(\E^G)]^*$, multiplied by the Euler class of the normal bundle of $S(\E^G)$ in $S(\E)$.
This normal bundle is the trivial bundle $S(\E^G)\times\Eperp$, so its Euler class vanishes, and with it the restriction.
After the Borel construction the normal bundle becomes $(S(\E^G)\times\Eperp)_G\cong(\mathrm{E}G\times_G\Eperp)\times S(\E^G)$, a bundle over $\mathrm{B}G\times S(\E^G)$ which is no longer trivial: its Euler class is $e\times1$ with $e=e_G(\Eperp)\neq0$.
So the equivariant Poincar\'e dual of the fixed point $x_0$, which is the image of the generator of $H^{N-1}_G(S(\E),S(\E)-x_0)$ in $H^{N-1}_G(S(\E))$, restricts to $(e\times1)\cdot(1\times[S(\E^G)]^*)=e\times[S(\E^G)]^*$, so exactly what Lemma~\ref{lem : Thom class} proves. Moreover, any class $\Theta$ with the same restriction would also work, and the proof constructs one directly.

\medskip

\begin{proof}[Proof of Lemma~\ref{lem : Thom class}]
We determine the image of the restriction
\[
	\iota^*\colon H^{N-1}_G(S(\E))\longrightarrow H^{N-1}_G(S(\E^G)),
\]
which is induced by the inclusion $\iota\colon S(\E^G)\longrightarrow S(\E)$, using the exact sequence of the pair $(S(\E),S(\E^G))$,
\begin{equation}
	\label{eq : pair LES}
	  H^{N-1}_G(S(\E))\xrightarrow{\ \iota^*\ }H^{N-1}_G(S(\E^G))\xrightarrow{\ \delta\ }H^{N}_G(S(\E),S(\E^G)).
\end{equation}
Note that in this exact sequence all maps are $H^*(\mathrm{B}G)$-linear, up to a sign.

Now, for $\omega\in H^{N-1}(\mathrm{B}G)$ we have $\delta(\omega\times1)=0$, because $\omega\times1=\iota^*\pi_{S(\E)}^*\omega$ and $\delta\iota^*=0$.
Hence, by~\eqref{eq : Kunneth fixed sphere} and the $H^*(\mathrm{B}G)$-linearity of $\delta$, the kernel of $\delta$ in degree $N-1$ is determined by the classes $c\in H^m(\mathrm{B}G)$ with
\begin{equation}
	\label{eq : goal}
	\delta\big(c\times[S(\E^G)]^*\big)=\pm\,c\cdot\delta\big(1\times[S(\E^G)]^*\big)=0.
\end{equation}
We show that~\eqref{eq : goal} holds if and only if $c\in\FF_p\,e$.
For the lemma itself only the case $c=e$ is needed.

\medskip
We proceed in three steps and use the vector bundle $\xi:\mathrm{E}G\times_G\Eperp\longrightarrow\mathrm{B}G$ of rank $m$ with Euler class $e$, and its associated sphere bundle $S(\xi)=S(\Eperp)_G$ and projection $\pi_\xi\colon S(\xi)\longrightarrow\mathrm{B}G$.
In our case the sphere bundle $S(\xi)$ is connected, as $m\geq2$.

\medskip
	{\em Step 1: }
	Write $y=v+w$ with $v\in\E^G$ and $w\in\Eperp$.
	The equivariant homotopy $(v,w)\mapsto(v+(1-t)w)/\|v+(1-t)w\|$ deforms $S(\E)-S(\Eperp)$ onto $S(\E^G)$, so the inclusion of pairs $(S(\E),S(\E^G))\longrightarrow(S(\E),S(\E)-S(\Eperp))$ induces an isomorphism in equivariant cohomology.
	Next, the open invariant set $\mathcal W:=\{v+w\in S(\E) : \|v\|<\|w\|\}$ is equivariantly homeomorphic to $S(\Eperp)\times D(\E^G)$, where $D(\E^G)$ is the open unit disc, by $v+w\mapsto(w/\|w\|,v/\|w\|)$ with inverse $(w,x)\mapsto(w+x)/\sqrt{1+\|x\|^2}$.
	This homeomorphism maps $\mathcal W\cap S(\Eperp)$ onto $S(\Eperp)\times0$.
	Excising the closed set $S(\E)-\mathcal W$, which lies in the open set $S(\E)-S(\Eperp)$, and using that $G$ acts trivially on $\E^G$, we get
	\[
		H^*_G(S(\E),S(\E^G))\cong H^*_G(\mathcal W,\mathcal W-S(\Eperp))\cong H^*\big(S(\xi)\times(D(\E^G),D(\E^G)-0)\big).
	\]
	With a generator $u_0\in H^{N-m}(D(\E^G),D(\E^G)-0)\cong\FF_p$ the K\"unneth formula gives an $H^*(\mathrm{B}G)$-linear isomorphism
	\[
		\Phi\colon H^{j}(S(\xi))\longrightarrow H^{j+N-m}_G(S(\E),S(\E^G)),\qquad x\longmapsto x\times u_0 .
	\]

	\medskip
	{\em Step 2:}
	The class 
	\[
	\delta(1\times[S(\E^G)]^*)\ \in \ H^{N-m}_G(S(\E),S(\E^G))=\Phi(H^0(S(\xi)))\cong\FF_p
	\] 
	is not zero.
	Indeed, suppose that $\delta(1\times[S(\E^G)]^*)=0$.
	The exact sequence of the pair $(S(\E),S(\E^G))$ in degree $N-1-m$, the analogue of~\eqref{eq : pair LES}, gives a class $z\in H^{N-1-m}_G(S(\E))$ with $\iota^*z=1\times[S(\E^G)]^*$.
	Fix a point $b\in\mathrm{B}G$, and consider the inclusions of the fibres over $b$ of the Borel fibrations,
	\[
		j\colon S(\E)\longrightarrow S(\E)_G\qquad\text{and}\qquad j'\colon S(\E^G)\longrightarrow S(\E^G)_G=\mathrm{B}G\times S(\E^G),\quad x\longmapsto(b,x).
	\]
	They satisfy $\iota_G\circ j'=j\circ\iota$, where on the left $\iota_G\colon S(\E^G)_G\longrightarrow S(\E)_G$ is the map induced by $\iota$.
	Hence
	\[
		[S(\E^G)]^*=j'^*\big(1\times[S(\E^G)]^*\big)=j'^*\iota^*z=\iota^*j^*z,
	\]
	where on the right $\iota^*\colon H^{N-1-m}(S(\E))\longrightarrow H^{N-1-m}(S(\E^G))$ is the non-equivariant restriction.
	But $H^{N-1-m}(S(\E))=0$, because $S(\E)$ is a sphere of dimension $N-1$ and $1\leq N-1-m\leq N-2$.
	So $[S(\E^G)]^*=0$, which contradicts the choice of $[S(\E^G)]^*$ as a generator.
	Hence $\delta(1\times[S(\E^G)]^*)$ is a non-zero multiple of $\Phi(1)$.

	\medbreak
	Let $c\in H^m(\mathrm{B}G)$.
	The ring $H^*(\mathrm{B}G)$ acts on $H^*(S(\xi))$ through $\pi_\xi^*$, so $c\cdot1=\pi_\xi^*c$ in $H^m(S(\xi))$.
	Since $\delta$ is $H^*(\mathrm{B}G)$-linear up to a sign and $\Phi$ is $H^*(\mathrm{B}G)$-linear, the class
	\[
		\delta\big(c\times[S(\E^G)]^*\big)=\pm\,c\cdot\delta\big(1\times[S(\E^G)]^*\big)
	\]
	is a non-zero multiple of $c\cdot\Phi(1)=\Phi(c\cdot1)=\Phi(\pi_\xi^*c)$.

	\medskip
	{\em Step 3:}
	For the $\FF_p$-oriented $(m-1)$-sphere bundle $S(\xi)\longrightarrow\mathrm{B}G$ the Gysin sequence~\cite[\S12]{MilnorStasheff1974}, \cite[\S4.D]{Hatcher2002}
	\[
		\cdots\longrightarrow H^{j-m}(\mathrm{B}G)\xrightarrow{\ \cdot e\ }H^j(\mathrm{B}G)\xrightarrow{\ \pi_\xi^*\ }H^j(S(\xi))\longrightarrow H^{j-m+1}(\mathrm{B}G)\longrightarrow\cdots
	\]
	is exact.
	For $j=m$ it shows that the kernel of $\pi_\xi^*\colon H^m(\mathrm{B}G)\longrightarrow H^m(S(\xi))$ is $e\cdot H^0(\mathrm{B}G)=\FF_p\,e$.
	Now let $c\in H^m(\mathrm{B}G)$.
	By Step~2 the class $\delta(c\times[S(\E^G)]^*)$ is a non-zero multiple of $\Phi(\pi_\xi^*c)$, and $\Phi$ is injective, since it is an isomorphism by Step~1.
	Therefore $\delta(c\times[S(\E^G)]^*)=0$ if and only if $\pi_\xi^*c=0$, that is, if and only if $c\in\FF_p\,e$.
	This proves the desired claim~\eqref{eq : goal}.
	
	\medbreak
	In particular, $\delta(e\times[S(\E^G)]^*)=0$, and by the exactness of~\eqref{eq : pair LES} there is $\Theta\in H^{N-1}_G(S(\E))$ with $\iota^*\Theta=e\times[S(\E^G)]^*$.
\end{proof}

\medskip
The proof gives more than the existence of $\Theta$.
Indeed, the image of $\iota^*$ in degree $N-1$ is the kernel of $\delta$, which is
\[
	\big(H^{N-1}(\mathrm{B}G)\otimes1\big)\ \oplus\ \FF_p\,\big(e\otimes[S(\E^G)]^*\big).
\]
In particular, the classes in the mixed summand which extend to $S(\E)$ are exactly the multiples of $e\times[S(\E^G)]^*$.

%----------------------------------------------------------------------------------------------
\medskip
\subsection{The $G$-box  criterion}
\label{sec : fixed sphere theorem}
%----------------------------------------------------------------------------------------------
In this subsection we state and prove the $G$-box criterion, which is the central topological result of the paper.

\medskip
Recall from Section~\ref{subsec : group actions} that, identifying each column $Q_j$ with the set of labels $[r]$ via $p_{j,i}\mapsto i$, a set $U\subseteq V$ is a $G$-box if every $U\cap Q_j$ is a non-empty union of $G$-orbits.
So a $G$-box is obtained by choosing, independently for every column, a non-empty union of orbits, and if $G$ has $t$ orbits on $[r]$, then there are $(2^t-1)^{n+1}$ different $G$-boxes.
The largest one is $V$, and the minimal ones have exactly one orbit in every column.
Only the induced sub-complexes $\LL[U]$ of these sets $U$ enter the assumption of the next theorem.
Figure~\ref{fig : G-boxes} shows several $G$-boxes for $r=6$.

\begin{figure}[ht]
\centering
\begin{tikzpicture}[x=5.5mm,y=5mm,
	pt/.style={circle,fill=black,inner sep=1.3pt},
	hollow/.style={circle,draw,fill=white,inner sep=1.3pt},
	lab/.style={font=\scriptsize}]
	\foreach \X/\T/\A/\B in {0/{(a)}/{1,1,1,0,1}/{0,1,0,1,1},
	                        7.5/{(b)}/{1,1,1,1,1}/{1,1,1,1,1},
	                        15/{(c)}/{0,0,1,0,0}/{1,1,0,1,1},
	                        22.5/{(d)}/{1,1,0,0,1}/{0,1,0,1,0}}{
		\begin{scope}[xshift=\X*5.5mm]
			\node[font=\small] at (2,1.1) {\T};
			\foreach \j in {0,...,4}{\node[lab] at (\j,0.3) {$\j$};}
			\foreach \a/\b in {1/3,4/6}{\draw[dashed,rounded corners=2pt] (-0.45,-\a+0.45) rectangle (4.45,-\b-0.45);}
			\foreach \m [count=\jj from 0] in \A {
				\foreach \i in {1,2,3}{\ifnum\m=1 \node[pt] at (\jj,-\i) {}; \else \node[hollow] at (\jj,-\i) {}; \fi}}
			\foreach \m [count=\jj from 0] in \B {
				\foreach \i in {4,5,6}{\ifnum\m=1 \node[pt] at (\jj,-\i) {}; \else \node[hollow] at (\jj,-\i) {}; \fi}}
			\ifdim\X pt=0pt
				\foreach \i in {1,...,6}{\node[lab,anchor=east] at (-0.55,-\i) {$\i$};}
			\fi
		\end{scope}}
	\begin{scope}[xshift=22.5*5.5mm]
		\node[pt] at (2,-1) {}; \node[pt] at (2,-2) {};
		\node[pt] at (4,-6) {};
	\end{scope}
\end{tikzpicture}
\caption{\small Subsets of the vertex set $V$ for $r=6$, $n=4$, and the group $G\cong\Z/3$ generated by the permutation $(1\,2\,3)(4\,5\,6)$ of $[6]$, whose orbits $\{1,2,3\}$ and $\{4,5,6\}$ are framed; filled vertices belong to the set.
(a), (b), (c) are $G$-boxes: in every column the set is $\{1,2,3\}$, $\{4,5,6\}$ or $\{1,\dots,6\}$, that is, a non-empty union of orbits; (b) is the $G$-box $V$.
There are $3^5=243$ $G$-boxes here, and the assumption of Theorem~\ref{th : fixed sphere} asks that $\LL[U]$ be homologically $(N-2)$-connected for each of them, and $(N-1)$-connected for $U=V$.
(d) is not a $G$-box: in column $2$ it contains only part of the orbit $\{1,2,3\}$, and in column $4$ it contains part of the orbit $\{4,5,6\}$.
The induced sub-complex $\LL[U]$ of a set $U$ which is not a $G$-box plays no role in the theorem.}
\label{fig : G-boxes}
\end{figure}

\medskip
\begin{theorem}[$G$-box criterion]
	\label{th : fixed sphere}
	Let $d\geq 1$ and $r\geq 2$ be integers, and let $N:=(d+1)(r-1)$.
	Further, take $p$ to be a prime and  $G\leq\sym_r$ an elementary abelian group isomorphic to $(\Z/p)^\alpha$ for some $\alpha\geq0$. 
	Suppose $n\geq0$ is an integer and  $a\colon\Delta_n\longrightarrow\R^d$ is an arbitrary affine map.
	
	\smallskip\noindent
	If $\LL\subseteq(\Delta_n)^{*r}_{\Delta(2)}$ is a $G$-invariant sub-complex which is homologically $(N-1)$-connected over $\FF_p$, and such that $\LL[U]$ is homologically $(N-2)$-connected over $\FF_p$ for every $G$-box $U$, then $\LL$ contains a Tverberg face of $a$, that is, there are pairwise disjoint faces $F_1,\dots,F_r$ of $\Delta_n$ with $F_1*\cdots*F_r\in\LL$ and $a(F_1)\cap\cdots\cap a(F_r)\neq\emptyset$.
\end{theorem}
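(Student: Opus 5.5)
We argue by contradiction: assume that $\LL$ contains no Tverberg face of $a$. The plan is to compare the two projections of the incidence space $T:=T_{a,\LL}$, which is a compact $G$-space by Lemma~\ref{lem : T is compact} and the discussion in Section~\ref{subsec : group actions}. All cohomology is taken with $\FF_p$ coefficients.

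\emph{The projection to $\LL$.} By Proposition~\ref{prop : projection to L} the fibres of $\pr_2$ are contractible. Lemma~\ref{lem : fibrewise} then makes $\pr_{2,G}^*\colon H^*_G(\LL)\to H^*_G(T)$ an isomorphism in all degrees. Since $\LL$ is homologically $(N-1)$-connected, the last part of Lemma~\ref{lem : fibrewise} shows that $\pi_\LL^*\colon H^i(\mathrm{B}G)\to H^i_G(\LL)$ is an isomorphism for $i\leq N-1$ and a monomorphism for $i=N$. The same therefore holds for $\pi_T^*\colon H^i(\mathrm{B}G)\to H^i_G(T)$.

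\emph{Degenerate cases.} Here the fixed sphere is either all of $S(\E)$ or empty.
\begin{compactitem}[\quad --]
\item If $G$ is trivial, every $U_{a,y}$ is a $G$-box by Lemma~\ref{lem : locality}. So all fibres of $\pr_1\colon T\to S(\E)$ are homologically $(N-2)$-connected, in particular non-empty, and Theorem~\ref{lem : Vietoris-Begle} makes $H^{N-1}(S(\E))\cong\FF_p\to H^{N-1}(T)$ injective. But $H^{N-1}(T)\cong H^{N-1}(\LL)=0$, a contradiction.
\item If $G$ has a single orbit, then $\E^G=0$, so $S(\E)^G=\emptyset$ and $e_G(\E)\neq0$ lies in the kernel of $\pi_{S(\E)}^*$ in degree $N$. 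Since $\pi_T=\pi_{S(\E)}\circ\pr_{1,G}$, this class also dies in $H^N_G(T)$. That contradicts the injectivity of $\pi_T^*$ in degree $N$.
\end{compactitem}

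\emph{Main case: $G$ nontrivial with at least two orbits.} Take the class $\Theta\in H^{N-1}_G(S(\E))$ from Lemma~\ref{lem : Thom class}.
\begin{compactenum}[\rm (1)]
\item Since $\pi_T^*$ is onto in degree $N-1$, there is $\omega\in H^{N-1}(\mathrm{B}G)$ with $\pr_{1,G}^*\Theta=\pi_T^*\omega=\pr_{1,G}^*\pi_{S(\E)}^*\omega$.
\item Let $T':=\pr_1^{-1}(S(\E^G))$, which is a closed invariant subspace, and let $p'\colon T'\to S(\E^G)$ be the restriction of $\pr_1$. By Lemma~\ref{lem : fibres over the fixed sphere} the fibres of $p'$ are complexes $\LL[U]$ with $U$ a $G$-box. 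These are homologically $(N-2)$-connected, so non-empty and $p'$ is surjective.
\item Lemma~\ref{lem : fibrewise} shows that $p_G'^*\colon H^{N-1}_G(S(\E^G))\to H^{N-1}_G(T')$ is a monomorphism.
\item Restricting the identity from (1) to $T'$ and using naturality gives $p_G'^*\big(e\times[S(\E^G)]^*\big)=p_G'^*(\omega\times1)$.
\item By injectivity, $e\times[S(\E^G)]^*=\omega\times1$ in $H^{N-1}(\mathrm{B}G\times S(\E^G))$.
\end{compactenum}
This is impossible by the Künneth splitting~\eqref{eq : Kunneth fixed sphere}: the left side lies in the mixed summand and is non-zero since $e\neq0$, while the right side lies in $H^{N-1}(\mathrm{B}G)\otimes1$. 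Hence $\LL$ contains a Tverberg face.

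\emph{Main obstacle.} The essential input, the construction of $\Theta$, is already provided by Lemma~\ref{lem : Thom class}. The step requiring most care is therefore bookkeeping. One must check that $\Theta$ is pulled back along $\pr_{1,G}$ compatibly with the inclusion $T'\subseteq T$ and with $\iota\colon S(\E^G)\to S(\E)$. One must also check that Lemma~\ref{lem : fibrewise} applies to $p'$: $T'$ is compact and $p'$ is surjective, the latter because $(N-2)$-connectivity includes non-emptiness.
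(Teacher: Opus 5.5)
Your proposal is correct and follows the paper's proof essentially step by step: the incidence space $T$, surjectivity of $\pi_T^*$ in degree $N-1$, the class $\Theta$ of Lemma~\ref{lem : Thom class}, injectivity of the pullback over the fixed sphere, and the K\"unneth contradiction. The differences are cosmetic. You treat the trivial group by a separate non-equivariant Vietoris--Begle argument, which the paper absorbs into the main case by setting $\Theta:=1\times[S(\E)]^*$ and $e:=1$. In the one-orbit case you kill $e_G(\E)$ through $\pr_1$ on $T$, whereas the paper uses the equivariant map $A/\|A\|\colon\LL\to S(\E)$ directly, which has the advantage of also working for continuous maps.
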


\medskip
The two connectivity assumptions enter the proof at different places: the $(N-1)$-connectivity of $\LL$ itself, that is, of $\LL[V]$, is used to compare the incidence space with the classifying space, and the $(N-2)$-connectivity of the complexes $\LL[U]$ is used for the fibres over the fixed sphere.
If $G$ has one orbit, then $r=p^\alpha$, $G$ acts regularly on $[r]$, the only $G$-box is $V$, and the assumption says only that $\LL$ is homologically $(N-1)$-connected.
In this case Theorem~\ref{th : fixed sphere} follows from the classical work of \"{O}zaydin~\cite{Oezaydin1987} and Volovikov~\cite{Volovikov1996}; see also~\cite[Thm.\,3.11 and Rem.\,3.12]{BlagojevicZiegler2017}.
For the sake of completeness we include a proof.

\begin{proof}[Proof of Theorem~\ref{th : fixed sphere} when $G$ has one orbit]
	As before, cohomology has coefficients in the field $\FF_p$.
	Suppose that $\LL$ contains no Tverberg face of $a$.
	By Lemma~\ref{lem : CSTM} the test map $A$ has no zero on $\LL$, and $\nu:=A/\|A\|\colon\LL\longrightarrow S(\E)$ is an equivariant map.
	Since $G$ has one orbit, the fixed subspace $\E^G$ is $0$ according to Lemma~\ref{lem : decomposition}, and therefore $e_G(\E)\neq0$ by Lemma~\ref{lem : Euler class}.
	In the Gysin sequence of the sphere bundle $S(\E)_G\longrightarrow\mathrm{B}G$ the Euler class $e_G(\E)$ lies in the kernel of $\pi_{S(\E)}^*$~\cite[\S12]{MilnorStasheff1974}; in the language of the Serre spectral sequence this is the transgression used in~\cite[Proof of Thm.\,3.11]{BlagojevicZiegler2017}.
	Hence 
	\[
	\pi_\LL^*e_G(\E)=\nu_G^*\pi_{S(\E)}^*e_G(\E)=0
	\] 
	in $H^N_G(\LL)$.
	But $\LL$ is homologically $(N-1)$-connected, so $\pi_\LL^*$ is injective in degree $N$ by Lemma~\ref{lem : fibrewise}, a contradiction.
\end{proof}

\medskip
This part of the proof is the argument of~\cite[Proof of Thm.\,3.11]{BlagojevicZiegler2017}, written with the Gysin sequence, and it uses only the equivariant map $\nu$.
For a continuous map $\Delta_n\longrightarrow\R^d$ the test map is defined by the same formula~\eqref{eq : test map}, and the first part of the proof of Lemma~\ref{lem : CSTM} shows that it has a zero on $\LL$ if and only if $\LL$ contains a Tverberg face.

\medbreak
Hence Theorem~\ref{th : fixed sphere} holds for continuous maps $\Delta_n\longrightarrow\R^d$ when $G$ has one orbit, and so does every consequence which uses only this case.

\begin{proof}[Proof of Theorem~\ref{th : fixed sphere} when $G$ has at least two orbits]
	Again, all cohomology has coefficients in $\FF_p$.
	Suppose that $\LL$ contains no Tverberg face of $a$, and let
	\[
		T:=T_{a,\LL}=\{(y,\lambda)\in S(\E)\times\LL : \supp(\lambda)\subseteq U_{a,y}\}
	\]
	be the incidence space of $\LL$ with respect to the affine map $a\colon \Delta_n\longrightarrow\R^d$, see Definition~\ref{def : incidence space}, with the projections $\pr_1\colon T\longrightarrow S(\E)$ and $\pr_2\colon T\longrightarrow\LL$.
	Since $\LL$ is $G$-invariant, $T$ is a compact $G$-space and both projections are equivariant, see Section~\ref{subsec : group actions}.
	For a $G$-space $X$ we write $\pi_X\colon X_G\longrightarrow\mathrm{B}G$ for the projection.
	We proceed in six steps.

	\medskip
	{\em Step 1:}
	It follows from Proposition~\ref{prop : projection to L} that $\pr_2$ is an equivariant surjection of compact spaces with contractible fibres, and so the homomorphism 
		\[
		(\pr_2)_G^*\colon H^*_G(\LL)\longrightarrow H^*_G(T)
		\] 
		is an isomorphism according to Lemma~\ref{lem : fibrewise}.

	\medskip
	{\em Step 2:}
		According to the homological $(N-1)$-connectivity of $\LL$ and Lemma~\ref{lem : fibrewise}, the induced map in cohomology $\pi_\LL^*\colon H^{N-1}(\mathrm{B}G)\longrightarrow H^{N-1}_G(\LL)$ is an isomorphism.
		Since $\pi_T=\pi_\LL\circ(\pr_2)_G$, Step~1 implies that also
		\[
		\pi_T^*\colon H^{N-1}(\mathrm{B}G)\longrightarrow H^{N-1}_G(T)
		\] 
		is an isomorphism.

	\medskip
	{\em Step 3:}
		If $G$ acts non-trivially on $[r]$, let $\Theta\in H^{N-1}_G(S(\E))$ and $e\in H^m(\mathrm{B}G)$ be as in Lemma~\ref{lem : Thom class}.
		This is the situation of Section~\ref{subsec : Thom class}, whose standing assumptions, and hence Lemma~\ref{lem : Thom class}, require a non-trivial action and at least two orbits.
		If $G$ acts trivially, then $S(\E^G)=S(\E)$, and we set $\Theta:=1\times[S(\E)]^*\in H^{N-1}(\mathrm{B}G\times S(\E))$, $e:=1$ and $m:=0$.
		Then $\Theta|_{S(\E^G)_G}=\Theta=e\times[S(\E^G)]^*$, so the conclusion of Lemma~\ref{lem : Thom class} holds in this case as well.
		In both cases Step~2 gives a class $\omega\in H^{N-1}(\mathrm{B}G)$ with
		\begin{equation}\label{eq:identity_step_3}
			(\pr_1)_G^*\Theta=\pi_T^*\omega \ \in \  H^{N-1}_G(T).
		\end{equation}

	\medskip
	{\em Step 4:}
		Let $T':=\pr_1^{-1}(S(\E^G))$, and let $\rho\colon T'\longrightarrow S(\E^G)$ be the restriction of $\pr_1$.
		The set $T'$ is closed in $T$ and therefore compact.
		Additionally, it is also $G$-invariant.
		In summary, $T'$ is a compact $G$-space and $\rho$ is an equivariant map.
		
		\smallskip
		Next we restrict the identity \eqref{eq:identity_step_3} to the subspace $T'_G\subseteq T_G$.
		Since $\pr_1$ maps $T'$ into $S(\E^G)$, its restriction to $T'$ is $\iota\circ\rho$, where $\iota\colon S(\E^G)\longrightarrow S(\E)$ is the inclusion, and so the restriction of $(\pr_1)_G$ to $T'_G$ is $\iota_G\circ\rho_G$.
		Hence the left side of the identity restricts to
		\[
			\big((\pr_1)_G^*\Theta\big)\big|_{T'_G}=\rho_G^*\iota_G^*\Theta=\rho_G^*\big(\Theta|_{S(\E^G)_G}\big)=\rho_G^*\big(e\times[S(\E^G)]^*\big),
		\]
		where the last equality is Lemma~\ref{lem : Thom class}, or the choice of $\Theta$ in the case of the trivial action.

		On the right side, the restriction of $\pi_T$ to $T'_G$ is $\pi_{T'}=\pi_{S(\E^G)}\circ\rho_G$.
		Since $G$ acts trivially on $S(\E^G)$, we have $S(\E^G)_G=\mathrm{B}G\times S(\E^G)$, the map $\pi_{S(\E^G)}$ is the projection onto the first factor, and so $\pi_{S(\E^G)}^*\omega=\omega\times1$.
		Therefore the right side restricts to
		\[
			\big(\pi_T^*\omega\big)\big|_{T'_G}=\pi_{T'}^*\omega=\rho_G^*\pi_{S(\E^G)}^*\omega=\rho_G^*\big(\omega\times1\big).
		\]
		Comparing both restrictions we obtain
		\begin{equation}\label{eq:identity_step_3_2}
			\rho_G^*\big(e\times[S(\E^G)]^*\big)=\rho_G^*\big(\omega\times1\big)\ \in \ H^{N-1}_G(T').
		\end{equation}

	\medskip
	{\em Step 5:}
		The fibres of $\rho$ are the complexes $\LL[U_{a,y}]$ for $y\in S(\E^G)$.
		According to Lemma~\ref{lem : fibres over the fixed sphere} every $U_{a,y}$ is a $G$-box, so by the assumption of the theorem the fibres are homologically $(N-2)$-connected.
		In particular the fibres are non-empty, so $\rho$ is surjective.
		As $T'$ and $S(\E^G)$ are compact, Lemma~\ref{lem : fibrewise} with $\tau=N-2$ applies to the equivariant map $\rho$ and implies that $\rho_G^*$ is injective in degree $N-1$.
		Thus, according to \eqref{eq:identity_step_3_2} it follows that
		\begin{equation}\label{eq:omega}
			\omega\times1=e\times[S(\E^G)]^*\ \in \ H^{N-1}(\mathrm{B}G\times S(\E^G)).
		\end{equation}

	\medskip
	{\em Step 6:}
		From Lemma~\ref{lem : decomposition} we have that $\dim S(\E^G)=(d+1)(|[r]/G|-1)-1\geq d\geq1$.
		Hence the class $[S(\E^G)]^*$ has positive degree $N-1-m$, and the K\"unneth decomposition~\eqref{eq : Kunneth fixed sphere},
\[			 H^{N-1}(\mathrm{B}G\times S(\E^G))=\big(H^{N-1}(\mathrm{B}G)\otimes H^0(S(\E^G))\big)\ \oplus\ \big(H^{m}(\mathrm{B}G)\otimes H^{N-1-m}(S(\E^G))\big),
\]
		is a direct sum of two different summands.
		The left side of~\eqref{eq:omega} lies in the first summand, while the right side lies in the second, the mixed summand.
		Comparing the components of both sides of~\eqref{eq:omega} in this direct sum, we see that both sides have to vanish: $\omega\times1=0$ and $e\times[S(\E^G)]^*=0$.
		The first equality only says that $\omega=0$, which is not a contradiction by itself, but the second one is.
		Indeed, since $H^{N-1-m}(S(\E^G))\cong\FF_p$ is generated by $[S(\E^G)]^*$, the map $H^m(\mathrm{B}G)\longrightarrow H^m(\mathrm{B}G)\otimes H^{N-1-m}(S(\E^G))$, $c\mapsto c\otimes[S(\E^G)]^*$, is an isomorphism, and so $e=0$.
		However, $e=e_G(\Eperp)\neq0$ by Lemma~\ref{lem : Euler class} when $G$ acts non-trivially, and $e=1$ when $G$ acts trivially.
\end{proof}

\medskip
For the trivial group the proof becomes a non-equivariant argument.
There the class $\Theta=[S(\E)]^*$ lives on the whole sphere, and Step~5 needs the connectivity of $\LL[U_{a,y}]$ for all $y\in S(\E)$.
This is a condition of the same kind as the one obtained from the covering scheme at the end of Section~\ref{subsec : covering scheme}.

%----------------------------------------------------------------------------------------------
%----------------------------------------------------------------------------------------------
\bigskip
\section{Proofs of the main results}
\label{sec : proofs}
\bigskip
%----------------------------------------------------------------------------------------------
%----------------------------------------------------------------------------------------------

In this section we prove the results stated in Section~\ref{sec : introduction}.
First we introduce the difference complexes and verify the assumption of the $G$-box criterion, Theorem~\ref{th : fixed sphere}, for them, which yields Theorem~\ref{th : difference} and Corollary~\ref{cor : matching}.
Then, with translated difference complexes, we double count Tverberg faces and prove Theorem~\ref{th : main} and Corollaries~\ref{cor : every r} and~\ref{cor : rate}, and, averaging over matchings, Theorem~\ref{th : averaged} and Corollary~\ref{cor : averaged rate}.

\medskip
Throughout this section $r=kq+s$, where $q=p^{\alpha}$ is a power of a prime $p$, so $\alpha\geq1$, $k\geq1$ and $s\geq0$.
Observe that in our setup the choice of $q=p^{\alpha}$ and respectively $s$ is not unique.
The set of labels $[r]$ is identified with

\[
	\Lambda=(\FF_p^{\,\alpha}\times[k])\sqcup\{\infty_1,\dots,\infty_s\}
\]
as in Section~\ref{sec : introduction}.
We let the group $G:=\FF_p^{\,\alpha}$, that is, the additive group of the vector space $\FF_p^{\,\alpha}$, which is isomorphic to $(\Z/p)^\alpha$, act on $\Lambda$ by translation of the $\FF_p^{\,\alpha}$-label:
\[
	g\cdot(u,\iota):=(u+g,\iota)
	\qquad\text{and}\qquad 
	g\cdot\infty_\ell:=\infty_\ell, \qquad 1\leq \ell\leq s.
\]
In this way $G$ becomes an elementary abelian subgroup of the symmetric group $\sym_r$ whose orbits are the $k$ blocks $\FF_p^{\,\alpha}\times\{\iota\}$, of size $q$, and the $s$ fixed labels $\infty_1,\dots,\infty_s$.
So a set $U\subseteq V$ is a $G$-box if and only if every $U\cap Q_j$ is a non-empty union of blocks and fixed labels.

%----------------------------------------------------------------------------------------------
\medskip
\subsection{Difference complexes}
\label{sec : difference complexes}
%----------------------------------------------------------------------------------------------
Difference complexes, as we will explain, are sub-complexes of the deleted join $\K$ given by constraints on the labels of the two vertices of each pair of an oriented matching.

\medskip
For a subset $\Gamma\subseteq\FF_p^{\,\alpha}$ let
\begin{multline*}
	R_\Gamma:=\big\{\big((u,\iota),(u+g,\iota')\big) : u\in\FF_p^{\,\alpha},\ g\in\Gamma,\ \iota,\iota'\in[k]\big\}\\
	\cup\ \big\{(c,c')\in\Lambda\times\Lambda : \{c,c'\}\cap\{\infty_1,\dots,\infty_s\}\neq\emptyset\big\}.
\end{multline*}
So $(c,c')\in R_\Gamma$ means that, if $c$ and $c'$ both lie in blocks, then the $\FF_p^{\,\alpha}$-label of $c'$ minus the $\FF_p^{\,\alpha}$-label of $c$ lies in $\Gamma$.
Furthermore, $(c,c')\in R_\Gamma$ implies $(g\cdot c,g\cdot c')\in R_\Gamma$ for every $g\in G$, and a pair $((u,\iota),(u,\iota'))$ with equal $\FF_p^{\,\alpha}$-labels lies in $R_\Gamma$ if and only if $0\in\Gamma$.
The relation $R_\Gamma$ is in general not symmetric: $(c,c')\in R_\Gamma$ if and only if $(c',c)\in R_{-\Gamma}$, where $-\Gamma=\{-g : g\in\Gamma\}$, and so $R_\Gamma$ is symmetric if and only if $\Gamma=-\Gamma$.

\medskip
For sets $S,S'\subseteq\Lambda$ we consider the bipartite graph $R_\Gamma[S,S']$ whose vertex set is the disjoint union $S\sqcup S'$ of a copy of $S$ and a copy of $S'$, and in which the copy of $c\in S$ is adjacent to the copy of $c'\in S'$ if $(c,c')\in R_\Gamma$.
The sets $S$ and $S'$ are not required to be disjoint. For an illustration see Figure~\ref{fig : difference graph}.

\begin{figure}[ht]
\centering
\begin{tikzpicture}[x=13mm,y=8mm,
	pt/.style={circle,fill=black,inner sep=1.4pt},
	lab/.style={font=\scriptsize}]
	\foreach \X/\G/\E in {0/{\{0,1\}}/1,6.2/{\{0\}}/0}{
		\begin{scope}[xshift=\X cm]
			\foreach \u in {0,1,2}{
				\node[pt] (a\u) at (0,-\u) {};
				\node[pt] (b\u) at (2,-\u) {};
				\node[lab,anchor=east] at (-0.1,-\u) {$(\u,1)$};
				\node[lab,anchor=west] at (2.1,-\u) {$(\u,2)$};}
			\foreach \u in {0,1,2}{\draw (a\u) -- (b\u);}
			\ifnum\E=1
				\draw (a0) -- (b1); \draw (a1) -- (b2); \draw (a2) -- (b0);
			\fi
			\node[lab] at (0,0.7) {$S$}; \node[lab] at (2,0.7) {$S'$};
			\node[font=\small] at (1,-2.8) {$\Gamma=\G$};
		\end{scope}}
\end{tikzpicture}
\caption{\small The graph $R_\Gamma[S,S']$ for $p=3$, $\alpha=1$, $k\geq2$, $S=\FF_3\times\{1\}$ and $S'=\FF_3\times\{2\}$. A vertex $(u,1)$ is adjacent to $(u+g,2)$ for $g\in\Gamma$.
If $(v,v')=(p_j,p_{j'})\in M$, $\Gamma_{v,v'}=\Gamma$ and $U$ is a $G$-box with $U\cap Q_j=S$ and $U\cap Q_{j'}=S'$, then this graph is a join factor of $\LL[U]$.
For $\Gamma=\{0,1\}$ (left) the differences $\pm1$ generate $\FF_3$, and the graph is a cycle of length six, which is connected.
In the case $\Gamma=\{0\}$ (right) the only difference is $0$, and the graph is a perfect matching with three components.}
\label{fig : difference graph}
\end{figure}

\medskip
The constraints we are imposing are on the pairs of an oriented matching.
Recall that an {\em oriented matching} $M$ on the vertex set of $\Delta_n$ is a set of ordered pairs $(v,v')$ of distinct vertices of $\Delta_n$ such that every vertex belongs to at most one pair.
In other words, $M$ is a matching together with an orientation of each of its edges.
Thus different pairs of $M$ are disjoint when considered as sets; in particular, if $(v,v')\in M$, then $(v',v)\notin M$.

\medskip
\begin{definition}
	\label{def : difference complex}
	Let $M$ be an oriented matching on the vertex set of $\Delta_n$, and suppose that to every pair $(v,v')\in M$ a subset $\Gamma_{v,v'}\subseteq\FF_p^{\,\alpha}$ is assigned, the set of {\em allowed differences} of this pair.
	The {\em difference complex} $\LL_M(\Gamma_{v,v'} : (v,v')\in M)$ is the sub-complex of $\K=(\Delta_n)^{*r}_{\Delta(2)}$ consisting of all faces $J$ with the following property: for every pair $(v,v')=(p_j,p_{j'})\in M$, if $p_{j,c}\in J$ and $p_{j',c'}\in J$ for some labels $c,c'\in\Lambda$, then $(c,c')\in R_{\Gamma_{v,v'}}$.
	If $\Gamma_{v,v'}=\Gamma$ for all pairs, we write $\LL_M(\Gamma)$.
\end{definition}

\medskip
Since different pairs of $M$ are disjoint, the requirement placed on the pair $(v,v')=(p_j,p_{j'})\in M$ in Definition~\ref{def : difference complex} only involves the two columns $Q_j$ and $Q_{j'}$, so different pairs constrain disjoint sets of columns.
Consequently, $\LL_M(\Gamma_{v,v'} : (v,v')\in M)$ is the join of the bipartite graphs $R_{\Gamma_{v,v'}}[\Lambda,\Lambda]$, viewed as $1$-dimensional complexes on $Q_j\sqcup Q_{j'}$ for $(v,v')=(p_j,p_{j'})\in M$, and of the $0$-dimensional complexes $Q_j$ for the vertices $p_j$ not covered by $M$; see Figure~\ref{fig : difference complex} for an example.
A facet of $\K$ lies in $\LL_M(\Gamma_{v,v'} : (v,v')\in M)$ if and only if it is a labelling of all vertices of $\Delta_n$ with the following property: if $v$ has the label $(u,\iota)$ and $v'$ has the label $(u',\iota')$ for a pair $(v,v')\in M$, then $u'-u\in\Gamma_{v,v'}$.
So the set $\Gamma_{v,v'}$ constrains only the labels of the two vertices $v$ and $v'$, and only when both labels lie in blocks; the vertices not covered by $M$ are not constrained at all.
The sets $\Gamma_{v,v'}$ for different pairs are independent of each other, and the order within a pair matters.
If a pair $(v,v')\in M$ is replaced by the reversed pair $(v',v)$, which gives another oriented matching, then the same difference complex is obtained when the set $\Gamma_{v,v'}$ is replaced by $-\Gamma_{v,v'}$.

\begin{figure}[ht]
\centering
\begin{tikzpicture}[x=11mm,y=7.5mm,
	pt/.style={circle,fill=black,inner sep=1.3pt},
	lab/.style={font=\scriptsize},
	blk/.style={thin},
	inf/.style={thin,black!45},
	fac/.style={line width=1.8pt}]
	\newcommand{\diffgraph}[6]{% #1 xshift, #2 list of differences, #3 left col name, #4 right col name, #5 facet left row, #6 facet right row
		\begin{scope}[xshift=#1 cm]
			\foreach \u/\y in {0/0,1/-1,2/-2}{
				\foreach \g in {#2}{
					\pgfmathtruncatemacro{\w}{mod(\u+\g,3)}
					\draw[blk] (0,\y) -- (1.6,-\w);}}
			\foreach \y in {0,-1,-2,-3}{\draw[inf] (0,-3) -- (1.6,\y); \draw[inf] (0,\y) -- (1.6,-3);}
			\draw[fac] (0,#5) -- (1.6,#6);
			\foreach \y/\l in {0/0,-1/1,-2/2,-3/\infty}{
				\node[pt] at (0,\y) {}; \node[pt] at (1.6,\y) {};
				\node[lab,anchor=east] at (-0.12,\y) {$\l$};
				\node[lab,anchor=west] at (1.72,\y) {$\l$};}
			\node[lab] at (0,0.75) {#3}; \node[lab] at (1.6,0.75) {#4};
		\end{scope}}
	\diffgraph{0}{0,1}{$Q_0$}{$Q_1$}{-1}{-2}
	\node at (2.85,-1.5) {$*$};
	\diffgraph{4.0}{1,2}{$Q_2$}{$Q_3$}{0}{-2}
	\node at (6.85,-1.5) {$*$};
	\begin{scope}[xshift=8cm]
		\foreach \y/\l in {0/0,-1/1,-2/2,-3/\infty}{\node[pt] at (0,\y) {}; \node[lab,anchor=west] at (0.12,\y) {$\l$};}
		\node[circle,draw,line width=1.2pt,inner sep=2.6pt] at (0,-3) {};
		\node[lab] at (0,0.75) {$Q_4$};
	\end{scope}
	\node[font=\small] at (0.8,-3.85) {$\Gamma_{p_0,p_1}=\{0,1\}$};
	\node[font=\small] at (4.8,-3.85) {$\Gamma_{p_2,p_3}=\{1,2\}$};
\end{tikzpicture}
\caption{\small The difference complex $\LL=\LL_M(\Gamma_{v,v'} : (v,v')\in M)$ for $r=4=1\cdot3+1$, so $p=3$, $\alpha=1$, $k=1$, $s=1$ and $\Lambda=\FF_3\sqcup\{\infty\}$, without block indices, and for $n=4$, the oriented matching $M=\{(p_0,p_1),(p_2,p_3)\}$, $\Gamma_{p_0,p_1}=\{0,1\}$ and $\Gamma_{p_2,p_3}=\{1,2\}$.
The complex is the join $\LL=R_{\{0,1\}}[Q_0,Q_1]*R_{\{1,2\}}[Q_2,Q_3]*Q_4$ of two bipartite graphs and the $0$-dimensional complex $Q_4$, since the vertex $p_4$ is not covered by $M$.
In the left graph a label $u$ of $p_0$ is joined to the labels $u$ and $u+1$ of $p_1$, and in the middle graph a label $u$ of $p_2$ is joined to the labels $u+1$ and $u+2$ of $p_3$; the grey edges join the fixed label $\infty$ to every label on the other side.
A facet of $\LL$ consists of one edge from each graph and one vertex of $Q_4$. As an example, the highlighted facet $p_0,\dots,p_4$ is represented by $1,2,0,2,\infty$, and indeed $2-1\in\{0,1\}$ and $2-0\in\{1,2\}$.
Each graph is connected, and $\LL$ is a join of top-concentrated complexes of dimension $4=n$, as in Proposition~\ref{prop : difference complex}.}
\label{fig : difference complex}
\end{figure}
Two choices are used in this paper.
For the constant choice $\Gamma_{v,v'}=\Gamma$ the facets of $\LL_M(\Gamma)$ are exactly the labellings in Theorem~\ref{th : difference}, and the translates $\Gamma_{v,v'}=\Gamma+h_{v,v'}$ appear in the counting arguments of Section~\ref{sec : counting}.

\medskip
We say that a non-empty complex $K$ is {\em top-concentrated} if it is homologically $(\dim K-1)$-connected.
By the K\"unneth formula for joins~\eqref{eq : Kunneth for joins} and induction, a join $K_1*\cdots*K_l$ of top-concentrated complexes is top-concentrated of dimension $\sum_i(\dim K_i+1)-1$.
A non-empty $0$-dimensional complex, and a connected graph with at least one edge, are top-concentrated.

\medskip
\begin{proposition}
	\label{prop : difference complex}
	Let $M$ be an oriented matching on the vertex set of $\Delta_n$, and let $\Gamma_{v,v'}\subseteq\FF_p^{\,\alpha}$ for $(v,v')\in M$ be sets such that for every pair $(v,v')$ in $M$ the differences $\{g-g' : g,g'\in\Gamma_{v,v'}\}$ generate the group $\FF_p^{\,\alpha}$.
	Set $\LL:=\LL_M(\Gamma_{v,v'} : (v,v')\in M)$.
	Then:
	\begin{compactenum}[\rm (1)]
		\item $\LL$ is a $G$-invariant simplicial complex,
		\item for every pair $(v,v')\in M$, every $c\in\Lambda$ and every non-empty union $S$ of blocks and fixed labels there are $c',c''\in S$ with $(c,c')\in R_{\Gamma_{v,v'}}$ and $(c'',c)\in R_{\Gamma_{v,v'}}$,
		\item the induced simplicial complex $\LL[U]$ is top-concentrated of dimension $n$ for every $G$-box $U$. In particular, if $n\geq N$, then $\LL$ satisfies the assumption of Theorem~\ref{th : fixed sphere}.
	\end{compactenum}
\end{proposition}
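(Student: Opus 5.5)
The plan is to exploit the join structure of $\LL$ noted after Definition~\ref{def : difference complex}, and to check each claim one pair $(v,v')=(p_j,p_{j'})\in M$ at a time. For (1), the faces of $\LL$ are defined by the conditions $(c,c')\in R_{\Gamma_{v,v'}}$. These conditions are inherited by subsets, so $\LL$ is a simplicial complex. They are also preserved by translations, because $(c,c')\in R_\Gamma$ implies $(g\cdot c,g\cdot c')\in R_\Gamma$ for every $g\in G$. Finally, $\K$ itself is $G$-invariant, so $\LL$ is a $G$-invariant sub-complex.

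For (2), fix $c\in\Lambda$ and a non-empty union $S$ of blocks and fixed labels.
\begin{compactitem}[\quad --]
\item If $c$ is a fixed label, every $c'\in S$ works, since pairs involving some $\infty_\ell$ always lie in $R_\Gamma$.
\item If $S$ contains a fixed label $\infty_\ell$, take $c'=c''=\infty_\ell$.
\item Otherwise $c=(u,\iota)$ and $S$ contains a whole block $\FF_p^{\,\alpha}\times\{\iota'\}$. The set $\Gamma$ is non-empty, since its differences generate $\FF_p^{\,\alpha}$. For $g\in\Gamma$ take $c'=(u+g,\iota')$ and $c''=(u-g,\iota')$, both of which lie in $S$.
\end{compactitem}

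For (3), let $U$ be a $G$-box and write $S_j:=U\cap Q_j$, viewed as a subset of $\Lambda$. By the join description, $\LL[U]$ is the join of the bipartite graphs $R_{\Gamma_{v,v'}}[S_j,S_{j'}]$ over the pairs $(p_j,p_{j'})\in M$, and of the non-empty $0$-dimensional complexes $S_j$ over the uncovered vertices. By the remark on joins of top-concentrated complexes, it suffices to show that each bipartite graph is connected and has an edge. The join then has dimension $\sum(\dim+1)-1=n$, since each graph contributes $2$ and each point set contributes $1$. By (2), every vertex of each side has a neighbour on the other side, so there is an edge and no isolated vertex.

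Connectivity of these graphs is the main obstacle. I would handle it in the following order.
\begin{compactenum}[\quad (i)]
\item Suppose first that both $S$ and $S'$ contain fixed labels. Then $\infty\in S$ is adjacent to everything in $S'$, and vice versa, so the graph is connected.
\item Suppose $S'$ contains some $\infty_\ell$. Then all of $S$ lies in one component, and so does everything in $S'$, because each vertex of $S'$ has a neighbour in $S$. The case where $S$ contains a fixed label is symmetric.
\item The remaining case is that $S$ and $S'$ are unions of blocks, with a block $B$ in $S$ and a block $B'$ in $S'$.
\end{compactenum}
In case (iii), first walk from $(u,\iota)\in B$ to $(u+g,\iota')\in B'$ and back to $(u+g-g',\iota)\in B$. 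These two-step walks realise the translations by differences $g-g'$, which generate $\FF_p^{\,\alpha}$. Hence all of $B$ is connected, and so is its neighbourhood in $B'$, which is $B'$ entirely. Choosing the other block in $S$ through the same $B'$ shows that all blocks of $S$ and of $S'$ lie in one component.

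The final sentence of (3) then follows directly. A top-concentrated complex of dimension $n\geq N$ is homologically $(n-1)$-connected, hence $(N-1)$-connected. Applied to $U=V$ this gives the connectivity of $\LL$, and applied to all $G$-boxes it gives the connectivity of each $\LL[U]$. Together with (1), these are the hypotheses of Theorem~\ref{th : fixed sphere}.
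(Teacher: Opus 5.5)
Your proof is correct and follows the paper's argument essentially step by step: $G$-invariance of $R_\Gamma$ gives (1), the same three-case choice of $c',c''$ gives (2), and for (3) you use the join decomposition into bipartite graphs and point sets, with connectivity of $R_\Gamma[S,S']$ obtained either from a fixed label or from two-step walks realising the differences $g-g'$. The differences are only cosmetic. You glue further blocks through a shared block $B'$, whereas the paper lets the block index vary inside the two-step path. You also spell out the final ``in particular'' clause, which the paper leaves implicit.
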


\begin{proof}
	Fix a pair $(v,v')\in M$ and write $R:=R_{\Gamma_{v,v'}}$ and $\Gamma:=\Gamma_{v,v'}$.
	Since the differences of $\Gamma$ generate the non-trivial group $\FF_p^{\,\alpha}$, the set $\Gamma$ is not empty.

	\medskip
	(1) A translation $g$ maps $\big((u,\iota),(u+h,\iota')\big)$ to $\big((u+g,\iota),(u+g+h,\iota')\big)$, which again lies in $R$, and it fixes the labels $\infty_\ell$.
	So $R$ is invariant under the diagonal action of $G$, and $\LL$ is $G$-invariant.

	\medskip
	(2) If $S$ contains a fixed label, take $c'=c''$ to be this label.
	Otherwise $S$ contains a block $\FF_p^{\,\alpha}\times\{\iota'\}$.
	If $c$ is a fixed label, take any $c'=c''\in S$.
	For $c=(u,\iota)$ take $h\in\Gamma$ and set $c'=(u+h,\iota')$ and $c''=(u-h,\iota')$.

	\medskip
	(3) Let $U$ be a $G$-box and $U_j:=U\cap Q_j$, viewed as a subset of $\Lambda$.
	Then $\LL[U]$ is the join of the bipartite graphs $R_{\Gamma_{v,v'}}[U_j,U_{j'}]$ for $(v,v')=(p_j,p_{j'})\in M$ and of the non-empty $0$-dimensional complexes $U_j$ for the vertices $p_j$ not covered by $M$.
	
	By the discussion above it suffices to show that the graph $R[S,S']$ is connected for all non-empty unions $S,S'$ of blocks and fixed labels.
	According to~(2) the graph $R[S,S']$ has at least an edge, and so $\LL[U]$ has dimension $n$.
	Additionally, property~(2) shows that every vertex of $R[S,S']$ has a neighbour in the other side.
	If $S'$ contains a fixed label $\infty_\ell$, then $\infty_\ell$ is adjacent to all vertices of $S$, and every vertex of $S'$ has a neighbour in $S$, so $R[S,S']$ is connected. The case that $S$ contains a fixed label is analogous.
	Otherwise $S=\FF_p^{\,\alpha}\times I$ and $S'=\FF_p^{\,\alpha}\times I'$ for non-empty sets $I,I'\subseteq[k]$.
	Recall that $R[S,S']$ is an undirected graph whose edges join a vertex $c\in S$ to a vertex $c'\in S'$ with $(c,c')\in R$.
	A path may traverse an edge in either direction, and we write $c\sim c'$ when $c$ and $c'$ are adjacent.
	For $h,h'\in\Gamma$, $\iota,\iota''\in I$ and $\iota'\in I'$ we have
	\[
		(u,\iota)\ \sim\ (u+h,\iota')\ \sim\ (u+h-h',\iota''),
	\]
	because $\big((u,\iota),(u+h,\iota')\big)\in R$ and $\big((u+h-h',\iota''),(u+h,\iota')\big)\in R$.
	This is a path of length two in $R[S,S']$ from $S$ to $S'$ and back to $S$; it changes the $\FF_p^{\,\alpha}$-label by the difference $h-h'$ of two elements of $\Gamma$, and it changes the block index within $I$ arbitrarily.
	Concatenating such paths, we see that the connected component of $(u,\iota)$ in $R[S,S']$ contains every vertex $(u+w,\iota'')\in S$ for which $w$ is a finite sum of differences of elements of $\Gamma$.
	These sums form the subgroup of $\FF_p^{\,\alpha}$ generated by the differences, which is $\FF_p^{\,\alpha}$ by assumption.
	Hence for every $u'\in\FF_p^{\,\alpha}$ and every $\iota''\in I$ the vertex $(u',\iota'')$ lies in the component of $(u,\iota)$, that is, all of $S$ lies in one connected component.
	Every vertex of $S'$ has a neighbour in $S$ by~(2), so this component contains $S'$ as well, and the graph $R[S,S']$ is connected.
\end{proof}

\medskip
The heart of the proof of Proposition~\ref{prop : difference complex}(3) is the connectivity of the graph $R_\Gamma[S,S']$ when $S$ and $S'$ are single blocks, illustrated in Figure~\ref{fig : difference graph} for $p=3$ and $\alpha=1$.

\medskip
The hypothesis on the differences is also necessary for~(3), for a suitable $G$-box $U$.
Indeed, suppose that for some pair $(v,v')=(p_j,p_{j'})\in M$ the differences of $\Gamma:=\Gamma_{v,v'}$ generate a proper subgroup $H$ of $\FF_p^{\,\alpha}$; recall that every block has $q\geq2$ elements.
Let $U$ be the $G$-box with $U_\ell:=U\cap Q_\ell=\FF_p^{\,\alpha}\times\{1\}$ for all $0\leq \ell\leq n$.
If $\Gamma=\emptyset$, the graph $R_\Gamma[U_j,U_{j'}]$ has no edges, and $\LL[U]$ has dimension smaller than $n$.
Otherwise $\Gamma\neq\emptyset$.
The graph $R_\Gamma[U_j,U_{j'}]$ has as its vertex set two copies of the block $\FF_p^{\,\alpha}\times\{1\}$, one in the column $Q_j$ and one in the column $Q_{j'}$, and a vertex $(u,1)$ of the first copy is adjacent to the vertices $(u+g,1)$, $g\in\Gamma$, of the second copy.
A path in this graph alternates between the two copies.
In particular, a path of length two from a vertex $(u,1)$ of the first copy back to the first copy passes through a vertex $(u+g,1)$ of the second copy, $g\in\Gamma$, and ends at a vertex $(u+g-g',1)$ with $g'\in\Gamma$.
Hence, it changes the $\FF_p^{\,\alpha}$-label by a difference of two elements of $\Gamma$, and a path of even length changes it by a sum of such differences, that is, by an element of $H$.
Consequently the connected component of the vertex $(0,1)$ of the first copy meets the first copy exactly in the vertices $(w,1)$ with $w\in H$.
Since $H\neq\FF_p^{\,\alpha}$, this component misses the vertices $(u,1)$ of the first copy with $u\notin H$, so the graph $R_\Gamma[U_j,U_{j'}]$ is disconnected and $\widetilde H_0(R_\Gamma[U_j,U_{j'}])\neq0$, although the graph has dimension $1$.
Every other join factor of $\LL[U]$ has non-trivial reduced homology as well: a $0$-dimensional factor $U_\ell$ consists of $q\geq2$ points, and a bipartite graph factor has $2q$ vertices and a number of edges divisible by $q$, hence different from $2q-1$, so it is not a tree.
By the K\"unneth formula~\eqref{eq : Kunneth for joins}, either $\LL[U]$ has dimension smaller than $n$, or it has non-trivial reduced homology in a degree smaller than $n$; in both cases it is not top-concentrated of dimension $n$.

\medskip
It is important to note that block indices are not constrained in the definition of $R_\Gamma$.
For example, if one also required $\iota=\iota'$, then for $k\geq2$ the graph between two different blocks would have no edges.
If the two columns of every pair of $M$ carry two different single blocks of a $G$-box $U$, then $\LL[U]$ would be a join of non-empty $0$-dimensional complexes, of dimension $n-|M|$ and with non-trivial reduced homology in this degree.
So the assumption of Theorem~\ref{th : fixed sphere} would fail whenever $n-|M|\leq N-2$, for instance for $n=N$ and the matchings with $\lfloor(N+1)/2\rfloor$ pairs used in Section~\ref{sec : counting}, since $k\geq2$ forces $N\geq6$.

%----------------------------------------------------------------------------------------------
\medskip
\subsection{Proof of Theorem~\ref{th : difference}}
%----------------------------------------------------------------------------------------------
We prove the theorem for affine maps $a\colon\Delta_n\longrightarrow\R^d$ with $n\geq N$.
Let $\LL:=\LL_M(\Gamma)\subseteq(\Delta_n)^{*r}_{\Delta(2)}$ be the difference complex of Definition~\ref{def : difference complex}.
By Proposition~\ref{prop : difference complex}, $\LL$ is $G$-invariant and satisfies the assumption of Theorem~\ref{th : fixed sphere}.
Theorem~\ref{th : fixed sphere} therefore gives a Tverberg face $J$ of $a$ in $\LL$.

\medskip
We extend $J$ to a facet $J'$ of $(\Delta_n)^{*r}_{\Delta(2)}$ which lies in $\LL$.

For a pair $(v,v')\in M$ such that $J$ labels exactly one of $v$ and $v'$, we label the other vertex with a label related to the given one by $R_\Gamma$, in the correct order; for a pair such that $J$ labels neither $v$ nor $v'$, we label them by $c$ and $c'$ with $(c,c')\in R_\Gamma$. This is possible by Proposition~\ref{prop : difference complex}(2) with $S=\Lambda$.
The vertices not covered by $M$ and not labelled by $J$ get arbitrary labels.
Then $J'\in\LL$, and $J'$ is a Tverberg face of $a$ by Lemma~\ref{lem : CSTM}, since $J'\supseteq J$.

\medskip
So $J'$ is the Tverberg face of a Tverberg partition $(F_c : c\in\Lambda)$ of $a$ whose faces contain all vertices of $\Delta_n$.
If $(v,v')\in M$, $v\in F_{(u,\iota)}$ and $v'\in F_{(u',\iota')}$, then $J'\in\LL$ gives $\big((u,\iota),(u',\iota')\big)\in R_\Gamma$, that is, $u'-u\in\Gamma$.
\qed

%----------------------------------------------------------------------------------------------
\medskip
\subsection{Proof of Corollary~\ref{cor : matching}}
%----------------------------------------------------------------------------------------------
We prove the corollary for affine maps $a\colon\Delta_n\longrightarrow\R^d$ with $n\geq N$.
If $r$ is not a power of $2$, an odd prime $p$ divides $r$.
Otherwise $r=2^\ell$ with $\ell\geq2$, and $4$ divides $r$.
So there is a prime power $q=p^\alpha\geq3$ which divides $r$.
Let $k=r/q$, $s=0$, and
\begin{compactitem}[\quad --]
	\item $\Gamma:=\{1,2\}\subseteq\FF_p$  if $\alpha=1$, and
	\item $ \Gamma:=\{e_1,\dots,e_\alpha,e_1+e_2\}\subseteq\FF_p^{\,\alpha}$ if $\alpha\geq2$.
\end{compactitem}
If $\alpha=1$, then $p\geq3$, so $0\notin\Gamma$, and $2-1=1$ generates $\FF_p$.
For $\alpha\geq2$ we have $0\notin\Gamma$, and the differences generate $\FF_p^{\,\alpha}$, since $(e_1+e_2)-e_2=e_1$ and $e_i=(e_i-e_1)+e_1$.

Orient the edges of the matching arbitrarily, and apply Theorem~\ref{th : difference}.
Since $s=0$, every face of the Tverberg partition is labelled by an element of $\FF_p^{\,\alpha}\times[k]$, and for every edge the two vertices lie in faces whose $\FF_p^{\,\alpha}$-labels differ by an element of $\Gamma$; as $0\notin\Gamma$, the two faces are different.
\qed

%----------------------------------------------------------------------------------------------
\medskip
\subsection{Counting with translated difference complexes}
\label{sec : counting}
%----------------------------------------------------------------------------------------------

For the proof of Theorem~\ref{th : main} we count the Tverberg faces of an affine map with the help of translated difference complexes. Our approach is as follows.
In Lemma~\ref{lem : free relabelling} we see that the action of the group $G$ permutes the Tverberg faces freely, then the translated difference complexes which contain a given facet are counted in Lemma~\ref{lem : number of translations}, and the exponent appearing in this count is then bounded by Lemma~\ref{lem : number of free pairs}.
Throughout this subsection let $n=N$.

\medskip
\begin{lemma}
	\label{lem : free relabelling}
	Let $a\colon\Delta_N\longrightarrow\R^d$ be an affine map.
	\begin{compactenum}[\rm (1)]
		\item Every unordered Tverberg partition of $a$ gives exactly $r!$ Tverberg faces of $a$.
		\item The group $G$ acts freely on the set of Tverberg faces of $a$ by relabelling.
		\item If $a$ is in general position, then every Tverberg face of $a$ is a facet of $\K$ in which every label is used at most $d+1$ times.
	\end{compactenum}
\end{lemma}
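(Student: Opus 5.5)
The plan is to prove the three parts separately. All three are elementary: they use only the definition of a Tverberg face as a face $F_1*\cdots*F_r$ of $\K$ with $(F_1,\dots,F_r)$ an ordered Tverberg partition, together with the discussion of general position in the introduction.

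For (1), fix an unordered Tverberg partition $\{F_1,\dots,F_r\}$. Its $r!$ orderings $(F_{\sigma(1)},\dots,F_{\sigma(r)})$, $\sigma\in\sym_r$, are ordered Tverberg partitions, and each gives the face $J_\sigma=F_{\sigma(1)}*\cdots*F_{\sigma(r)}$ of $\K$. Here $J_\sigma$ is the labelling which gives every vertex of $F_{\sigma(i)}$ the label $i$. It remains to check that the $J_\sigma$ are pairwise distinct. By the proof of Lemma~\ref{lem : CSTM}, every face of a Tverberg partition is non-empty, and the faces are pairwise disjoint, so they are pairwise distinct. Hence from $J_\sigma$ one reads off the face labelled $i$ for every $i$, and so one recovers $\sigma$. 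Conversely, every Tverberg face arises in this way from the unordered partition of its faces. So the map from Tverberg faces to unordered Tverberg partitions is exactly $r!$-to-one.

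For (2), an element $g\in G$ acts on $\K$ by $p_{j,c}\mapsto p_{j,g\cdot c}$. This sends $F_1*\cdots*F_r$ to the face whose $g(i)$-th face is $F_i$, a reordering of the same collection. Hence $g$ maps Tverberg faces to Tverberg faces; alternatively, use the equivariance of $A$ and Lemma~\ref{lem : CSTM}. For freeness, suppose $g\cdot J=J$ with $g\neq0$. Since $g\neq 0$, the translation $u\mapsto u+g$ has no fixed point in $\FF_p^{\,\alpha}$, so $g$ moves every label $(u,\iota)$ of every block; only the fixed labels $\infty_\ell$ stay put. Because $k\geq1$, the label $c=(0,1)$ exists, and by (1) the face $F_c$ of $J$ is non-empty. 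Then $g\cdot J=J$ gives $F_{g\cdot c}=F_c$ with $g\cdot c\neq c$. This contradicts that distinct faces of a Tverberg partition are disjoint and non-empty. So all stabilisers are trivial.

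For (3), we use the general position discussion of the introduction with $n=N$. There it is shown that every Tverberg partition $(F_1,\dots,F_r)$ of a map in general position satisfies $\sum_j m_j=N+1$ and $m_j\leq d+1$, where $m_j$ is the number of vertices of $F_j$. So the faces cover all vertices, and the corresponding face of $\K$ labels every vertex of $\Delta_N$; thus it meets every column $Q_j$ and is a facet of $\K$. The label $i$ is used exactly $m_i\leq d+1$ times. The only point needing care in the whole lemma is the non-emptiness of faces, used in (1) and (2), and it is supplied by Lemma~\ref{lem : CSTM}; no step is a real obstacle.
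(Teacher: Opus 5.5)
Your proof is correct and follows the paper's argument essentially verbatim. In (1) the $r!$ labellings are distinct because the faces are non-empty and pairwise disjoint; in (2) freeness follows by tracking where $g\neq 0$ sends the face labelled $(0,1)$; and (3) is read off from the general position discussion in the introduction. (Non-emptiness of the faces is immediate from the definition, since an empty face would make $a(F_1)\cap\cdots\cap a(F_r)$ empty, so you do not need to route it through Lemma~\ref{lem : CSTM}.)
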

\begin{proof}
	(1) An unordered Tverberg partition $\{F_1,\dots,F_r\}$ gives one Tverberg face for each bijection between the faces $F_i$ and the labels, and these faces of $\K$ are pairwise different, because the faces $F_i$ are non-empty and pairwise disjoint.
	(2) Acting by $g\in G$ does not change the faces of the partition, so it maps Tverberg faces to Tverberg faces.
	If $g\neq0$, then $g$ gives the label $(g,1)\neq(0,1)$ to the face with the label $(0,1)$, and so it moves every Tverberg face.
	(3) This is the discussion of general position in Section~\ref{sec : introduction}: in general position a Tverberg partition uses all vertices of $\Delta_N$, and each of its faces has at most $d+1$ vertices.
\end{proof}

\medskip
Let $M$ be an oriented matching on the vertex set of $\Delta_N$, and $\Gamma\subseteq\FF_p^{\,\alpha}$.
For a family of translations $h=(h_{v,v'} : (v,v')\in M) \in(\FF_p^{\,\alpha})^M$ we consider the difference complex
\[
	\LL_h:=\LL_M(\Gamma+h_{v,v'} : (v,v')\in M),
\]
where $\Gamma+h_{v,v'}=\{g+h_{v,v'} : g\in\Gamma\}$.
There are $q^{|M|}$ families $h$.

\medskip
Recall that a facet $J$ of $\K$ is a labelling of all vertices of $\Delta_N$ by $\Lambda$, the vertex $p_j$ receiving the label $c$ when $p_{j,c}\in J$.
For a facet $J$ of $\K$ we denote by $X_M(J)$ the number of pairs $(v,v')\in M$ such that the labels of $v$ and of $v'$ in $J$ both lie in blocks, that is, both are different from $\infty_1,\dots,\infty_s$.
Clearly $X_M(J)=|M|$ when $J$ uses no fixed label.
Figure~\ref{fig : X} shows two facets with different values of $X_M$.

\begin{figure}[ht]
\centering
\begin{tikzpicture}[x=10.5mm,y=7.2mm,
	pt/.style={circle,fill=black!35,inner sep=1.1pt},
	sel/.style={circle,fill=black,inner sep=1.9pt},
	lab/.style={font=\scriptsize},
	cnt/.style={line width=1.1pt},
	ncnt/.style={line width=1.1pt,black!40,dash pattern=on 2.2pt off 1.1pt}]
	\begin{scope}
		\foreach \j in {0,...,6}{\foreach \y in {0,-1,-2,-3}{\node[pt] at (\j,\y) {};} \node[lab] at (\j,0.7) {$Q_{\j}$};}
		\foreach \y/\l in {0/0,-1/1,-2/2,-3/\infty}{\node[lab,anchor=east] at (-0.3,\y) {$\l$};}
		\draw[cnt] (0,-1) -- (1,-2);
		\draw[ncnt] (2,0) -- (3,-3);
		\draw[cnt] (4,-2) -- (5,-1);
		\foreach \j/\y in {0/-1,1/-2,2/0,3/-3,4/-2,5/-1,6/-3}{\node[sel] at (\j,\y) {};}
		\node[font=\small] at (3,-3.9) {$X_M(J)=2$};
	\end{scope}
	\begin{scope}[xshift=8.3cm]
		\foreach \j in {0,...,6}{\foreach \y in {0,-1,-2,-3}{\node[pt] at (\j,\y) {};} \node[lab] at (\j,0.7) {$Q_{\j}$};}
		\foreach \y/\l in {0/0,-1/1,-2/2,-3/\infty}{\node[lab,anchor=east] at (-0.3,\y) {$\l$};}
		\draw[ncnt] (0,-3) -- (1,-2);
		\draw[cnt] (2,0) -- (3,-1);
		\draw[ncnt] (4,-3) -- (5,-1);
		\foreach \j/\y in {0/-3,1/-2,2/0,3/-1,4/-3,5/-1,6/-2}{\node[sel] at (\j,\y) {};}
		\node[font=\small] at (3,-3.9) {$X_M(J')=1$};
	\end{scope}
\end{tikzpicture}
\caption{\small The number $X_M(J)$ for $r=4=1\cdot3+1$, $\Lambda=\FF_3\sqcup\{\infty\}$, $d=1$, so $N=6$, and the oriented matching $M=\{(p_0,p_1),(p_2,p_3),(p_4,p_5)\}$ with $\lfloor(N+1)/2\rfloor=3$ pairs. The vertex $p_6$ is not covered by $M$.
A facet of $\K$ chooses one vertex in each column, drawn as a large dot, and the pairs of $M$ are indicated by the segments joining the chosen vertices of the two columns of a pair.
A pair contributes to $X_M$ when both chosen labels lie in the block $\FF_3$ (solid segments), and it does not when one of them is $\infty$ (dashed segments).
The facet $J$ on the left labels $p_0,\dots,p_6$ by $1,2,0,\infty,2,1,\infty$ and has $X_M(J)=2$, whereas the facet $J'$ on the right labels them by $\infty,2,0,1,\infty,1,2$ and has $X_M(J')=1$. That is the lower bound $(r-1-2s)(d+1)/2=1$ of Lemma~\ref{lem : number of free pairs}, since $J'$ uses the label $\infty$ the maximal number $d+1=2$ of times, both times on matched vertices.
For $\Gamma=\{0,1\}$ the number of translation families $h$ with $J\in\LL_h$ is $|\Gamma|^{X_M(J)}q^{|M|-X_M(J)}=2^2\cdot3=12$ by Lemma~\ref{lem : number of translations}, and for $J'$ it is $2\cdot3^2=18$.}
\label{fig : X}
\end{figure}

\medskip
\begin{lemma}
	\label{lem : number of translations}
	For every facet $J$ of $\K$ the number of families $h\in(\FF_p^{\,\alpha})^M$ with $J\in\LL_h$ is
	\[
		|\Gamma|^{X_M(J)}\cdot q^{|M|-X_M(J)} .
	\]
\end{lemma}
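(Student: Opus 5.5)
The count factorises over the pairs of $M$, because membership of the facet $J$ in $\LL_h$ is a conjunction of conditions, one for each pair $(v,v')\in M$. Each such condition involves only the single coordinate $h_{v,v'}$ of the family $h$. By Definition~\ref{def : difference complex}, $J\in\LL_h$ holds if and only if, for every pair $(v,v')=(p_j,p_{j'})\in M$ with labels $c$ of $v$ and $c'$ of $v'$ in $J$, we have $(c,c')\in R_{\Gamma+h_{v,v'}}$. The vertices not covered by $M$ impose no condition. Hence the set of admissible families is the product over $(v,v')\in M$ of the sets $A_{v,v'}(J):=\{h\in\FF_p^{\,\alpha} : (c,c')\in R_{\Gamma+h}\}$. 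The number we want is therefore $\prod_{(v,v')\in M}|A_{v,v'}(J)|$.

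It then remains to compute $|A_{v,v'}(J)|$ in two cases, according to whether the pair is counted by $X_M(J)$.

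\emph{Case 1: one of the labels is fixed.} If at least one of $c,c'$ is a fixed label $\infty_\ell$, then $(c,c')\in R_{\Gamma+h}$ for every $h$ by the second part of the definition of $R_{\Gamma+h}$. So $|A_{v,v'}(J)|=q$.

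\emph{Case 2: both labels lie in blocks.} Write $c=(u,\iota)$ and $c'=(u',\iota')$. Then $(c,c')\in R_{\Gamma+h}$ if and only if $u'-u\in\Gamma+h$, since block indices are unconstrained. This is equivalent to $h\in (u'-u)-\Gamma$. The map $g\mapsto u'-u-g$ is a bijection from $\Gamma$ onto this set, so $|A_{v,v'}(J)|=|\Gamma|$.

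Exactly $X_M(J)$ pairs fall into Case 2 and the remaining $|M|-X_M(J)$ pairs into Case 1. Multiplying gives $|\Gamma|^{X_M(J)}q^{|M|-X_M(J)}$.

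The only point needing care is the independence claim in the first step. It holds because different pairs of $M$ are disjoint, so each coordinate $h_{v,v'}$ enters only the condition of its own pair. Beyond that the argument is a direct unpacking of the definitions, and I expect no real obstacle.
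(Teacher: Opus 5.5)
Your proposal is correct and follows essentially the same route as the paper's proof. The paper likewise writes the admissible families as a product of per-pair sets $\Omega_{v,v'}$ (your $A_{v,v'}(J)$), of size $q$ when a fixed label is involved and of size $|\Gamma|$ (namely the set $(u'-u)-\Gamma$) when both labels lie in blocks.
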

\begin{proof}
	By Definition~\ref{def : difference complex}, the facet $J$ lies in $\LL_h$ if and only if for every pair $(v,v')\in M$ the labels $c$ of $v$ and $c'$ of $v'$ in $J$ satisfy $(c,c')\in R_{\Gamma+h_{v,v'}}$.
	Thus, this is one condition for each pair, and the condition for the pair $(v,v')$ depends on the family $h$ only through its entry $h_{v,v'}$.
	For each pair $(v,v')\in M$, with the labels $c$ of $v$ and $c'$ of $v'$ in $J$, we define
	\[
		\Omega_{v,v'}:=\big\{t\in\FF_p^{\,\alpha} : (c,c')\in R_{\Gamma+t}\big\},
	\]
	the set of those translations $t$ of $\Gamma$ for which the pair $(v,v')$, labelled as in $J$, satisfies the constraint of the translated set $\Gamma+t$.
	The set $\Omega_{v,v'}$ depends on $J$ through the two labels $c$ and $c'$ only.
	With this notation, the condition for the pair $(v,v')$ in the family $h$ holds exactly when $h_{v,v'}\in \Omega_{v,v'}$, so
	\[
		J\in\LL_h\quad\Longleftrightarrow\quad h_{v,v'}\in \Omega_{v,v'}\ \text{ for every }(v,v')\in M .
	\]
	In other words, the families $h=(h_{v,v'} : (v,v')\in M)$ with $J\in\LL_h$ are exactly the elements of the product $\prod_{(v,v')\in M}\Omega_{v,v'}\subseteq(\FF_p^{\,\alpha})^M$, whose number is $\prod_{(v,v')\in M}|\Omega_{v,v'}|$.

	\smallskip
	It remains to determine $|\Omega_{v,v'}|$.
	If $c$ or $c'$ is a fixed label, then $(c,c')\in R_{\Gamma'}$ for every $\Gamma'$, so the condition holds for every value of $h_{v,v'}$ and so $|\Omega_{v,v'}|=q$.
	If $c=(u,\iota)$ and $c'=(u',\iota')$ both lie in blocks, then $(c,c')\in R_{\Gamma+h_{v,v'}}$ means $u'-u\in\Gamma+h_{v,v'}$, that is, $u'-u=g+h_{v,v'}$ for some $g\in\Gamma$, or equivalently $h_{v,v'}\in(u'-u)-\Gamma$.
	The set $(u'-u)-\Gamma=\{(u'-u)-g : g\in\Gamma\}$ has exactly $|\Gamma|$ elements, and so $|\Omega_{v,v'}|=|\Gamma|$.
	The second case occurs for exactly $X_M(J)$ pairs and the first case for the remaining $|M|-X_M(J)$ pairs, which gives the product $|\Gamma|^{X_M(J)}\cdot q^{|M|-X_M(J)}$.
\end{proof}

\medskip
\begin{lemma}
	\label{lem : number of free pairs}
	Let $r=kq+s$ be the fixed decomposition of this section, so that $\Lambda$ has the $s$ fixed labels $\infty_1,\dots,\infty_s$.
	Let $M$ be an oriented matching on the vertex set of the simplex $\Delta_N$ with $\lfloor\frac{N+1}2\rfloor$ pairs, and let $J$ be a facet of $\K$ in which each of the labels $\infty_1,\dots,\infty_s$ may appear at most $d+1$ times.
	Then
	\[
		X_M(J)\ \geq\ \frac{(r-1-2s)(d+1)}{2}.
	\]
\end{lemma}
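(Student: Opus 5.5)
This is a counting argument on the pairs of $M$. Write $m:=\lfloor\frac{N+1}{2}\rfloor=|M|$. Let $Z$ be the set of vertices of $\Delta_N$ whose label in $J$ is a fixed label $\infty_\ell$. By hypothesis each fixed label is used at most $d+1$ times, so
\[
	|Z|\ \leq\ s(d+1).
\]
A pair $(v,v')\in M$ fails to contribute to $X_M(J)$ exactly when at least one of $v,v'$ lies in $Z$. The pairs of $M$ are disjoint, so each vertex of $Z$ spoils at most one pair. Hence at most $|Z|$ pairs are spoiled, and
\[
	X_M(J)\ \geq\ m-|Z|\ \geq\ \Big\lfloor\frac{N+1}{2}\Big\rfloor-s(d+1).
\]

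Next compare this with the target $\frac{(r-1-2s)(d+1)}{2}=\frac N2-s(d+1)$. Since $\lfloor\frac{N+1}{2}\rfloor\geq\frac N2$ (with equality if $N$ is even, and $\frac{N+1}{2}$ if $N$ is odd), we get $m-s(d+1)\geq\frac N2-s(d+1)$. That is exactly the claimed bound. If the right-hand side is negative the statement is trivial anyway, because $X_M(J)\geq0$.

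I expect no real obstacle. The only points to check are that one vertex of $Z$ can kill at most one pair, which follows from the disjointness of the pairs of an oriented matching, and the parity bookkeeping $\lfloor\frac{N+1}2\rfloor\geq\frac N2$. The bound is sharp exactly when every fixed-label vertex is matched, each fixed label is used $d+1$ times, and no pair has both ends in $Z$, as in Figure~\ref{fig : X}.
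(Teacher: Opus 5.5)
Your proof is correct and is essentially the paper's argument. The paper counts the block-labelled vertices instead of the pairs: at most one is unmatched, at most $|Z|$ lie in pairs meeting $Z$, and $2X_M(J)$ lie in the remaining pairs. Counting the pairs of $M$ directly, as you do, gives the slightly sharper intermediate bound $X_M(J)\geq\lfloor\frac{N+1}{2}\rfloor-|Z|$.

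One side remark is imprecise: your closing sharpness claim also needs $N$ to be even. Otherwise $\frac{(r-1-2s)(d+1)}{2}=\frac N2-s(d+1)$ is a half-integer and cannot equal $X_M(J)$.
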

\begin{proof}
	Let $s_0$ be the number of vertices of $\Delta_N$ whose label in $J$ is one of $\infty_1,\dots,\infty_s$.
	Since each of these $s$ labels is used at most $d+1$ times, $s_0\leq s(d+1)$, and the other $N+1-s_0$ vertices have labels in blocks.
	
	Let $B$ be the set of these $N+1-s_0$ vertices with labels in blocks.
	We sort the vertices of $B$ according to their position with respect to the matching $M$.
	Since $M$ has $\lfloor\frac{N+1}2\rfloor$ pairs, at most one vertex of $\Delta_N$, and hence at most one vertex of $B$, is not covered by $M$.
	A vertex of $B$ covered by $M$ lies in a pair $(v,v')$ whose other vertex either has a fixed label or lies in $B$ as well. We say that the pair is of \textit{the first kind} in the former case and of \textit{the second kind} in the latter.
	
	Every pair of the first kind contains a vertex with a fixed label, and it contains exactly one vertex of $B$, namely its other vertex.
	Since $M$ is a matching, every vertex of $\Delta_N$ belongs to at most one pair of $M$.

	Thus the number of pairs of the first kind is at most the number $s_0$ of such vertices.
	
	The pairs of the second kind are those in which both labels lie in blocks, so by definition there are $X_M(J)$ of them, and together they contain $2X_M(J)$ vertices of $B$.
	Counting the vertices of $B$ in these three groups gives
	\[
		N+1-s_0\ =\ |B|\ \leq\ 1+s_0+2X_M(J),
	\]
	and therefore
	\[
		2X_M(J)\ \geq\ N-2s_0\ \geq\ (d+1)(r-1)-2s(d+1)\ =\ (r-1-2s)(d+1),
	\]
	where the second inequality is $s_0\leq s(d+1)$.
\end{proof}

%----------------------------------------------------------------------------------------------
\medskip
\subsection{Proof of Theorem~\ref{th : main}}
%----------------------------------------------------------------------------------------------

The proof is a double counting of pairs composed of a translated difference complex and a Tverberg face of $a$ it contains.
Since the claimed lower bound does not depend on $a$, and since the affine maps in general position are dense in the space of all affine maps $\Delta_N\longrightarrow\R^d$, by the discussion of general position in Section~\ref{sec : introduction} it suffices to prove the bound for maps in general position, and so we may assume that $a$ is in general position.
Let $\Gamma:=\{0,e_1,\dots,e_\alpha\}\subseteq\FF_p^{\,\alpha}$, and let $M$ be an oriented matching on the vertex set of $\Delta_N$ with $\lfloor(N+1)/2\rfloor$ pairs.
We consider the $q^{|M|}$ complexes $\LL_h$ where $h\in(\FF_p^{\,\alpha})^M$, as in Section~\ref{sec : counting}.
The differences of every set $\Gamma+h_{v,v'}$ are the differences of $\Gamma$, and they generate $\FF_p^{\,\alpha}$, since $e_i-0=e_i$.
By Proposition~\ref{prop : difference complex} every $\LL_h$ is $G$-invariant and satisfies the assumption of Theorem~\ref{th : fixed sphere}, and so it contains a Tverberg face of $a$ by Theorem~\ref{th : fixed sphere}.

\medskip
Denote by $\mathcal T$ the set of Tverberg faces of $a$.
According to  Lemma~\ref{lem : free relabelling}(1) we have that $|\mathcal T|=r!\cdot\Tv(a)$.
We count the set $P$ of pairs $(h,J)$ with $h\in(\FF_p^{\,\alpha})^M$, $J\in\mathcal T$ and $J\in\LL_h$ in two ways.

\medskip
On the one hand, every $\LL_h$ contains a Tverberg face of $a$, and with it the whole $G$-orbit of this face, because $\LL_h$ is $G$-invariant.
This orbit has $q$ elements by Lemma~\ref{lem : free relabelling}(2).
Hence $|P|\geq q\cdot q^{|M|}$.

\medskip
On the other hand, let $J\in\mathcal T$.
By Lemma~\ref{lem : free relabelling}(3), $J$ is a facet of $\K$ in which every label is used at most $d+1$ times, so $X_M(J)\geq\lceil\frac{(r-1-2s)(d+1)}2\rceil$ by Lemma~\ref{lem : number of free pairs}, as $X_M(J)$ is an integer.
Since $|\Gamma|=\alpha+1\leq q$, Lemma~\ref{lem : number of translations} shows that $J$ lies in exactly
\[
	(\alpha+1)^{X_M(J)}\,q^{|M|-X_M(J)}\ =\ q^{|M|}\Big(\frac{\alpha+1}{q}\Big)^{X_M(J)}\ \leq\ q^{|M|}\Big(\frac{\alpha+1}{q}\Big)^{\lceil\frac{(r-1-2s)(d+1)}2\rceil}
\]
of the complexes $\LL_h$.
The set $P$ is the disjoint union, over $J\in\mathcal T$, of the sets $\{(h,J) : J\in\LL_h\}$, whose sizes were just bounded, so summing over the $|\mathcal T|=r!\cdot\Tv(a)$ Tverberg faces gives
\[
	|P|\ =\ \sum_{J\in\mathcal T}\big|\{h\in(\FF_p^{\,\alpha})^M : J\in\LL_h\}\big|\ \leq\ r!\cdot\Tv(a)\cdot q^{|M|}\Big(\frac{\alpha+1}{q}\Big)^{\lceil \frac{(r-1-2s)(d+1)}2\rceil}.
\]

\medskip
Comparing the two estimates for $|P|$ we get
\[
	\Tv(a)\ \geq\ \frac{q}{r!}\,\Big(\frac{q}{\alpha+1}\Big)^{\lceil\frac{(r-1-2s)(d+1)}2\rceil}.
\]
\qed

%----------------------------------------------------------------------------------------------
\medskip
\subsection{Proof of Corollary~\ref{cor : every r}}
%----------------------------------------------------------------------------------------------
Apply Theorem~\ref{th : main} with $q=b(r)$, $k=\frac{r}{b(r)}$ and $s=0$.
Then $(r-1-2s)(d+1)=N$.
\qed

%----------------------------------------------------------------------------------------------
\medskip
\subsection{Proof of Corollary~\ref{cor : rate}}
%----------------------------------------------------------------------------------------------
Let $r=kq_r+s$ with $q_r=p^\alpha$ be a decomposition at which the maximum $\beta_r$ is attained, and let $x:=\frac{(r-1-2s)(d+1)}2$.
Since $\frac{q_r}{\alpha+1}\geq1$, we have that 
\[
\Big(\frac{q_r}{\alpha+1}\Big)^{\lceil x\rceil}\geq\Big(\frac{q_r}{\alpha+1}\Big)^{x}=\beta_r^{\,d+1}.
\]
Thus, from Theorem~\ref{th : main} it follows that
\[
\Tv(a)\geq\frac{q_r}{r!}\cdot \beta_r^{\,d+1},
\] 
for every affine map $a\colon\Delta_N\longrightarrow\R^d$.
Consequently, $\Tv_r(d)^{\frac1{d}}\geq \big(\frac{q_r}{r!}\big)^{\frac1{d}}\,\beta_r^{\frac{d+1}{d}}$, and the right side tends to $\beta_r$ as $d\to\infty$.
This gives the lower bound for the $\liminf$.

\medskip
The upper bound for the $\limsup$ follows from $\Tv_r(d)\leq((r-1)!)^d$, which holds because the $(d+1)$-clouds configuration of Section~\ref{sec : introduction} is an affine map $\Delta_N\longrightarrow\R^d$ with exactly $((r-1)!)^d$ unordered Tverberg partitions.

\medskip
Finally, $\beta_r>1$.
Let $b(r)=p^\alpha$.
As in the proof of Corollary~\ref{cor : matching}, $b(r)\geq3$.
If $p\geq3$, then $p^\alpha\geq3^\alpha>\alpha+1$.
On the other hand, if $p=2$, then $\alpha\geq2$ and $2^\alpha>\alpha+1$.
So the decomposition $r=\frac{r}{b(r)}\cdot b(r)+0$ gives $\beta_r\geq 
\big(\frac{b(r)}{\alpha+1}\big)^{(r-1)/2}>1$.
\qed

%----------------------------------------------------------------------------------------------
\medskip
\subsection{Proof of Theorem~\ref{th : averaged}}
\label{subsec : proof averaged}
%----------------------------------------------------------------------------------------------
As in the proof of Theorem~\ref{th : main} we may assume that $a$ is in general position, and we use $\Gamma=\{0,e_1,\dots,e_\alpha\}$, the set $\mathcal T$ of Tverberg faces of $a$, and the complexes $\LL_h$, now for every oriented matching $M$ in the set $\mathcal M$ of all oriented matchings on the vertex set of $\Delta_N$ with $\lfloor\frac{N+1}2\rfloor$ pairs.
For $M\in\mathcal M$ let $P_M$ be the set $P$ of the proof of Theorem~\ref{th : main}, instead constructed from the oriented matching $M$.

\medskip
Every permutation $\sigma$ of $[k]$ and every permutation $\tau$ of $\{\infty_1,\dots,\infty_s\}$ give the permutation of $\Lambda$ which maps $(u,\iota)$ to $(u,\sigma(\iota))$ and $\infty_\ell$ to $\tau(\infty_\ell)$, and it commutes with the action of $G$.
Since the relations $R_{\Gamma'}$ do not involve block indices and treat all fixed labels the same, every complex $\LL_h$ is invariant under the group of order $q\,k!\,s!$ generated by $G$ and these permutations.
This group acts freely on $\mathcal T$, because a relabelling which fixes a Tverberg face fixes each of its faces, which are non-empty and pairwise disjoint, and therefore fixes every label.
Hence every $\LL_h$ contains at least $q\,k!\,s!$ Tverberg faces, and $|P_M|\geq q\,k!\,s!\,q^{|M|}$ for every $M\in\mathcal M$.
On the other hand, by Lemma~\ref{lem : number of translations} a Tverberg face $J$ lies in exactly $q^{|M|}\big(\frac{\alpha+1}q\big)^{X_M(J)}$ of the complexes $\LL_h$ built from $M$.
Summing over $\mathcal M$ we obtain
\begin{equation}\label{eq : averaged count}
	|\mathcal M|\,q\,k!\,s!\ \leq\ \sum_{J\in\mathcal T}\ \sum_{M\in\mathcal M}\Big(\frac{\alpha+1}q\Big)^{X_M(J)}.
\end{equation}

\medskip
For $J\in\mathcal T$ let $F_J$ be the set of vertices of $\Delta_N$ whose labels in $J$ are fixed labels.
Then $X_M(J)$ is the number of pairs of $M$ disjoint from $F_J$. It does not increase when $F_J$ is enlarged, and the uniform measure on $\mathcal M$ is invariant under permutations of the vertices of $\Delta_N$.
So the inner sum in~\eqref{eq : averaged count} depends only on $|F_J|$, and it does not decrease as $|F_J|$ grows.
By Lemma~\ref{lem : free relabelling}(3) we have $|F_J|\leq s(d+1)\leq N+1$, where the second inequality follows from $s\leq r-2$.
Let $F$ be a set of $s(d+1)$ vertices, let $X_M$ be the number of pairs of $M$ disjoint from $F$, and set $b:=N+1-s(d+1)$.
Then~\eqref{eq : averaged count} and $|\mathcal T|=r!\cdot\Tv(a)$ give
\[
	\Tv(a)\ \geq\ \frac{q\,k!\,s!}{r!}\,\Big(\frac{1}{|\mathcal M|}\sum_{M\in\mathcal M}\Big(\frac{\alpha+1}q\Big)^{X_M}\Big)^{-1}.
\]

\medskip
It remains to compute this average.
Every matching on the vertex set of $\Delta_N$ with $|M|$ pairs has exactly $2^{|M|}$ orientations, and $X_M$ depends only on the underlying matching, so the average over $\mathcal M$ equals the average over the set $\mathcal M'$ of all matchings with $|M|$ pairs, without orientation.
For integers $x,w,y\geq 0$ let $\mathcal{M}'_{x,w,y}\subseteq\mathcal{M}'$ be the set of those matchings which have exactly $x$ pairs disjoint from $F$, exactly $w$ pairs contained in $F$ and $y$ pairs meeting both $F$ and its complement.
These sets partition $\mathcal M'$, and $X_M=x$ for every $M\in\mathcal{M}'_{x,w,y}$, so
\begin{equation}
	\label{eq : average as sum}
	\frac{1}{|\mathcal M|}\sum_{M\in\mathcal{M}}\Big(\frac{\alpha+1}q\Big)^{X_M}\ =\ \sum_{x,w,y\geq0}\frac{|\mathcal M'_{x,w,y}|}{|\mathcal M'|}\Big(\frac{\alpha+1}q\Big)^{x}.
\end{equation}
For $M\in\mathcal M'_{x,w,y}$ the numbers $b-2x-y$ and $N+1-b-2w-y$ of unmatched vertices outside $F$ and inside $F$ respectively sum up to $N+1-2|M|\in\{0,1\}$, since $M$ is assumed to have $\lfloor\frac{N+1}2\rfloor$ pairs. Hence $\mathcal M'_{x,w,y}\neq\emptyset$ implies $x+w+y=|M|$ and $2x+y\in\{b,\,b-(N+1-2|M|)\}$, which is either $\{b\}$ or $\{b,b-1\}$.

Conversely, let $x,w,y\geq0$ be integers with $x+w+y=|M|$ and $2x+y\in\{b,\,b-(N+1-2|M|)\}$.
Then $b-2x-y\in\{0,\,N+1-2|M|\}$, and $N+1-b-2w-y=(N+1-2|M|)-(b-2x-y)\in\{N+1-2|M|,\,0\}$, so each of the two integers $b-2x-y$ and $N+1-b-2w-y$ is $0$ or $1$.
For such $x,w,y$ the set $\mathcal M'_{x,w,y}$ is not empty, and a matching in $\mathcal M'_{x,w,y}$ is obtained by
\begin{compactitem}[\quad --]
  \item choosing, among the $b$ vertices outside $F$, the $2x$ vertices of the pairs disjoint from $F$ together with their partition into $x$ pairs, the $y$ vertices of the pairs meeting both $F$ and its complement, and, if $b-2x-y=1$, the unmatched vertex outside $F$,
  \item choosing, among the $N+1-b$ vertices of $F$, the $2w$ vertices of the pairs contained in $F$ together with their partition into $w$ pairs, the $y$ vertices of the pairs meeting both sides, and, if $N+1-b-2w-y=1$, the unmatched vertex inside $F$,
  \item matching the $y$ chosen vertices outside $F$ with the $y$ chosen vertices of $F$ in one of $y!$ ways.
\end{compactitem}
Hence
\[
	|\mathcal M'_{x,w,y}|\ =\ \frac{b!}{2^{x}\,x!\,y!}\cdot\frac{(N+1-b)!}{2^{w}\,w!\,y!}\cdot y!\ =\ \frac{b!\,(N+1-b)!}{2^{x+w}\,x!\,w!\,y!},
\]
and together with $|\mathcal M'|=\frac{(N+1)!}{2^{|M|}\,|M|!}$ this gives
\[
	\frac{|\mathcal M'_{x,w,y}|}{|\mathcal M'|}\ =\ \binom{N+1}{b}^{-1}\frac{|M|!}{x!\,w!\,y!}\,2^{y}.
\]
On the other hand, by the multinomial theorem,
\[
	\Big(1+2u+\frac{\alpha+1}q\,u^2\Big)^{|M|}\ =\ \sum_{x+w+y=|M|}\frac{|M|!}{x!\,w!\,y!}\,2^{y}\Big(\frac{\alpha+1}q\Big)^{x}u^{2x+y},
\]
and $(1+u)^{N+1-2|M|}=1+(N+1-2|M|)\,u$.
Consequently, the coefficient of $u^b$ in
\[
  (1+u)^{N+1-2|M|}\big(1+2u+\frac{\alpha+1}qu^2\big)^{|M|},
\]
is
\[
  \sum_{\substack{x+w+y=|M|\\ 2x+y\in\{b,\,b-(N+1-2|M|)\}}}\frac{|M|!}{x!\,w!\,y!}\,2^{y}\Big(\frac{\alpha+1}q\Big)^{x},
\]
where the sum runs over all integers $x,w,y\geq0$ satisfying the two conditions under the summation sign, that is, as shown above, over exactly those triples $(x,w,y)$ with $\mathcal M'_{x,w,y}\neq\emptyset$.
Thus the right side of~\eqref{eq : average as sum} is $\binom{N+1}{b}^{-1}$ times this coefficient, and since $\binom{N+1}{b}=\binom{N+1}{s(d+1)}$, this proves Theorem~\ref{th : averaged}.

\medskip
If $F$ is the set of vertices with fixed labels of a facet in which each fixed label is used exactly $d+1$ times, then Lemmas~\ref{lem : number of translations} and~\ref{lem : number of free pairs} give $X_M\geq\lceil\frac{(r-1-2s)(d+1)}2\rceil$ for every $M$.
So the average above is at most $\big(\frac{\alpha+1}q\big)^{\lceil\frac{(r-1-2s)(d+1)}2\rceil}$, and the bound of Theorem~\ref{th : averaged} is at least the bound of Theorem~\ref{th : main}.

%----------------------------------------------------------------------------------------------
\medskip
\subsection{Proof of Corollary~\ref{cor : averaged rate}}
%----------------------------------------------------------------------------------------------
By Corollary~\ref{cor : rate} it suffices to show $\liminf_{d\to\infty}\Tv_r(d)^{\frac{1}{d}}\geq\beta'_r$.
Fix a decomposition $r=kq+s$ with $s\geq1$ and fix $u>0$.
The polynomial in Theorem~\ref{th : averaged} has non-negative coefficients, so its coefficient of $u^{N+1-s(d+1)}$ is at most its value at $u$ divided by $u^{N+1-s(d+1)}$, and Theorem~\ref{th : averaged} gives
\[
	\Tv_r(d)\ \geq\ \frac{q\,k!\,s!}{r!}\,\binom{N+1}{s(d+1)}\,\frac{u^{N+1-s(d+1)}}{(1+u)^{N+1-2\lfloor\frac{N+1}2\rfloor}\big(1+2u+\frac{\alpha+1}q\,u^2\big)^{\lfloor\frac{N+1}2\rfloor}} .
\]
Since $N+1=(r-1)(d+1)+1$, Stirling's formula gives 
\[
\binom{N+1}{s(d+1)}^{\frac{1}{d}} \ \to \ (r-1)^{r-1}s^{-s}(r-1-s)^{-(r-1-s)}
\] 
as $d\to\infty$, and the $d$-th roots of the other factors converge to $1$, $u^{r-1-s}$, $1$ and $\big(1+2u+\frac{\alpha+1}q\,u^2\big)^{-\frac{r-1}2}$ respectively.
So $\liminf_{d\to\infty}\Tv_r(d)^{\frac{1}{d}}$ is at least the expression under the supremum in the definition of $\beta'_r$, for every $u>0$ and every decomposition with $s\geq1$, and therefore at least $\beta'_r$.

\medskip
The expression under the supremum tends to $0$ as $u\to0$ and as $u\to\infty$, because $1\leq s\leq r-2$.
Its logarithmic derivative is $\frac{r-1-s}u-\frac{(r-1)(1+\frac{\alpha+1}qu)}{1+2u+\frac{\alpha+1}qu^2}$, which vanishes if and only if $\frac{\alpha+1}{q}\,s\,u^2-(r-1-2s)\,u-(r-1-s)=0$.
This equation has exactly one positive root, since the product of its roots is negative, and the supremum is attained there.
\qed

 \providecommand{\href}[2]{#2}

\end{document}